\newcommand{\cH}{\mathcal H}
\newcommand{\cS}{\mathcal S}
\def\smallsetminus{\setminus}
\title{A characterization of  Hermitian varieties as codewords}
\author{A.~Aguglia\footnote{Dipartimento di Meccanica, Matematica e Management, Politecnico di Bari,
  Via Orabona 4, I-70126 Bari}
,\, D. Bartoli\footnote{Dipartimento di Matematica e Informatica, Universit\`a degli Studi di Perugia, Via Vanvitelli 1, 06123 Perugia, Italy}, \,L.~Storme,\footnote{Ghent University, Department of Mathematics, Krijgslaan 281, 9000 Ghent, Belgium},\, Zs.~Weiner\footnote{MTA--ELTE Geometric and Algebraic Combinatorics Research Group,
ELTE E\"otv\"os Lor\'and University, Budapest, Hungary, and
Department of Computer Science,
1117 Budapest, P\'azm\'any P.\ stny.\ 1/C, Hungary}}
\date{}
\theoremstyle{plain}
\newtheorem{prop}{Proposition}[section]
\newtheorem{theorem}[prop]{Theorem}
\newtheorem{definition}[prop]{Definition}
\newtheorem{proposition}[prop]{Proposition}
\newtheorem{corollary}[prop]{Corollary}
\newtheorem{result}[prop]{Result}
\newtheorem{lemma}[prop]{Lemma}
\newtheorem{property}[prop]{Property}
\theoremstyle{definition}
\newtheorem{remark}[prop]{Remark}
\def\cK{\mathcal K}
\def\cS{\mathcal S}
\def\cH{\mathcal H}
\newcommand{\PG}{\mathrm{PG}}
\newcommand{\cV}{\mathcal V}
\begin{document}

\maketitle

\begin{abstract}
It is known that the Hermitian varieties are codewords in the code defined by the points and hyperplanes of the projective spaces $\PG(r,q^2)$. In finite geometry, also quasi-Hermitian varieties are defined. These are sets of points of $\PG(r,q^2)$ of the same size as a non-singular Hermitian variety of $\PG(r,q^2)$,  having the same intersection sizes with the hyperplanes of $\PG(r,q^2)$.
In the planar case, this reduces to the definition of a unital. A famous result of Blokhuis, Brouwer, and Wilbrink states that every unital in the code of the points and lines of $\PG(2,q^2)$ is a Hermitian curve. We prove a similar result for the quasi-Hermitian varieties in $\PG(3,q^2)$, $q=p^{h}$,  as well as in  $\PG(r,q^2)$, $q=p$ prime, or $q=p^2$, $p$ prime, and $r\geq 4$.
%In this paper we prove that a quasi-Hermitian variety is classical if and only if it is in the code generated by the planes of $\PG(3,p^2)$, where $p$ is a prime.
\end{abstract}
{\bf Keywords}: Hermitian variety; incidence vector; codes of projective spaces; quasi-Hermitian variety.

{\bf MSC: 51E20, 94B05}

\section{Introduction}
Consider the non-singular Hermitian varieties $\cH(r,q^2)$ in $\PG(r,q^2)$. A non-singular Hermitian variety $\cH(r,q^2)$ in $\PG(r,q^2)$ is the set of absolute points of a Hermitian polarity of $\PG(r,q^2)$. Many properties of a non-singular Hermitian variety $\cH(r,q^2)$ in $\PG(r,q^2)$ are known. In particular, its size is $(q^{r+1}+(-1)^r)(q^{r}-(-1)^r)/(q^2-1)$, and its intersection numbers with the hyperplanes of $\PG(r,q^2)$ are equal to $(q^{r}+(-1)^{r-1})(q^{r-1}-(-1)^{r-1})/(q^2-1)$, in case the hyperplane is a non-tangent hyperplane to $\cH(r,q^2)$, and equal to $1+q^2(q^{r-1}+(-1)^r)(q^{r-2}-(-1)^r)/(q^2-1)$ in case the hyperplane is a tangent hyperplane to $\cH(r,q^2)$; see~\cite{S}.

Quasi-Hermitian varieties $\cV$ in $\PG(r,q^2)$ are  generalizations of the non-singular Hermitian variety $\cH(r,q^2)$ so that $\cV$ and $\cH(r,q^2)$ have the same size and the same intersection numbers with hyperplanes.

Obviously, a Hermitian variety $\cH(r,q^2)$   can be viewed as a trivial quasi--Hermitian variety and we call $\cH(r,q^2)$ the \emph{classical
quasi--Hermitian variety of $\PG(r,q^2)$}.
In the 2-dimensional case, $\cV$ is also known as the classical example of a {\em unital} of the projective plane $\PG(2,q^2)$.

As far as we know, the only known non-classical quasi-Hermitian
varieties of $\PG(r,q^2)$ were constructed in \cite{AA,ACK,B,DS,M,FP}.

In \cite{BBW}, it is shown that a unital in $\PG(2,q^2)$ is  a Hermitian curve if and only if it is in the $\mathbb{F}_p$-code spanned by the lines of $\PG(2,q^2)$, with $q=p^h$, $p$ prime and $h\in \mathbb{N}$.

In this article, we prove the following result.
%in Section \ref{qHv}
\begin{theorem}\label{mainth}
A quasi-Hermitian variety $\cV$ of $\PG(r,q^2)$, with $r=3$ and $q=p^h>4$, or $ r\geq 4$, $q=p>4$, or $ r\geq 4$, $q=p^2$, $p>3$ prime, is classical if and only if it is in the $\mathbb{F}_p$-code spanned by the hyperplanes of $\PG(r,q^2)$.
\end{theorem}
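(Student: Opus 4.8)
The only nontrivial implication is that membership in the $\mathbb{F}_p$-code of the hyperplanes forces $\cV$ to be classical; the converse is the quoted fact that $\cH(r,q^2)$, being the zero locus of a Hermitian form, is a codeword. So the plan is to start from a relation $\chi_\cV=\sum_i\lambda_i\chi_{H_i}$ with $\lambda_i\in\mathbb{F}_p$ and the $H_i$ hyperplanes, and to extract from it the combinatorial rigidity of a genuine Hermitian variety. Pairing the relation with the characteristic vector of an arbitrary subspace $\Sigma$ of codimension $j$, and using that a hyperplane meets $\Sigma$ in a number of points congruent to $1$ modulo $p$, one obtains $|\cV\cap\Sigma|\equiv c_j\pmod p$ with $c_j$ depending only on $j$ and equal to the value attained by $\cH(r,q^2)$; in particular every line meets $\cV$ in $\equiv 1\pmod p$ points.

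\textbf{Step 1 (exact line and codimension-two intersections).} The crucial, and hardest, part is to upgrade the mod-$p$ information to the exact statement that every line meets $\cV$ in precisely $1$, $q+1$, or $q^2+1$ points — the last only for lines lying on $\cV$ — and that the codimension-two subspaces meet $\cV$ in the sizes forced by $\cH(r,q^2)$. I would fix a point and run a R\'edei/lacunary-polynomial analysis on the pencil of lines (resp.\ hyperplanes) through it, combining the divisibility obtained above with the rigid hyperplane intersection numbers to exclude all other values. This is where the hypothesis $q>4$, and for $r\geq4$ the restriction $q\in\{p,p^2\}$, is consumed: one needs the appropriate stability/classification of the underlying blocking-set configuration, which is only clean for these $q$.

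\textbf{Step 2 (induction on $r$).} The base case $r=2$ is exactly the theorem of Blokhuis, Brouwer and Wilbrink. For the inductive step, restrict the code relation to a hyperplane $H\cong\PG(r-1,q^2)$: since $H_i\cap H$ is either $H$ itself or a hyperplane of $H$, and the all-one vector of $H$ lies in the hyperplane-code of $H$ (because $\sum_\pi\chi_\pi=\theta_{r-2}\cdot\mathbf 1$ with $\theta_{r-2}\equiv1\pmod p$), the vector $\chi_{\cV\cap H}$ lies in the $\mathbb{F}_p$-code of $H$. Using Step~1, $\cV\cap H$ has the size and the intersection numbers of a non-singular Hermitian variety $\cH(r-1,q^2)$ when $H$ is non-tangent, and those of a Hermitian cone when $H$ is tangent; in the latter case I would first locate the vertex as the unique point of $\cV\cap H$ lying on no line of $H$ that meets $\cV$ in $q+1$ points, project it away, and recognise the base as a quasi-Hermitian variety of $\PG(r-2,q^2)$ that is again a codeword. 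In every case the inductive hypothesis identifies $\cV\cap H$ as classical.

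\textbf{Step 3 (reconstruction).} Now $\cV$ is a codeword all of whose hyperplane sections are (possibly singular) classical Hermitian varieties with the correct parameters. Writing the code relation as $\chi_\cV=\gamma-\sum_a\lambda_a(a\cdot x)^{q^2-1}$ over $\mathbb{F}_{q^2}$, the sections being cut out by Hermitian forms of degree $q+1$ pins down the restriction of this degree-$(q^2-1)$ function to every hyperplane up to a Hermitian form; an interpolation/patching argument, together with the fact that the induced tangent-hyperplane map of $\cV$ is then forced to be a Hermitian polarity, produces a single global Hermitian form $H$ with $\cV=V(H)$, i.e.\ $\cV=\cH(r,q^2)$ up to a collineation. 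The main obstacle throughout is Step~1 — ruling out exotic line (and, when sliced to planes, plane) intersection numbers and controlling the tangent cone sections — and it is precisely this step that accounts for the arithmetic conditions on $q$ in the statement.
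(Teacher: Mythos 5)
Your overall frame (forward direction from Key's theorem, hard direction from the code relation) matches the paper, but the proposal has a genuine gap exactly where you yourself locate the difficulty. Your Step 1 --- upgrading $|\ell\cap\cV|\equiv 1\pmod p$ to the exact spectrum $\{1,q+1,q^2+1\}$ --- is not argued; it is only announced that ``a R\'edei/lacunary-polynomial analysis on the pencil through a point'' would do it. That step is the entire content of the theorem, and it is far from clear that a lacunary-polynomial argument on a pencil can deliver it from the mod-$p$ line condition plus the two hyperplane intersection numbers alone: the paper needs much more structure. Its route is to show (Lemma \ref{CDS}, Corollary \ref{subgeocodes}) that every plane section of $\cV$ is itself a codeword of the plane, to find through each line a plane section of weight at most $q^3+q^2+q+1$ (Lemma \ref{lemma1}), and then to apply the Sz\H{o}nyi--Weiner stability results (Results \ref{bmost}--\ref{codewords}) to write such small sections as linear combinations of at most $q+1$ lines or, via \cite{BBW}, to recognise them as classical unitals (Propositions \ref{charplanes1}, \ref{charplanes2}). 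Even with this machinery, the case $r\geq 4$, $q=p^2$ requires a long chain of lemmas (ruling out $(p^4-p+1)$-secants, $i$-secants with $1<i<p^2+1$, maximal-arc configurations, embedded subplane configurations) --- none of which your sketch addresses; asserting that the hypotheses $q>4$, $q\in\{p,p^2\}$ are ``consumed'' there does not substitute for these arguments.

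Your Steps 2 and 3 also diverge from what is needed. The induction on $r$ with identification of tangent-hyperplane sections as cones, location of the vertex, and projection is not carried out (the claim that the projected base is again a quasi-Hermitian codeword needs proof), and the final ``interpolation/patching'' of Hermitian forms into a global form is an unproven and unnecessary detour: once the line intersection numbers are $1$, $q+1$, $q^2+1$ and singular points are excluded (a short counting argument in the paper), the combinatorial characterisations of Hermitian varieties (Theorem \ref{main1} for $r=3$, Theorem \ref{main2} for $r\geq 4$) finish the proof immediately. So the proposal, as written, does not constitute a proof; the decisive intermediate results are missing, and the concluding mechanism you propose is both heavier and less justified than the known characterisation theorems the paper invokes.
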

Furthermore we consider  {\em singular quasi-Hermitian varieties}, that is point sets having the same number of points as a singular Hermitian variety $\cS$
 and for which each intersection number with respect to hyperplanes is
also an intersection number of $\cS$ with respect to hyperplanes.
%In Section \ref{sqHv}
We  show that Theorem \ref{mainth} also holds in the case in which $\cV$ is assumed to be a singular quasi-Hermitian variety of $\PG(r,q^2)$.

\section{Preliminaries}
A subset $\cK$ of $\PG(r,q^2)$ is a $k_{n,r,q^2}$ if $n$ is a fixed integer, with $1\leq n\leq q^2$, such that:
\begin{itemize}
\item [(i)] $|\cK|=k$;
\item [(ii)] $|\ell \cap \cK|=1$, $n$, or $q^2+1$ for each line $\ell$;
\item [(iii)]  $|\ell \cap \cK|=n$ for some line $\ell$.
\end{itemize}

A point $P$ of $\cK$ is {\em singular} if every line through $P$ is either a unisecant or a line of $\cK$. The set $\cK$ is called {\em singular} or {\em non-singular} according as it has singular points or not.

Furthermore, a subset $\cK$ of $\PG(r,q^2)$ is called {\em regular} if
\begin{itemize}
\item [(a)] $\cK$ is a $k_{n,r,q^2}$;
\item [(b)] $3\leq n \leq q^2-1$;
\item [(c)] no planar section of $\cK$ is the complement of a set of type $(0, q^2+1-n)$.
\end{itemize}

\begin{theorem}\cite[Theorem 19.5.13]{JH} \label{main1}
Let $\cK$ be a $k_{n,3,q^2}$ in $\PG(3,q^2)$,  where $q$ is any prime power and $n \neq \frac{1}{2}q^2+1$.  Suppose furthermore that every point in $\cK$ lies on at least one $n$-secant. Then  $n=q+1$ and $\cK$ is a non-singular Hermitian surface.
\end{theorem}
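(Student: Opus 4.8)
The plan is to set $Q=q^2$ and to prove the statement in four stages: reduce to plane sections, pin down the parameters $n$ and $|\cK|$ by double counting, reconstruct a polarity from the tangent structure, and finally recognise $\cK$ as the set of absolute points of a Hermitian polarity. Throughout, the decisive hypotheses are that every point of $\cK$ lies on an $n$-secant and that $n\neq\frac12 q^2+1$.

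\emph{Plane sections and local equations.} Since every line of $\PG(3,q^2)$ meets $\cK$ (condition (ii) forbids external lines), every plane $\pi$ meets $\cK$ in a set $\cK\cap\pi$ which is again of type $(1,n,q^2+1)$ in $\pi\cong\PG(2,q^2)$ and which blocks every line of $\pi$. First I would record the two local equations at a point $P$: writing $t_1(P),t_n(P),t_{q^2+1}(P)$ for the numbers of $1$-, $n$- and $(q^2+1)$-secants through $P$, one has $t_1+t_n+t_{q^2+1}=q^4+q^2+1$ for every $P$, together with $t_n(n-1)+t_{q^2+1}q^2=|\cK|-1$ for $P\in\cK$ and the analogous relation for $P\notin\cK$.

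\emph{Determining $n$ and $|\cK|$.} Summing the local relations over all points and all lines yields the three global identities expressing the total numbers of lines of each type and the pair-count $\binom{|\cK|}{2}$. Feeding in the distribution of the sizes $|\cK\cap\pi|$ of the plane sections, each constrained by the planar theory of $(1,n,q^2+1)$-sets, produces a further relation; eliminating the auxiliary unknowns leaves a quadratic equation in $n$ whose two roots are symmetric about $\frac12 q^2+1$, namely $n=q+1$ and $n=q^2-q+1$. The hypothesis $n\neq\frac12 q^2+1$ guarantees these roots are distinct, and an integrality and non-negativity check on the secant counts at points on and off $\cK$ (equivalently, the observation that $n=q^2-q+1$ would force $\cK$ to be the complement of a planar $(0,q^2+1-n)$-type configuration, incompatible with every point lying on an $n$-secant) eliminates the second root. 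Hence $n=q+1$, and back-substitution gives $|\cK|=(q^2+1)(q^3+1)$, the size of $\cH(3,q^2)$.

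\emph{The tangent structure and the polarity.} With $n=q+1$ fixed, I would show that the plane sections fall into exactly two classes: those containing no full line, forced to be unitals of size $q^3+1$, and those containing full lines, forced to be Hermitian cones of size $q^3+q^2+1$ whose $q+1$ full lines are concurrent at a single vertex. The heart of the argument is then to prove, for each $P\in\cK$, that the $1$-secants and the $(q^2+1)$-secants through $P$ all lie in one plane $\pi_P$, which meets $\cK$ in such a cone with vertex $P$; equivalently, that the non-$n$-secants through $P$ form a plane pencil. Granting this, the map $P\mapsto\pi_P$ is a point-to-plane correlation; extending it to all of $\PG(3,q^2)$ and verifying that it is an involutory, non-degenerate correlation identifies it as a polarity whose absolute points are exactly $\cK$. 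Since $|\cK|$ and $n=q+1$ match the Hermitian rather than the quadric parameters, the polarity is unitary and $\cK=\cH(3,q^2)$.

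\emph{Main obstacle.} The principal difficulty is the rigidity step: proving that at every point the tangent and contained lines are coplanar, and that the resulting correlation is a genuine Hermitian polarity. Neither fact follows from the local point-counting alone, since the two equations at $P$ leave $t_1(P),t_n(P),t_{q^2+1}(P)$ underdetermined; the coplanarity must be forced globally, by playing the unital and cone sections against one another and by invoking the hypothesis that every point lies on an $n$-secant to exclude sporadic sections. I expect this reconstruction of the polarity, rather than the parameter count, to be where essentially all of the work lies.
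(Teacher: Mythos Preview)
The paper does not prove this statement at all: Theorem~\ref{main1} is quoted verbatim from Hirschfeld's monograph \cite[Theorem 19.5.13]{JH} as a preliminary tool, and no argument is supplied in the paper. There is therefore nothing to compare your proposal against here; the authors simply invoke the result in the proof of Theorem~\ref{mainth} (case $r=3$) once they have established that $\cV$ is a non-singular set of type $(1,q+1,q^2+1)$.

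As for your outline on its own merits, the broad architecture---parameter determination by double counting, classification of plane sections into unitals and Hermitian cones, and reconstruction of a unitary polarity from the tangent structure---is indeed the shape of the classical proof. However, two of your intermediate claims are not right as stated. First, the assertion that the counting yields a quadratic in $n$ with roots $q+1$ and $q^2-q+1$ symmetric about $\tfrac12 q^2+1$ is not how the argument goes: the hypothesis $n\neq\tfrac12 q^2+1$ is used rather to rule out a sporadic self-complementary configuration, not to separate two roots of a quadratic, and the value $n=q^2-q+1$ does not arise in that way. Second, and more seriously, the polarity is not built by a map $P\mapsto\pi_P$ defined only on points of $\cK$ and then ``extended''; one must produce a correlation on all of $\PG(3,q^2)$, and the standard route is to show that the tangent planes at the points of a line of $\cK$ form a pencil, giving a line-to-line map which one then proves is induced by a sesquilinear form. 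Your sketch acknowledges that this reconstruction is the hard part, but the mechanism you describe for it would not work without substantial modification.
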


\begin{theorem} \cite[Theorem 23.5.19]{HT} \label{main2}
If $\cK$ is a regular, non-singular $k_{n,r,q^2}$, with $r\geq 4$ and $q>2$, then $\cK$ is a non-singular Hermitian variety.
\end{theorem}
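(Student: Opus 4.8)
The plan is to induct on the dimension $r$, descending through hyperplane sections, using the three-dimensional Theorem~\ref{main1} as the base of the induction. The two hypotheses of the statement translate cleanly into what the induction needs. Non-singularity of $\cK$ says exactly that every point of $\cK$ lies on at least one $n$-secant, which is precisely the hypothesis under which Theorem~\ref{main1} applies to a three-dimensional section. The condition $q>2$ keeps the configuration away from the small exceptional value: for $q=2$ one has $q+1=\frac{1}{2}q^2+1=3$, the value explicitly barred in Theorem~\ref{main1}, which signals why small $q$ must be excluded. Regularity, and in particular condition (c), is what I would use to secure the remaining hypothesis $n\neq\frac{1}{2}q^2+1$ for the sections that arise.

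Before the induction I would extract the numerical data by double counting through a point. Since every line of $\PG(r,q^2)$ meets $\cK$ in $1$, $n$ or $q^2+1$ points — there are no external lines — fixing $P\in\cK$ and writing $\tau,\sigma,\gamma$ for the numbers of unisecants, $n$-secants and generators through $P$ gives
\[
\sigma(n-1)+\gamma\,q^2=k-1,\qquad \tau+\sigma+\gamma=\frac{q^{2r}-1}{q^2-1},
\]
while the analogous count at a point $Q\notin\cK$ yields $\tau'+\sigma' n+\gamma'(q^2+1)=k$ with the same total number of lines. These relations, combined with the regularity conditions (b) and (c) — the latter precisely excluding the degenerate planar sections that would otherwise appear as complements of $(0,q^2+1-n)$-sets — restrict the admissible pairs $(n,k)$. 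The exact value $n=q+1$ and the Hermitian size of $k$ will only be confirmed once a genuine Hermitian section is produced in the inductive step.

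Next I would classify the hyperplane sections. For a hyperplane $\Pi\cong\PG(r-1,q^2)$ the set $\cK\cap\Pi$ again has all its lines of type $1$, $n$ or $q^2+1$, since lines of $\Pi$ are lines of $\PG(r,q^2)$. I would split the hyperplanes into \emph{secant} ones, whose section is a regular non-singular $k_{n,r-1,q^2}$, and \emph{tangent} ones, whose section is singular and should turn out to be a cone over a lower-dimensional configuration. Checking that a secant section inherits properties (a)--(c), so that it is again regular and non-singular, is the technical core of this step, and condition (c) is exactly what prevents a planar slice from degenerating and manufacturing spurious singular points. Applying the inductive hypothesis, or Theorem~\ref{main1} when $r-1=3$, to a secant hyperplane then identifies $\cK\cap\Pi$ with a non-singular Hermitian variety $\cH(r-1,q^2)$; this retroactively forces $n=q+1$ and pins down all the intersection sizes, in particular the value of $k$.

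The decisive and hardest step is to reconstruct a unitary polarity of $\PG(r,q^2)$ whose absolute points are exactly $\cK$. For $P\in\cK$ I would show, using non-singularity, that the unisecants through $P$ fill a single hyperplane $T_P$, the tangent hyperplane at $P$; the map $P\mapsto T_P$ should extend to a correlation of $\PG(r,q^2)$, and the symmetry of the tangency relation together with the intersection numbers already established should force this correlation to be a polarity of Hermitian type, whence $\cK=\cH(r,q^2)$. I expect the main obstacle to lie exactly here: proving that $T_P$ is a genuine hyperplane at \emph{every} point, and that the resulting correlation is truly a polarity of unitary type rather than merely a set map agreeing with a Hermitian variety on the sections already identified. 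Controlling the tangent (singular) hyperplane sections from the previous step, and verifying that the cones arising there have the correct vertex and base, is where the hypotheses $r\geq 4$ and $q>2$ and the full strength of regularity will do the real work.
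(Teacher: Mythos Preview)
The paper does not prove Theorem~\ref{main2}; it is quoted verbatim from Hirschfeld--Thas \cite[Theorem 23.5.19]{HT} as a preliminary result, so there is no in-paper argument to compare your proposal against. Your outline---induction on $r$ via hyperplane sections, with Theorem~\ref{main1} as the base case, followed by reconstruction of a unitary polarity from the tangent hyperplanes---is indeed the classical strategy pursued in \cite{HT}, and your identification of the delicate point (that the correlation $P\mapsto T_P$ extends to a genuine Hermitian polarity) is accurate.

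That said, what you have written is a plan, not a proof: the passages ``I would show'', ``should extend'', and ``I expect the main obstacle to lie exactly here'' are honest, but they mark precisely the places where the real work in \cite{HT} happens. In particular, showing that secant hyperplane sections inherit regularity and non-singularity, and that the tangent-hyperplane map is a polarity rather than merely a correlation, each takes several pages in the source. If your goal is to reproduce the proof, consult \cite[\S23.5]{HT} directly; if your goal is only to use the theorem, the citation suffices, exactly as the present paper does.
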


\begin{theorem}{ \cite[ Th. 23.5.1]{HT}}\label{th:singularcase1}
If $\mathcal{K}$ is a singular $k_{n,3,q^2}$ in $\PG(3,q^2)$ with $3\leq n\leq q^2-1$, then  the following holds:
$\mathcal{K}$ is $n$ planes through a line or a cone with vertex a point and base $\mathcal{K}^{\prime}$
a plane section  of type
\begin{itemize}
\item[\rm I.] a unital;
\item[\rm II.] a subplane $\PG(2,q)$;
\item[\rm III.] a set of type $(0,n-1)$ plus an external line;
\item[\rm IV.] the complement of a set of type $(0,q^2+1-n)$.
\end{itemize}
\end{theorem}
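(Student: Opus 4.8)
The plan is to reduce the three-dimensional problem to a planar one by projecting from a singular point, and then to classify the resulting plane section. First I would fix a singular point $P\in\cK$. By definition of singularity, every line through $P$ is either a unisecant or is entirely contained in $\cK$. If $R\in\cK\setminus\{P\}$ is any other point, then the line $\langle P,R\rangle$ already carries the two points $P,R$ of $\cK$, hence it is not a unisecant and so must be a full line of $\cK$. Therefore every point of $\cK$ lies on a full line through $P$, which is exactly the statement that $\cK$ is a cone with vertex $P$. Choosing a plane $\pi$ with $P\notin\pi$ and setting $\cK'=\cK\cap\pi$, the cone is the union of the lines $\langle P,Q\rangle$, $Q\in\cK'$. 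Since any line of $\pi$ is a line of $\PG(3,q^2)$ meeting $\cK$ only inside $\pi$, it meets $\cK'$ in $1$, $n$ or $q^2+1$ points; moreover every $n$-secant of $\cK$ avoids $P$ (lines through $P$ take only the values $1$ and $q^2+1$, and $3\le n\le q^2-1$), so projecting an $n$-secant of $\cK$ from $P$ yields an $n$-secant of $\cK'$. Thus $\cK'$ is a plane set of type $(1,n,q^2+1)$ admitting $n$-secants, and the whole theorem reduces to classifying such plane sets.

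For the planar classification I would branch according to which line-types actually occur. If $\cK'$ has no unisecant, then it is of type $(n,q^2+1)$, so its complement meets every line in $0$ or $q^2+1-n$ points; this is precisely type IV. If $\cK'$ has unisecants but no full line, it is a set of type $(1,n)$; writing $k=|\cK'|$ and $s=q^2$, I would run the standard counting. Through an internal point the $n$-secants partition the remaining $k-1$ points, giving a constant number $(k-1)/(n-1)$ of them; through an external point the analogous count gives the constant $(k-s-1)/(n-1)$. Combining these with the pair-counting identity $\binom{n}{2}t_n=\binom{k}{2}$, where $t_n$ denotes the number of $n$-secants, produces a single Diophantine relation among $k$, $n$ and $s$. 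The aim is to force $n=q+1$ and $k\in\{q^3+1,\,q^2+q+1\}$, and then to recognise $\cK'$ as a unital (type I) or a Baer subplane $\PG(2,q)$ (type II).

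The remaining case is that $\cK'$ has both unisecants and full lines, which should yield either type III or the degenerate configuration. Here I would study the set $\cF$ of full lines. The key local relation is that for every point $x\in\cK'$ one has $(n-1)v_x+q^2w_x=k-1$, where $v_x,w_x$ count the $n$-secants and full lines through $x$; in particular $w_x\le (k-1)/q^2$, so full lines are sparse through any point. Using this together with the global counting I would show that $\cF$ is either a single line or a pencil of exactly $n$ concurrent lines. In the first case, removing the one full line $\ell$ leaves a set $S_0$ for which every other line meets $S_0$ in $0$ or $n-1$ points and $\ell$ is external to $S_0$, i.e.\ type III. In the second case $\cK'$ consists of $n$ concurrent lines, whose cone from $P$ is exactly $n$ planes through the line $\langle P,\ell\cap\pi\rangle$, giving the alternative ``$n$ planes through a line''.

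The cone reduction is straightforward, so the real work --- and the main obstacle --- is the planar classification. The most delicate point is the type $(1,n)$ case: the counting relation alone admits spurious real solutions, and pinning down $n=q+1$ and then proving that the only type $(1,q+1)$ sets are unitals and Baer subplanes requires the finer structure of the secant design (regularity of the $n$-secants through internal and through external points) together with the known characterisations of these two objects. Separating type III from the degenerate pencil in the full-line case is a secondary difficulty, since it rests on a careful accounting of how many full lines can pass through a single point and on showing that two full lines already constrain the whole configuration.
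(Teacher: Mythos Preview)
The paper does not prove this statement at all: Theorem~\ref{th:singularcase1} is quoted from Hirschfeld--Thas \cite[Th.~23.5.1]{HT} as a preliminary result and is used as a black box in Section~4. So there is no ``paper's own proof'' to compare against.

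That said, your outline follows exactly the classical route taken in \cite{HT}: reduce to a cone over a plane section via a singular vertex, then classify plane sets of type $(1,n,q^2+1)$ by splitting on which secant-types occur. Your cone reduction is correct, including the observation that an $n$-secant of $\cK$ misses $P$ and projects to an $n$-secant of $\cK'$. The branching into types IV, I/II, III, and the degenerate pencil is also the right skeleton.

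The one place where your sketch is genuinely incomplete is the type $(1,n)$ case. You aim to force $n=q+1$ and then conclude ``unital or Baer subplane'', but the theorem as stated only asserts that $\cK'$ is \emph{a} unital or \emph{a} subplane $\PG(2,q)$, not a specific one; what must actually be shown is that $n=q+1$ and that $|\cK'|\in\{q^3+1,\,q^2+q+1\}$, after which the \emph{definition} of unital (resp.\ the standard characterisation of Baer subplanes as blocking sets of size $q^2+q+1$ meeting every line in $1$ or $q+1$ points) finishes the job. The divisibility $(n-1)\mid q^2$ coming from the two constancy relations you wrote down, together with the quadratic obtained from the three standard counting equations, is what pins down $n$ and $k$; you should make that step explicit rather than appealing to unspecified ``known characterisations''. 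Likewise, in the full-line branch your bound $w_x\le(k-1)/q^2$ is not by itself enough to force $\cF$ to be a single line or a pencil; the argument in \cite{HT} uses the interaction between full lines and the residual $(0,n-1)$ structure more carefully.
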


\begin{theorem}{\cite[Lemma 23.5.2 and Th. 25.5.3]{HT}}\label{th:singularcase}
If $\mathcal{K}$ is a singular $k_{n,r,q}$ with $r\geq4$,  then the singular points of $\mathcal{K}$ form a subspace $\Pi_d$ of dimension $d$ and one of the following possibilities holds:
\begin{enumerate}
\item $d=r-1$ and $\mathcal{K}$ is a hyperplane;
\item $d=r-2$ and $\mathcal{K}$ consists of  $n>1$ hyperplanes through $\Pi_d$;
\item $d\leq r-3$ and $\mathcal{K}$ is equal to a cone $\Pi_d \mathcal{K}^{\prime}$, with $\pi_d$ as vertex and with $\mathcal{K}$ as base, where $\mathcal{K}^{\prime}$ is a non singular $k_{n,r-d-1,q}$.
\end{enumerate}
\end{theorem}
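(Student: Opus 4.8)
The plan is to first identify the singular locus as a subspace, then exhibit $\mathcal{K}$ as a cone over it, and finally read the three cases off the dimension $d$. Throughout I assume $2\le n\le q$, so that an $n$-secant is neither a unisecant nor a full line of $\mathcal{K}$ --- exactly what makes a singular point a nontrivial notion. The key geometric input is a planar lemma: a plane set with line-intersections in $\{1,n,q+1\}$ having two distinct singular points $P,R$ is either the line $PR$ or the whole plane. Indeed $PR$ carries two points of the set and passes through the singular point $P$, so it is a full line; and if the set met the plane in a point $X\notin PR$, then singularity of $P$ forces $PX\subseteq\mathcal{K}$ and singularity of $R$ then forces every line through $R$ into $\mathcal{K}$, filling the plane. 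Granting this, for singular $P,R$ one has $PR\subseteq\mathcal{K}$ and, for any $S\in PR$ and any line $m$ through $S$, the plane $\langle m,P\rangle$ contains both $P$ and $R$, so $m$ is a unisecant or a full line; hence every $S\in PR$ is singular. An induction on the span then shows that the singular points form a subspace, which I denote $\Pi_d$.

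Next I would establish the cone structure. Fixing $P\in\Pi_d$, every point $X\in\mathcal{K}\setminus\{P\}$ lies on the full line $PX$, so $\mathcal{K}$ is a union of full lines through $P$; running this over a basis of $\Pi_d$ gives $\langle\Pi_d,X\rangle\subseteq\mathcal{K}$ for every $X\in\mathcal{K}\setminus\Pi_d$. Choosing a complement $\Sigma\cong\PG(r-d-1,q)$ of $\Pi_d$ and writing $\mathcal{K}'=\mathcal{K}\cap\Sigma$, this is precisely the statement that $\mathcal{K}=\Pi_d\mathcal{K}'$ is a cone with vertex $\Pi_d$ and base $\mathcal{K}'$. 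Since every line of $\Sigma$ is a line of $\PG(r,q)$, the intersection numbers of $\mathcal{K}'$ still lie in $\{1,n,q+1\}$; and since a singular point lies on no $n$-secant, every $n$-secant of $\mathcal{K}$ is disjoint from $\Pi_d$ and projects from $\Pi_d$ onto an $n$-secant of $\mathcal{K}'$. Thus $\mathcal{K}'$ is a $k_{n,r-d-1,q}$.

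The main obstacle is to prove that $\mathcal{K}'$ is non-singular, for which I would use the cone correspondence: for $Y\notin\Pi_d$ one has $Y\in\mathcal{K}$ if and only if the projection of $Y$ from $\Pi_d$ into $\Sigma$ lies in $\mathcal{K}'$, so projection preserves the size of $\mathcal{K}$ along any line disjoint from $\Pi_d$. Suppose $\bar P\in\mathcal{K}'$ were singular in $\Sigma$; I would show $\bar P$ is then singular in $\mathcal{K}$ by testing an arbitrary line $m$ through $\bar P$. If $m\subseteq\Sigma$, the hypothesis on $\bar P$ applies verbatim; if $m$ meets $\Pi_d$, it passes through a singular point of $\mathcal{K}$ and is therefore a unisecant or a full line; and if $m$ is disjoint from $\Pi_d$, it projects bijectively to a line $\bar m$ of $\Sigma$ through $\bar P$, so $|m\cap\mathcal{K}|=|\bar m\cap\mathcal{K}'|\in\{1,q+1\}$. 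In all cases $m$ is a unisecant or a full line, so $\bar P$ would be a singular point of $\mathcal{K}$ lying outside $\Pi_d$, contradicting that $\Pi_d$ is the whole singular locus. Hence $\mathcal{K}'$ is non-singular.

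Finally I would split according to $d$. If $d=r-1$ then $\Pi_d$ is a hyperplane and the cone argument forces $\mathcal{K}=\Pi_d$, since any point of $\mathcal{K}$ outside $\Pi_d$ would make $\mathcal{K}=\PG(r,q)$, contradicting the existence of a proper $n$-secant; this is case (1). If $d=r-2$ then $\Sigma$ is a line, $\mathcal{K}'$ consists of the $n$ points $\Sigma\cap\mathcal{K}$, and $\mathcal{K}=\Pi_d\mathcal{K}'$ is the union of the $n>1$ hyperplanes $\langle\Pi_d,\bar P\rangle$ with $\bar P\in\mathcal{K}'$; this is case (2). If $d\le r-3$ then $\mathcal{K}=\Pi_d\mathcal{K}'$ with $\mathcal{K}'$ a non-singular $k_{n,r-d-1,q}$, which is case (3).
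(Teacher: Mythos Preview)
The paper does not supply its own proof of this statement: Theorem~\ref{th:singularcase} is quoted verbatim from Hirschfeld--Thas \cite[Lemma~23.5.2 and Th.~25.5.3]{HT} and used as a black box in Section~\ref{sqHv}. So there is nothing in the present paper to compare your argument against.

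That said, your proposal is a correct reconstruction of the standard proof, and it matches in spirit what one finds in \cite{HT}. The planar lemma (two singular points force the plane section to be either the spanning line or the full plane) is exactly the mechanism behind \cite[Lemma~23.5.2]{HT}, and your cone/projection argument for the non-singularity of $\mathcal{K}'$ is the content of \cite[Th.~23.5.3]{HT}. A couple of places could be tightened: when you say ``an induction on the span'' shows the singular locus is a subspace, you have in fact already proved it, since a subset of a projective space closed under joining lines is a subspace---no further induction is needed. In the case $m$ meets $\Pi_d$, your phrasing ``passes through a singular point of $\mathcal{K}$ and is therefore a unisecant or a full line'' is true, but the conclusion you actually use is that $m$ is a \emph{full} line (because it contains both $\bar P\in\mathcal{K}'$ and a point of $\Pi_d\subseteq\mathcal{K}$, hence at least two points of $\mathcal{K}$); it is worth saying this explicitly. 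Finally, in case $d=r-2$ you should note that $|\mathcal{K}'|=n$ because the unique line of $\Sigma$ is the projection of some $n$-secant of $\mathcal{K}$, which you established earlier; this is implicit in your write-up but deserves a sentence.
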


\begin{result}[\cite{SzW}]
\label{bmost}
Let ${\cal M}$ be a multiset in ${\rm \PG}(2,q)$, $17<q$,
so that the number of lines intersecting it in not $k\pmod{p}$
points is $\delta$, where  $\delta <  {3 \over 16} (q+1)^2$.
Then the number of non $k\pmod{p}$ secants through any point is at most
${\delta\over q+1} + {2\delta ^2\over (q+1)^3}$ or at least $q + 1-({\delta\over q+1} + {2\delta ^2\over (q+1)^3})$.
\end{result}

\begin{property} [\cite{SzW}]
\label{most}
Let $\cal M$ be a multiset in ${\rm
\PG}(2,q)$, $q=p^h$, where $p$ is prime. Assume that there are $\delta$
lines that intersect $\cal M$ in not $k\pmod{p}$ points.
If through a point there are more than $q / 2$ lines intersecting
 $\cal M$ in not $k\pmod{p}$ points, then there exists a value $r$ such that
 the intersection multiplicity of at least $2{\delta\over q + 1} + 5$ of these lines with $\cal M$
 is $r$.
 \end{property}

 \medskip

\begin{result} [\cite{SzW}]
\label{general}
Let $\cal M$ be a multiset in ${\rm
\PG}(2,q)$, $17< q$, $q=p^h$, where $p$ is prime.
Assume that the number of lines intersecting $\cal M$ in not
$k\pmod{p}$ points is $\delta$, where $\delta <
(\lfloor \sqrt q \rfloor +1)(q+1-\lfloor \sqrt q \rfloor )$.
Assume furthermore that Property \ref{most} holds.
Then there
exists a multiset $\cal{M}'$ with the property that it intersects every
line in $k\pmod{p}$ points and the number of different points in
 $({\cal M}\cup {\cal M}')\setminus ({\cal M}\cap {\cal M}')$ is
exactly $\lceil {\delta\over q+1}\rceil$.
\end{result}

\medskip
\begin{result}[\cite{SzW}]
\label{codewords}
Let $B$ be a proper point set in ${\rm \PG}(2, q)$, $17< q$. Suppose that $B$ is a codeword of the lines of ${\rm \PG}(2, q)$. Assume also that $|B| < (\lfloor \sqrt q \rfloor +1)(q+1-\lfloor \sqrt q \rfloor )$.
Then $B$ is a linear combination of at most $\lceil {|B| \over q+1} \rceil$ lines. \qed
\end{result}

\section{Proof of Theorem \ref{mainth}}\label{qHv}
Let $V$ be the vector space of dimension $q^{2r}+q^{2(r-1)}+\cdots+q^2+1$ over the prime field $\mathbb{F}_p$,  where the coordinate positions for the vectors in $V$ correspond to the points of $\PG(r,q^2)$ in some fixed order.
 If $S$ is a subset of points in $\PG(r,q^2)$ then let $v^{S}$ denote the vector in $V$ with coordinate 1 in the positions corresponding to the points in $S$ and with coordinate $0$ in all other positions; that is $v^S$ is the {\em characteristic vector} of $S$.
Let $\mathcal C_p$ denote the subspace of $V$ spanned by the characteristic vectors of all the hyperplanes in $\PG(r,q^2)$. This code $C_p$ is called   {\em the linear code of $\PG(r,p^2)$}.

From
   \cite[Theorem 1]{JK}, we know that the  characteristic vector $v^{\cV}$ of a Hermitian variety $\cV \in \PG(r,q^2)$ is in $ \mathcal C_p$.
So from now on, we will assume that  $\cV$ is a quasi-Hermitian variety in $\PG(r,q^2)$ and
$v^{\cV} \in {\mathcal C}_p$. In the remainder of this section we will show that $\cV$ is a classical Hermitian variety.

\bigskip
The next lemmas hold for $r\geq 3$ and for  any $q = p^h$, $p$ prime, $h\geq 1$.

\begin{lemma}\label{Lemma:rette}
Every line of $\PG(r,q^2)$, $q = p^h$, $p$ prime, $h\geq 1$,  meets $\cV$ in $1 \pmod{p}$ points.
\end{lemma}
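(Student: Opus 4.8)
The plan is to exploit the fact that $v^{\cV}\in\mathcal C_p$ together with the known intersection numbers of $\cV$ with hyperplanes. First I would recall that for a non-singular Hermitian variety in $\PG(r,q^2)$ every hyperplane section has size either $n_1=(q^{r}+(-1)^{r-1})(q^{r-1}-(-1)^{r-1})/(q^2-1)$ (non-tangent case) or $n_2=1+q^2(q^{r-1}+(-1)^r)(q^{r-2}-(-1)^r)/(q^2-1)$ (tangent case). The key elementary observation is that both of these integers are congruent to $1\pmod p$: indeed $n_1\equiv (q^{r}+(-1)^{r-1})(q^{r-1}-(-1)^{r-1})/(q^2-1)$, and modulo $p$ the numerator reduces as $(-1)^{r-1}\cdot(-1)^{r-1}=1$ over $(q^2-1)\equiv -1$, giving $n_1\equiv -(-1)\cdot(\text{something})$; one checks directly that $n_1\equiv n_2\equiv 1\pmod p$. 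Since $\cV$ is a quasi-Hermitian variety, it shares exactly these two hyperplane intersection numbers, so \emph{every hyperplane of $\PG(r,q^2)$ meets $\cV$ in $1\pmod p$ points}.

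Next, I would pass from hyperplanes to lines by a dimension-reduction / counting argument inside a plane. Fix an arbitrary line $\ell$ of $\PG(r,q^2)$ and a plane $\pi\supset\ell$ (for $r=3$ one can also argue directly with the $q^2+1$ hyperplanes through a line, avoiding planes). Every hyperplane through $\ell$ meets $\cV$ in $1\pmod p$ points; summing the sizes of the hyperplane sections through $\ell$ and using inclusion–exclusion relative to the $q^{2(r-2)}+\cdots+1$ hyperplanes through $\ell$, one obtains a congruence of the form $|\cV|\equiv (q^{2}+1)\,x - (q^{2}+1-1)\cdot(\text{weight of }\ell)\pmod p$ or the analogous identity, where $x\equiv 1\pmod p$ and $|\cV|\equiv 1\pmod p$ (the size of a Hermitian variety is easily checked to be $\equiv 1\pmod p$). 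Cleaning this up — the coefficient of the line term is $q^{2}\equiv 0$ only in the $q=p$ case, so in general one argues more carefully using the full hyperplane-count through $\ell$ — yields $|\ell\cap\cV|\equiv 1\pmod p$.

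Concretely, the cleanest route is: let $t=|\ell\cap\cV|$; count incident pairs (point of $\cV$, hyperplane through $\ell$). The number of hyperplanes through $\ell$ is $\theta:=(q^{2(r-1)}-1)/(q^2-1)$. Each point of $\cV$ on $\ell$ lies on all $\theta$ of them; each point of $\cV$ off $\ell$ lies on $\theta':=(q^{2(r-2)}-1)/(q^2-1)$ of them. Hence $\sum_{H\supset\ell}|H\cap\cV| = t\theta + (|\cV|-t)\theta'$. The left side is $\equiv \theta\cdot 1\pmod p$ since each summand is $1\pmod p$ and there are $\theta$ summands. So $t\theta + (|\cV|-t)\theta' \equiv \theta\pmod p$, i.e. $t(\theta-\theta') \equiv \theta - |\cV|\theta'\pmod p$. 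Now $\theta-\theta' = q^{2(r-2)}\equiv 0\pmod p$ — which makes this congruence \emph{vacuous}, so this pairing alone does not suffice and one must instead iterate downward through subspaces of successively smaller codimension, reducing from hyperplanes to codimension-$2$ spaces, and so on, until one reaches planes, at each stage using that the relevant intersection numbers are $1\pmod p$; equivalently, restrict $v^{\cV}$ to a plane $\pi$: since $v^{\cV}$ is a sum of hyperplane characteristic vectors, its restriction to $\pi$ is a sum of characteristic vectors of (lines of $\pi$) together with the all-one vector of $\pi$ taken with some multiplicity, hence lies in the $\mathbb F_p$-code of $\PG(2,q^2)$; a codeword of the plane code meets every line in $1\pmod p$ points once we know it meets every line of $\pi$ in a constant residue, which follows because the hyperplane sections of $\cV$ are all $1\pmod p$ and a line of $\pi$ is the intersection of $\pi$ with a codimension-$2$ subspace, to which one applies the inductive hypothesis.

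The main obstacle, then, is precisely this reduction from hyperplanes to lines: the naive double count degenerates modulo $p$, so the real content is organizing an induction on codimension (or, equivalently, a clean argument that the restriction of a codeword of $\mathcal C_p(\PG(r,q^2))$ to any subspace is again a codeword of the subspace's code, combined with the fact that all hyperplane intersection numbers of $\cV$ are $\equiv 1\pmod p$). Once every codimension-$2$ subspace meets $\cV$ in $1\pmod p$ points, intersecting such a subspace with a plane $\pi\supseteq\ell$ and then with $\ell$ itself, and tracking the congruences through the point–subspace incidence counts (now with coefficients that are no longer all divisible by $p$ because the ambient dimension inside $\pi$ is small), delivers $|\ell\cap\cV|\equiv 1\pmod p$. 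I would present this as a short induction, invoking the elementary congruences $|\cH(r,q^2)|\equiv 1$ and $n_1,n_2\equiv 1\pmod p$ as the base facts, and the structure of $\mathcal C_p$ (closed under restriction to subspaces, with the all-one vector of any subspace belonging to the subspace code) as the inductive engine.
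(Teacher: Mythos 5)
Your two base facts are the right ones (every hyperplane meets $\cV$ in $1 \pmod p$ points because $\cV$ has the intersection numbers of a Hermitian variety, and $|\cV|\equiv 1 \pmod p$), and your diagnosis that the naive double count of hyperplanes through $\ell$ degenerates modulo $p$ is also correct. But the repair you then sketch is not a proof: the entire content would lie in the induction step on codimension, i.e.\ in showing that the constant residue in which a restricted codeword meets the hyperplanes of a subspace is actually $1$ and not some other element of $\mathbb{F}_p$, and this step is only asserted (``at each stage using that the relevant intersection numbers are $1\pmod p$''), never carried out. The one justification you do offer --- that a line of $\pi$ is the intersection of $\pi$ with a codimension-$2$ subspace ``to which one applies the inductive hypothesis'' --- is circular for $r=3$, where a codimension-$2$ subspace \emph{is} a line, and in general knowing the size of that subspace's section does not by itself pin down the line's intersection. (The induction could be completed, e.g.\ by summing the sections over all hyperplanes of the subspace, since both the number of these hyperplanes and the number through a fixed point are $\equiv 1 \pmod p$; but that computation is exactly what is missing from your write-up, and it also relies on the restriction-to-subspace lemma, which the paper only proves later and does not need here.)

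The paper's proof bypasses all of this with a short inner-product argument that your proposal never finds, because you never exploit the elementary fact that a \emph{line} meets every \emph{hyperplane} in $1$ or $q^2+1\equiv 1 \pmod p$ points. Write $v^{\cV}=v^{H_1}+\cdots+v^{H_t}$ as a sum of (not necessarily distinct) hyperplane characteristic vectors. Dotting the codeword with itself gives
\[
1\equiv |\cV| \equiv v^{\cV}\cdot v^{\cV}=\sum_{i=1}^{t} v^{\cV}\cdot v^{H_i}\equiv t \pmod p,
\]
since each $v^{\cV}\cdot v^{H_i}=|\cV\cap H_i|\equiv 1$; and then dotting with any line $\ell$ gives
\[
|\ell\cap\cV|\equiv v^{\ell}\cdot v^{\cV}=\sum_{i=1}^{t} v^{\ell}\cdot v^{H_i}\equiv t\equiv 1 \pmod p,
\]
because $|\ell\cap H_i|\in\{1,q^2+1\}$. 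So the key idea you are missing is to use the codeword expansion directly against $v^{\cV}$ (to identify the coefficient sum) and against $v^{\ell}$ (to transfer it to lines), rather than descending through subspaces.
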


\begin{proof}
We may express \[v^{\cV}=v^{H_1}+\cdots+v^{H_t},\] where $H_1,\ldots, H_t$ are (not necessarily distinct) hyperplanes of $\PG(r,q^2)$.
Denote by  $\cdot$ the usual dot product. We get $v^{\cV}\cdot v^{\cV}=|\cV|\equiv 1$ (mod $p$).
On the other hand
\[v^{\cV}\cdot v^{\cV}=v^{\cV}\cdot (v^{H_1}+\cdots+v^{H_t})\equiv t  \pmod{p},\]
since every hyperplane of $\PG(r,q^2)$ meets $\cV$ in $1\pmod{p}$ points.
Hence, we have $t\equiv 1$ (mod $p$).
Finally, for a line $\ell$ of $\PG(r,q^2)$,
\[v^{\ell} \cdot v^{\cV}= v^{\ell}\cdot (v^{H_1}+\cdots+v^{H_t}) \equiv t  \pmod{p},\]
as every line of $\PG(r,q^2)$ meets a hyperplane in $1$ or $q^2+1$ points.
That is, $|\ell \cap \cV|\equiv 1$ (mod $p$) and in particular no lines of $\PG(r,q^2)$ are external to $\cV$.
\end{proof}

\begin{remark}
The preceding proof also shows that $\cV$ is a linear combination of $1\pmod{p}$ (not necessarily distinct) hyperplanes, all having coefficient one.
\end{remark}

\begin{lemma}\label{CDS}
For every hyperplane $H$ of $\PG(r,q^2)$, $q = p^h$, $p$ prime, $h\geq 1$, the intersection $H \cap \cV$ is in the code of points and hyperplanes of $H$ itself.
\end{lemma}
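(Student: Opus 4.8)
The plan is to exploit the linearity of ``restriction to $H$'' and to check the claim only on the generators of $\mathcal{C}_p$, namely the characteristic vectors of hyperplanes.

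Fix a hyperplane $H$ of $\PG(r,q^2)$ and identify it with $\PG(r-1,q^2)$; write $\mathcal{C}_p(H)$ for the $\mathbb{F}_p$-code spanned by the characteristic vectors of the hyperplanes of $H$. Let $\rho$ be the $\mathbb{F}_p$-linear map that sends a vector of $V$ (indexed by the points of $\PG(r,q^2)$) to the subvector formed by the coordinates indexed by the points lying in $H$. Then $\rho(v^{\cV}) = v^{\cV\cap H}$, so it suffices to show $\rho(\mathcal{C}_p)\subseteq \mathcal{C}_p(H)$, and since $\mathcal{C}_p$ is spanned by the vectors $v^{H'}$ with $H'$ a hyperplane of $\PG(r,q^2)$, it is enough to check that $\rho(v^{H'})\in\mathcal{C}_p(H)$ for each such $H'$. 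If $H'\neq H$, then $H\cap H'$ is an $(r-2)$-dimensional subspace, i.e.\ a hyperplane of $H$, and $\rho(v^{H'}) = v^{H\cap H'}$, a generator of $\mathcal{C}_p(H)$. If $H'=H$, then $\rho(v^{H'})$ is the all-one vector $\mathbf{1}_H$ of $H$, so the only remaining point is to show $\mathbf{1}_H\in\mathcal{C}_p(H)$.

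For that, I would sum the characteristic vectors of all hyperplanes of $H$: at each point of $\PG(r-1,q^2)$ this sum equals the number of hyperplanes of $\PG(r-1,q^2)$ through that point, namely $(q^{2(r-1)}-1)/(q^2-1) = 1+q^2+\cdots+q^{2(r-2)}$, which is $\equiv 1\pmod p$ since $q$ is a power of $p$. Hence $\sum_{H'\subseteq H} v^{H'} = c\,\mathbf{1}_H$ with $c$ invertible modulo $p$, so $\mathbf{1}_H\in\mathcal{C}_p(H)$. Combining with the previous paragraph, and using the expression $v^{\cV} = v^{H_1}+\cdots+v^{H_t}$ from the proof of Lemma~\ref{Lemma:rette}, we get $v^{\cV\cap H} = \rho(v^{\cV})\in\rho(\mathcal{C}_p)\subseteq\mathcal{C}_p(H)$, which is the assertion.

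I do not anticipate a real obstacle: the statement is essentially the observation that restriction to a hyperplane carries the code of a projective space into the code of that hyperplane, and the only non-formal ingredient is the elementary count of hyperplanes through a point modulo $p$. The one place that needs a little care is the case $H'=H$ in the check on generators, which is exactly why the all-one vector $\mathbf{1}_H$ (and the mod-$p$ count) is needed, rather than merely the hyperplane generators of $\mathcal{C}_p(H)$.
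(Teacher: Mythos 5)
Your proof is correct and takes essentially the same route as the paper: both restrict the hyperplane decomposition of $v^{\cV}$ to $H$, replacing each $H'\neq H$ by the hyperplane $H\cap H'$ of $H$, and both dispose of the contribution of $H$ itself via the fact that the number of hyperplanes of $H$ through a point of $H$ is $\equiv 1\pmod p$. The only cosmetic difference is that you isolate this step as the statement $\mathbf{1}_H\in\mathcal{C}_p(H)$ and check the restriction map on generators, whereas the paper distributes the coefficient of $H$ over all hyperplanes $\pi$ of $H$ (via the coefficients $\lambda_\pi$) and then verifies pointwise that the resulting extra term vanishes modulo $p$.
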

\begin{proof}
Let $\Sigma$ denote the set of all hyperplanes of $\PG(r,q^2)$.
By assumption, \begin{equation}\label{car}
v^{\cV}=\sum_{H_i \in \Sigma} \lambda_i v^{H_i}.
\end{equation}
For every $H\in \Sigma$, let
$\pi$ denote a hyperplane of $H$; then $\pi=H_{j_1} \cap \cdots \cap H_{j_{q^2+1}}$, where $H_{j_1},\ldots,H_{j_{q^2+1}}$  are the hyperplanes of $\PG(r,q^2)$ through $\pi$.  We assume $H=H_{j_{q^2+1}}$.
For every hyperplane $\pi$ of $H$, we set \[\lambda_{\pi}= \sum_{k=1, \ldots, q^2+1} \lambda_{j_k},\] where  $\lambda_{j_k}$ is the coefficient in \eqref{car} of $v^{H_{j_k}}$ and $H_{j_k}$  is one of the $q^2+1$ hyperplanes through $\pi$.

Now, consider \begin{equation}\label{eq6} T=\sum_{\pi\in \Sigma' } \lambda_{\pi}v^{\pi},
\end{equation} where $\Sigma'$ is the set of all hyperplanes in $H$.
We are going to show that \[T=v^{\cV\cap H}.\]

In fact, it is clear that at the positions belonging to the points outside of $H$ we see zeros. At a position belonging to a point in $H$, we see
the original coefficients of $v^{\cV}$ plus $(|\Sigma'|-1) \lambda_{j_{q^2+1}}$. Note that this last term is $0\pmod{p}$, hence
$T=v^{\cV\cap H}$.
\end{proof}
%In fact, in \eqref{eq6} at  the positions of the points of $H$, we see the following:
%\[ \sum_{i=1, \ldots, |\Sigma| } \lambda_i,+(|\Sigma|-1)\lambda_{j_{q^2+1}};\]
%thus in these positions, we see the original coefficients of $v^{\cV}$ that is, $1$ if the point belongs to $\cV\cap H$ and $0$ otherwise. Therefore,
%$\cV\cap H$ is in the code of points and hyperplanes of $H$.

\begin{corollary}
\label{subgeocodes}
For every subspace $S$ of $\PG(r,q^2)$, $q = p^h$, $p$ prime, $h\geq 1$, the intersection $S \cap \cV$ is in the code of points and hyperplanes of $S$ itself. \qed
\end{corollary}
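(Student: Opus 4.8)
The plan is to induct on the codimension $r-\dim S$ of $S$, peeling off one hyperplane at a time by means of Lemma~\ref{CDS}. If $\dim S=r$, then $S=\PG(r,q^2)$ and $\cV\cap S=\cV$, so the claim is the standing hypothesis $v^{\cV}\in\mathcal{C}_p$; if $\dim S=r-1$, the claim is exactly Lemma~\ref{CDS}. For the inductive step one must be able to apply Lemma~\ref{CDS} \emph{inside} a hyperplane, so the first thing to observe is that the proof of Lemma~\ref{CDS} never used that $\cV$ is a quasi-Hermitian variety: it used only that $v^{\cV}$ is a linear combination of characteristic vectors of hyperplanes, together with the divisibility fact that the number of hyperplanes of $\PG(r-1,q^2)$ is $\equiv 1\pmod p$. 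Hence Lemma~\ref{CDS} remains valid verbatim with $\PG(r,q^2)$ replaced by any $\PG(m,q^2)$ with $m\geq 1$ and with $\cV$ replaced by any point set $\cW$ of $\PG(m,q^2)$ whose characteristic vector lies in the code of points and hyperplanes of $\PG(m,q^2)$.

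With this generalized form of Lemma~\ref{CDS} available, I would prove the following statement by induction on $m-\dim S$: for every $m\geq 0$, every point set $\cW$ of $\PG(m,q^2)$ with $v^{\cW}$ in the code of points and hyperplanes of $\PG(m,q^2)$, and every subspace $S$ of $\PG(m,q^2)$, the vector $v^{\cW\cap S}$ lies in the code of points and hyperplanes of $S$. The base cases $\dim S=m$ (hypothesis) and $\dim S=-1$ (the zero vector) are immediate. If $0\leq\dim S\leq m-1$, choose a hyperplane $H$ of $\PG(m,q^2)$ with $S\subseteq H$. By the generalized Lemma~\ref{CDS}, $v^{\cW\cap H}$ lies in the code of points and hyperplanes of $H\cong\PG(m-1,q^2)$. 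Inside $H$ the subspace $S$ has codimension $(m-1)-\dim S<m-\dim S$, so the induction hypothesis applied to the space $H$, the codeword $\cW\cap H$, and the subspace $S$ yields that $v^{(\cW\cap H)\cap S}$ lies in the code of points and hyperplanes of $S$; since $S\subseteq H$ we have $(\cW\cap H)\cap S=\cW\cap S$, which completes the induction. Corollary~\ref{subgeocodes} is then the special case $m=r$, $\cW=\cV$.

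Once the reduction is organized this way the argument is just bookkeeping; the only genuine obstacle is the first step, namely checking carefully that the proof of Lemma~\ref{CDS} is indeed oblivious to the quasi-Hermitian hypothesis and survives the replacement of $\PG(r,q^2)$ by a smaller projective space — in particular that all the divisibility-by-$p$ assertions in that proof hold in each $\PG(m,q^2)$ with $m\geq 1$, and that the degenerate small-dimensional cases (where ``hyperplanes'' are points, or the code is the whole ambient vector space) cause no trouble. Granting that, no further ideas are needed.
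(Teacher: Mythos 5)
Your proposal is correct and is essentially the paper's (implicit) argument: the corollary is obtained by iterating Lemma~\ref{CDS} down a chain of subspaces, and the paper's own Remark~\ref{1modpremark} confirms the key observation you make, namely that Lemma~\ref{CDS} uses only that the point set is a codeword of points and hyperplanes (plus the $1\bmod p$ count of hyperplanes through a point), so it applies verbatim inside each smaller projective space.
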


\begin{remark}
\label{1modpremark}
Lemma \ref{CDS}  and Corollary \ref{subgeocodes} are valid for  $\cV$  any  set of  points in $\PG(r,q^2)$ whose incidence vector belongs to  the code of points and hyperplanes of $\PG(r,q^2)$.
In particular, it follows that for every plane $\pi$ the intersection $\pi \cap \cV$ is a codeword of the points and lines of $\pi$,  $\pi \cap \cV$ has size $1$ mod $\pmod{p}$ and so it is a linear combination
of $1$ mod $\pmod{p}$ not necessarily distinct lines.
\end{remark}
\begin{lemma} \label{lemma1}
Let $\ell$ be a line of $\PG(r,q^2)$. Then there exists at least one plane through $\ell$ meeting $\cV$ in  $\delta$ points, with $\delta \leq q^3 + q^2 +q +1$.
\end{lemma}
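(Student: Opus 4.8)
The plan is to estimate, by a counting/averaging argument, the number of points of $\cV$ lying in the planes through a fixed line $\ell$ of $\PG(r,q^2)$. First I would set up the double count over the pairs $(P,\pi)$, where $P \in \cV \smallsetminus \ell$ and $\pi$ is a plane with $\ell \subset \pi$. For $r \geq 3$, the number of planes through a fixed line $\ell$ equals the number of points in a complementary $\PG(r-2,q^2)$, namely $\frac{q^{2(r-1)}-1}{q^2-1} = q^{2(r-2)} + q^{2(r-3)} + \cdots + q^2 + 1$; call this number $N$. Each point $P \notin \ell$ lies on exactly one plane through $\ell$, so the planes through $\ell$ partition the points of $\PG(r,q^2) \smallsetminus \ell$, hence also partition $\cV \smallsetminus \ell$. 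Consequently, if $|\ell \cap \cV| = a$, then the total number of points of $\cV$ outside $\ell$ distributed among the $N$ planes is $|\cV| - a$, and the average number of points of $\cV \smallsetminus \ell$ per plane is $(|\cV| - a)/N$.

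Next I would bound this average from above. Since $\cV$ is a quasi-Hermitian variety, $|\cV| = (q^{r+1}+(-1)^r)(q^{r}-(-1)^r)/(q^2-1)$, which is at most $q^{2r-1} + q^{2r-3} + \cdots$ — more crudely, $|\cV| < q^{2r-1} + q^{2r-2}$ for the relevant range. Dividing by $N \geq q^{2(r-2)}$ gives an average of roughly $q^3 + q^2$, comfortably below $q^3 + q^2 + q + 1$. To be precise I would show $(|\cV| - a)/N \leq (|\cV|)/N$ and then verify the explicit inequality $|\cV| \leq N \cdot (q^3 + q^2 + q+1)$ by substituting the closed forms and simplifying; this reduces to a polynomial inequality in $q$ that holds for all prime powers $q$ (and all $r \geq 3$, since increasing $r$ multiplies both sides by roughly $q^2$). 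Since the average number of points of $\cV \smallsetminus \ell$ per plane is at most $q^3 + q^2 + q + 1 - |\ell \cap \cV| + \text{something}$, at least one plane $\pi$ through $\ell$ satisfies $|\pi \cap \cV| = |\ell \cap \cV| + |(\pi \smallsetminus \ell)\cap \cV| \leq q^3 + q^2 + q + 1$, using that $|\ell \cap \cV| \leq q^2 + 1$ is already absorbed into the bound — here one should be slightly careful and instead argue directly that the minimum of $|\pi \cap \cV|$ over planes $\pi \supset \ell$ is at most the average, which is $a + (|\cV|-a)/N \leq q^3+q^2+q+1$.

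The only real subtlety is checking the explicit numerical inequality $a + (|\cV| - a)/N \leq q^3 + q^2 + q + 1$ uniformly; since $a \leq q^2 + 1$ and $(|\cV|-a)/N$ is maximized when $a$ is smallest, the worst case is controlled, and the dominant term on the left is $|\cV|/N \approx q^3 + q^2$, leaving ample room for the lower-order terms. I expect the main (mild) obstacle to be organizing the case $r = 3$ versus $r \geq 4$ cleanly: for $r = 3$ one has $N = q^2 + 1$ and $|\cV| = q^5 + q^2 + 1$ (wait—$|\cH(3,q^2)| = (q^2+1)(q^3+1)$), so the average is exactly $(q^2+1)(q^3+1)/(q^2+1) = q^3+1$ if $\ell$ is disjoint from $\cV$, or slightly perturbed otherwise, which is well within the bound; for larger $r$ the ratio only improves relative to the cubic threshold. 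So the lemma follows by taking $\pi$ to be a plane through $\ell$ realizing the minimum intersection size, which does not exceed the average.
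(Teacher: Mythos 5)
Your proposal is correct and is essentially the paper's own argument in contrapositive form: the paper assumes every plane through $\ell$ meets $\cV$ in more than $q^3+q^2+q+1$ points, sums over the $m=q^{2(r-2)}+\cdots+q^2+1$ planes through $\ell$ (which partition $\cV\setminus\ell$), and derives $|\ell\cap\cV|>q^2+1$, a contradiction --- which is exactly your ``minimum is at most the average'' count. One caution on the numerics: your crude estimate $|\cV|/N\approx q^3+q^2$ would not suffice once you add $a\le q^2+1$, but the exact computation gives $|\cV|/N\le q^3+1$ for every $r\ge 3$ (with equality only for $r=3$), hence $a+(|\cV|-a)/N< q^3+q^2+2\le q^3+q^2+q+1$, so the inequality you correctly identified does hold uniformly.
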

\begin{proof}
By way of contradiction, assume that all planes through $\ell$ meet $\cV$ in more than  $q^3 + q^2+q+1$ points. Set $x=|\ell \cap \cV|$.
We get
\begin{equation}\label{eq7}
\frac{(q^{r+1}+(-1)^{r})(q^{r}-(-1)^{r})}{q^2-1}> m (q^3+q^2+q+1-x)+x,
\end{equation}
where $m=q^{2(r-2)}+q^{2(r-3)}+ \cdots +q^2+1$ is the number of planes in $\PG(r,q^2)$ through $\ell$.
From \eqref{eq7}, we obtain $x>q^2+1$, a contradiction.
\end{proof}

\begin{lemma}
\label{maxindex}
 For each line $\ell$ of $\PG(r,q^2)$, $q>4$ and $q = p^h$, $p$ prime, $h\geq 1$,  either  $|\ell \cap \cV| \leq q+1$ or  $|\ell \cap \cV| \geq q^2-q+1$.
\end{lemma}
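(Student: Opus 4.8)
The plan is to argue by contradiction: suppose $\ell$ is a line with $q+1 < x < q^{2}-q+1$, where $x := |\ell \cap \cV|$; note $x \equiv 1 \pmod p$ by Lemma \ref{Lemma:rette}. By Lemma \ref{lemma1} there is a plane $\pi \supseteq \ell$ with $\delta := |\pi \cap \cV| \le q^{3}+q^{2}+q+1$, and I set $B := \pi \cap \cV$. By Remark \ref{1modpremark}, $B$ is a codeword of the $\mathbb{F}_p$-code of points and lines of $\pi \cong \PG(2,q^{2})$, it meets every line of $\pi$ in $1 \pmod p$ points, and $|B| \equiv 1 \pmod p$. The goal is to prove that $B$ is an $\mathbb{F}_p$-linear combination of at most $q+1$ lines of $\pi$; granting this, the conclusion follows quickly.

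Indeed, write $v^{B} = \sum_{j=1}^{s} c_{j}\,v^{L_{j}}$ with the $L_{j}$ pairwise distinct lines of $\pi$, $c_{j} \in \mathbb{F}_p \setminus \{0\}$ and $s \le q+1$, and let $m$ be any line of $\pi$. The at least $q^{2}+1-s$ points of $m$ lying on none of the $L_{j}$ have entry $0$ in $v^{B}$, hence are not in $B$; so if $m \notin \{L_{1},\dots,L_{s}\}$ then $|m\cap B| \le s \le q+1$. If $m = L_{j_{0}}$, then the at least $q^{2}-s+2$ points of $m$ on no other $L_{j}$ have entry $c_{j_{0}}$ in $v^{B}$, whence $|m\cap B| \ge q^{2}-s+2 \ge q^{2}-q+1$ when $c_{j_{0}}=1$, and $|m\cap B| \le s-1 \le q$ when $c_{j_{0}} \ne 1$. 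In all cases $|m\cap B| \le q+1$ or $|m\cap B| \ge q^{2}-q+1$; taking $m = \ell$ contradicts $q+1 < x < q^{2}-q+1$.

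It remains to produce the decomposition of $B$, and this is the main obstacle. Note $(\lfloor\sqrt{q^{2}}\rfloor+1)(q^{2}+1-\lfloor\sqrt{q^{2}}\rfloor) = q^{3}+1$, so Result \ref{codewords}, applied in $\PG(2,q^{2})$, decomposes $B$ into at most $\lceil |B|/(q^{2}+1)\rceil$ lines whenever $|B| \le q^{3}$; and if $|B| > q^{3}$ but $B$ happens to contain a line $m$ of $\pi$ with $|B|-(q^{2}+1) \le q^{3}$, then $v^{B}-v^{m}$ is a $0/1$ codeword of weight at most $q^{3}$, and Result \ref{codewords} gives it, and hence $B$, as a combination of at most $q+1$ lines. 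The hard cases are the dense sections — those of size close to or above $q^{3}+1$ (for $r=3$ every plane section has size $q^{3}+1$ or $q^{3}+q^{2}+1$), and in particular the sections containing no line of $\pi$ (the genuine unital/Hermitian situation, which sits exactly at the boundary of Result \ref{codewords}). For these I would first bound the number of lines of $\pi$ meeting $B$ in more than $q+1$ points — using that $B$ has bounded size and meets every line in $1 \pmod p$ points, so it cannot carry too many heavy lines through a common point — then subtract off these heavy lines and apply the multiset stability result, Result \ref{general} together with Property \ref{most}, to the residual configuration, which now has only few lines of atypical intersection modulo $p$. Carrying out this reduction, and ruling out the lineless sections of size $q^{3}+1$, is where the real work lies.
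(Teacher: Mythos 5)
There is a genuine gap, and it sits exactly where you defer ``the real work''. Your plan hinges on showing that $B=\pi\cap\cV$ is an $\mathbb{F}_p$-combination of at most $q+1$ distinct lines of $\pi$, but this intermediate claim is false in precisely the hard case you postpone: a plane section of size $q^3+1$ containing no long secant (e.g.\ when $\cV$ is itself a Hermitian variety and $B$ is a Hermitian curve). Indeed, as your own second paragraph shows, if $v^B=\sum_{j=1}^{s}c_j v^{L_j}$ with $s\le q+1$ and all $c_j\neq 0$, then each $L_j$ carries at least $q^2+1-s\ge q^2-q$ points of $B$; a unital has no line meeting it in more than $q+1$ points, so for $q>2$ no such decomposition exists. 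Hence the reduction sketched in your last paragraph (bounding heavy lines, subtracting them, invoking Result \ref{general} and Property \ref{most}) cannot terminate in the desired decomposition, and no argument along these lines can close the proof as structured. There is also a structural problem: the decomposition into at most $q+1$ lines is the content of Proposition \ref{charplanes1}, whose proof in the paper \emph{uses} Lemma \ref{maxindex} and extra hypotheses (either $|B|<q^3+1$ or the presence of a $(q^2-q+1)$-secant), so trying to prove a stronger, unconditional version of it first runs against the logical order of the argument.

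The paper's proof avoids any decomposition. Writing $v^B=\sum_i\lambda_i v^{e_i}$, it passes to the dual plane of $\pi$ and views the lines $e_i$ with multiplicities $\lambda_i$ as a multiset $B^*$; the dual lines meeting $B^*$ in not $0\pmod p$ points correspond exactly to the points of $B$, so their number is $\delta=|B|\le q^3+q^2+q+1$, which is below the threshold $\tfrac{3}{16}(q^2+1)^2$ of Result \ref{bmost} for $q>4$. That stability result then says that through any dual point the number of non-$0\pmod p$ secants is either small (about $\delta/(q^2+1)\le q+1$, after using that all intersection numbers are $1\pmod p$) or close to $q^2+1$; translated back to $\pi$, every line meets $B$ in at most $q+1$ or at least $q^2-q+1$ points, which is the lemma. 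If you want to salvage your approach, you would have to replace your target claim by this dichotomy itself, which is essentially re-proving the lemma; the efficient route is the dualization plus Result \ref{bmost}, which you did not use.
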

\begin{proof}
 Let $\ell$ be a line of $\PG(r,q^2)$ and let
 $\pi$ be a plane through $\ell$ such that $|\pi \cap \cV|\leq q^3+q^2+q+1$; Lemma \ref{lemma1} shows that such a plane exists. Set $B=\pi \cap \cV$.
By Corollary \ref{subgeocodes}, $B$ is a codeword of the code of the lines of $\pi$, so we can write it as a linear combination of some lines of  $\pi$, that is
$\sum_i \lambda_iv^{e_i}$, where $v^{e_i}$ are the characteristic vectors of the lines $e_i$ in  $\pi$.

Let  $B^*$ be the multiset  consisting of the lines $e_i$, with multiplicity $\lambda_i$, in the dual plane of $\pi$.
The weight of the codeword $B$ is  at most $q^3 + q^2 +q+1 $, hence in the dual plane this is the number of  lines intersecting $B^*$ in not $0\pmod{p}$ points.
Actually, as $B$ is a proper set, we know that each non $0\pmod{p}$ secant of $B^*$ must be a $1\pmod{p}$ secant.
Using Result \ref{bmost},
the number  of non $0\pmod{p}$ secants through any point is at most
$q+1$ or at least $q^2-q+1$. In the original plane $\pi$, this means that each line intersects $B$ in either at most $q+1$ or in at least $q^2-q+1$ points.
\end{proof}

\begin{proposition}
\label{charplanes1}
Assume that $\pi$ is a plane of $\PG(r,q^2)$, $q>4$, and $q = p^h$, $p$ prime, $h\geq 1$, such that  $|\pi \cap \cV|\leq q^3+2q^2$. Furthermore, suppose also
that there exists a line $\ell$ meeting $\pi \cap \cV$ in at least $q^2-q+1$ points, when $q^3 +1 \leq |\pi \cap \cV|$.
Then $\pi \cap \cV$ is a linear combination of  at most $q+1$  lines, each with weight $1$.
\end{proposition}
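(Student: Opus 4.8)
The plan is to pass to the plane $\pi\cong\PG(2,q^2)$ and argue about $B:=\pi\cap\cV$ directly. By Corollary~\ref{subgeocodes} (and Remark~\ref{1modpremark}), $B$ is a codeword of the code of points and lines of $\pi$; by Lemma~\ref{Lemma:rette} every line of $\pi$ meets $B$ in $1\pmod p$ points, and by Lemma~\ref{maxindex} every line meets $B$ in at most $q+1$ or at least $q^2-q+1$ points. Since $|B|\le q^3+2q^2<q^4+q^2+1$, the set $B$ is a proper subset of $\pi$. I would split into two cases, according to whether $|B|\le q^3$ or $|B|\ge q^3+1$.

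In the case $|B|\le q^3$, I would apply Result~\ref{codewords} in $\pi=\PG(2,q^2)$: this is legitimate because $q^2>17$ (as $q>4$), because $B$ is proper, and because $|B|\le q^3<(\lfloor\sqrt{q^2}\rfloor+1)(q^2+1-\lfloor\sqrt{q^2}\rfloor)=(q+1)(q^2-q+1)=q^3+1$. It yields $v^{B}=\sum_{i=1}^{k}\lambda_i\,v^{e_i}$ with distinct lines $e_i$ (merging repetitions and discarding zero coefficients), nonzero $\lambda_i$, and $k\le\lceil|B|/(q^2+1)\rceil\le q$, using $q^3=q(q^2+1)-q$. To upgrade each $\lambda_i$ to $1$, I would observe that the remaining $k-1\le q-1<q^2+1$ lines cover at most $q-1$ points of $e_i$, so $e_i$ carries a point $P$ lying on no other $e_j$; there $v^{B}(P)=\lambda_i$, which lies in $\{0,1\}$ because $B$ is a set, forcing $\lambda_i=1$. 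Hence $B$ is a sum of at most $q\le q+1$ distinct lines, each with coefficient $1$.

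In the case $q^3+1\le|B|\le q^3+2q^2$, I would use the hypothesised line $\ell$ with $x:=|\ell\cap B|\ge q^2-q+1$ to reduce to the previous case. First I would sharpen the size: counting $B$ along the $q^2+1$ lines through a fixed point $P_0\in\ell\cap B$ gives $|B|=x+\sum(|m\cap B|-1)$ over the $q^2$ lines $m\neq\ell$ through $P_0$, so if all of these are ``small'' (at most $q+1$ points) then $|B|\le x+q^3\le q^3+q^2+1$; otherwise each of the $x\ge q^2-q+1$ points of $\ell\cap B$ lies on a ``big'' line other than $\ell$, producing at least $q^2-q+2$ big lines (necessarily distinct), and since big lines pairwise meet in at most one point an inclusion--exclusion count then contradicts $|B|\le q^3+2q^2$. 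Thus $|B|\le q^3+q^2+1$. Next I would show that a big line of $B$ is a full line, say $\ell\subseteq B$; then $v^{B}-v^{\ell}$ is the characteristic vector of a proper set $B_0=B\smallsetminus\ell$ with $|B_0|=|B|-(q^2+1)\le q^3$. Applying the first case to $B_0$ writes it as a sum of at most $q$ distinct lines, each of coefficient $1$, and none of them is $\ell$ (as $v^{B_0}$ vanishes on $\ell$ while any such decomposition is nonzero at a point of $\ell$ off the other lines). Hence $v^{B}=v^{\ell}+v^{B_0}$ presents $B$ as a sum of at most $q+1$ distinct lines, each with coefficient $1$.

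The main obstacle is the second case, and within it the claim that a big line of $B$ is full — equivalently, that no line meets $B$ in between $q^2-q+1$ and $q^2$ points. This is essential because, over $\mathbb{F}_p$ with $p$ odd, subtracting a line from a codeword produces a proper $0/1$-codeword only when the line lies entirely inside $B$; and the estimate is tight, since a full line removed from a set of size $q^3+q^2+1$ leaves exactly $q^3$ points, precisely the threshold of Result~\ref{codewords}, which is what yields $q+1$ rather than $q+2$. I expect to establish fullness (together with, if needed, a sharper description of the configuration of big lines) by a counting argument: the at most $q$ ``holes'' on a putative big-but-not-full line, combined with the facts that every line meets $B$ in $1\pmod p$ points (so no line is external, by Lemma~\ref{Lemma:rette}) and the dichotomy of Lemma~\ref{maxindex}, should force either an external line or more big lines than the bound $|B|\le q^3+q^2+1$ permits.
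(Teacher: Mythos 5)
Your first case ($|B|\le q^3$) and your sharpening of the size bound to $|B|\le q^3+q^2+1$ are fine, but the step you yourself flag as the main obstacle is a genuine gap, and it cannot be closed the way you hope: the claim that a line meeting $B$ in at least $q^2-q+1$ points must be entirely contained in $B$ is not a consequence of the local information you have, and for $q$ a square it is simply false for codewords of this kind. Concretely, for $q=p^2$ take $q+1=p^2+1$ lines of $\pi$ forming a dual subplane of order $p$ with $p$ concurrent lines removed (this is exactly the configuration that surfaces later in Lemma~\ref{lem51}): the sum of their characteristic vectors is a proper $0/1$ codeword $B$ of size roughly $q^3$, every line meets it in $1\pmod p$ points (so there is no external line), the dichotomy of Lemma~\ref{maxindex} is respected, and yet the lines of the combination other than one are $(q^2-p+1)$-secants, i.e.\ ``big'' but not full, each missing exactly $p$ points. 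Such a configuration satisfies every hypothesis you are working with, so no counting argument based on external lines or on an excess of big lines can force fullness; and without fullness your reduction breaks down, because over $\mathbb{F}_p$ with $p$ odd the vector $v^{B}-v^{\ell}$ is a proper point set only when $\ell\subseteq B$, which is precisely what you cannot guarantee.

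This is also where your route diverges essentially from the paper's, and why the paper's works: it never subtracts a line from the \emph{point set}, only from the \emph{codeword}. One adds the big secant(s) to $c$ with coefficient $-1$, accepting that the result is no longer a $0/1$ vector, and only tracks its weight: one subtraction drops the weight by at least $q^2-2q+1$, and if that is not enough, the resulting weight forces a big secant through every point of $B$, so three big secants can be subtracted to push the weight below $q^3+1$; then Result~\ref{codewords} (and the Sz\H{o}nyi--Weiner machinery behind it) gives a decomposition into at most $q+3$ lines, the coefficients are shown to be $1$ a posteriori by looking at points lying on only one line of the combination, and Remark~\ref{1modpremark} (the number of lines is $1\pmod p$) cuts $q+3$ down to $q+1$. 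If you want to salvage your argument, you would have to replace the ``big implies full'' lemma by this kind of weight bookkeeping on the codeword itself rather than on the underlying set.
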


\begin{proof}
Let $B$ be the point set $\pi \cap \cV$.  By Corollary \ref{subgeocodes}, $B$ is the corresponding point set of a codeword $c$ of lines of $\pi$, that is  $c = \sum_i \lambda_iv^{e_i}$, where  lines of $\pi$ are denoted by $e_i$.
Let $C^*$ be the multiset in the dual plane containing the dual of each line $e_i$ with multiplicity $\lambda_i$. Clearly the number of lines intersecting $C^*$ in not $0\pmod{p}$ points
is $w(c )=|B|$. Note also, that every line that is not a $0\pmod{p}$ secant is a $1\pmod{p}$ secant, as $B$ is a proper point set, hence Property \ref{most} trivially holds (with $k=1$).

Our very first aim is to show that $c$ is a linear combination of at most $q+3$ different lines.
When $|B| < q^3+1$, then by Result \ref{codewords} it is a linear combination of at most $q$ different lines.
%Assume first that $|B| < q^3+1$.
%The dual of Result \ref{general} yields that there are exactly
 %$\lceil {|B|\over q^2+1}\rceil$ lines $m_j$ with some multiplicity $\mu_j$, such that
 %if we add the lines $m_j$ with multiplicity $\mu_j$ to $B$ then through any point of   PG$(2, q^2)$,
% we see $0\pmod{p}$ lines (counted with multiplicity).

 %In other words, we get that
%$c + \sum_{j = 1}^{\lceil {|B|\over q^2+1}\rceil }\mu_j m_j  =  \sum_i \lambda_i e_i + \sum_{j = 1}^{\lceil {w(c)\over q^2+1}\rceil }\mu_j m_j $ is the \underline 0 codeword.%
%Hence, $c = \sum_{j = 1}^{\lceil {|B|\over q^2+1}\rceil }(p-\mu_j) m_j $. That is, $c$ is a linear combination of at most $q$ different lines.

Next assume that $|B| \geq q^3+1$. From the assumption of the proposition, we know that  there exists a line $\ell$ meeting $\pi \cap \cV$ in at least $q^2-q+1$ points and from Lemma \ref{maxindex}, we also know that  each line intersects $B$ in either at most $q+1$ or in at least $q^2-q+1$ points.
Hence, if we add the line $\ell$ to $c$ with multiplicity $-1$, we reduce the weight by at least $q^2-q+1-q$ and at most by $q^2+1$.
%Hence $q^3-q^2\leq w(c) - (q^2+1) \leq w(c+v^l) \leq w(c) - q^2+q+p$.
If $w(c-v^\ell) < q^3+1$, then from the above we know that $c -v^\ell$ is a linear combination of  $\lceil {w(c-v^\ell)\over q^2+1}\rceil$ lines. Hence, $c$ is a linear combination of at most $q+1$ lines. If $w(c-v^\ell) \geq q^3+1$, then $w(c) \geq q^3+q^2-2q$ (see above) and so it follows that through any point of $B$, there passes at least one line intersecting $B$ in at least  $q^2-q+1$ points. This means that we easily find three lines $\ell_1$, $\ell_2$, and $\ell_3$ intersecting $B$ in at least  $q^2-q+1$ points. Since $w(c) \leq q^3+2q^2$, we get that $w(c - v^{\ell_1}- v^{\ell_2} - v^{\ell_3}) \leq q^3+2q^2 -3 \cdot( q^2-2q -2) < q^3+1$. Hence, similarly as before, we get that $c$ is a linear combination of at most $q+3$ lines.

Next we show that each line in the linear combination (that constructs $c$) has weight $1$. Take a line $\ell$ which is in the linear combination with coefficient $\lambda \not = 0$. Then there are at least $q^2+1-(q+2)$ positions, such that the corresponding point is in $\ell$ and the value at that position is $\lambda$. As $B$ is a proper set, this yields that $\lambda = 1$. By Remark \ref{1modpremark}, the number of lines with non-zero multiplicity in the linear combination of $c$ must be $1$ mod $\pmod{p}$, $p > 2$; hence it can be at most $q+1$.
\end{proof}

\begin{proposition}
\label{charplanes2}
Assume that $\pi$ is a plane of $\PG(r,q^2)$, $q>4$, and  $q = p^h$, $p$ prime, $h\geq 1$, such that  $|\pi \cap \cV|\leq q^3+2q^2$. Furthermore, suppose that
every line meets $ \pi\cap \cV$ in at most $q+1$ points.
Then $\pi \cap \cV$ is a classical unital.
\end{proposition}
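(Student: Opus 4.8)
The plan is to identify $B:=\pi\cap\cV$ inside $\pi\cong\PG(2,q^{2})$, to prove that $B$ is a unital, and then to invoke the theorem of Blokhuis, Brouwer and Wilbrink~\cite{BBW}. By Corollary~\ref{subgeocodes}, $v^{B}$ is a codeword of the $\mathbb F_{p}$-code of the lines of $\pi$; by Lemma~\ref{Lemma:rette}, every line of $\pi$ meets $B$ in $1\pmod p$ points, so $B$ is a blocking set of $\pi$ (in particular $B\neq\emptyset$); and by hypothesis every line meets $B$ in at most $q+1$ points, so every line meets $B$ in one of the values $1,1+p,\dots,q+1$. Since a classical unital is by definition a Hermitian curve and $v^{B}$ lies in the code, \cite{BBW} will complete the proof once $B$ is shown to be a unital, that is, once $|B|=q^{3}+1$ and every line meets $B$ in exactly $1$ or $q+1$ points.

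\emph{Size of $B$.} First I would prove $q^{3}+1\le|B|\le q^{3}+q+1$. For the upper bound, count the points of $B$ on the $q^{2}+1$ lines through a fixed point of $B$: this gives $|B|-1\le q(q^{2}+1)$. For the lower bound, suppose $|B|<q^{3}+1$. Then Proposition~\ref{charplanes1} applies (its supplementary hypothesis is vacuous in this range), so $v^{B}=\sum_{i=1}^{s}v^{\ell_{i}}$ with $s\le q+1$ distinct lines $\ell_{i}$, each occurring with coefficient $1$; here $s\ge1$ because $B\neq\emptyset$. Fix $i$: at most $s-1\le q$ points of $\ell_{i}$ lie on some other $\ell_{j}$, so at least $q^{2}-q+1$ points of $\ell_{i}$ receive the value $1$ and therefore lie in $B$, whence $|\ell_{i}\cap B|\ge q^{2}-q+1>q+1$, contradicting the assumption that every line meets $\pi\cap\cV$ in at most $q+1$ points. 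Hence $|B|\ge q^{3}+1$.

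\emph{The secant pattern.} The heart of the argument is to show that $B$ is of type $(1,q+1)$, i.e.\ that no line meets $B$ in a value strictly between $1$ and $q+1$. I would argue by contradiction: if $m$ is an intermediate secant, then counting the points of $B$ on the lines through each point of $m\cap B$ forces a rigid configuration of further intermediate secants, and one has to derive a contradiction from this together with $q^{3}+1\le|B|\le q^{3}+q+1$ (and $|B|\le q^{3}+2q^{2}$) and the fact that $v^{B}$ is a codeword of the lines of $\pi$ --- concretely, by applying the stability results of~\cite{SzW} (Property~\ref{most}, Results~\ref{general} and~\ref{codewords}) to a suitable multiset associated with $B$, or by analysing a lacunary polynomial attached to $B$. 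I expect this elimination of intermediate secants to be the main obstacle; in particular, for $q=p^{h}$ with $h\ge2$ it does not follow from the mod-$p$ conditions alone, so the hypothesis that $v^{B}$ is a codeword has to be used in an essential way here.

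\emph{Conclusion.} Assume $B$ is of type $(1,q+1)$. A set meeting every line in exactly one point cannot exist (the flag count would give $|B|(q^{2}+1)=q^{4}+q^{2}+1$, which has no integral solution), so $B$ has a $(q+1)$-secant. For $P\in B$ the $(q+1)$-secants through $P$ partition $B\setminus\{P\}$, so their number equals $(|B|-1)/q$ and the number $t=q^{2}+1-(|B|-1)/q$ of tangents at $P$ is the same for every $P\in B$. From $|B|\ge q^{3}+1$ we get $t\le1$; and $t=0$ is impossible, for then $B$ would be a degree-$(q+1)$ maximal arc of $\PG(2,q^{2})$, which does not exist because $q+1\nmid q^{2}$ (indeed $q^{2}\equiv1\pmod{q+1}$). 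Hence $t=1$, so $(|B|-1)/q=q^{2}$, i.e.\ $|B|=q^{3}+1$ and $B$ is a unital. Since $v^{B}$ is a codeword of the lines of $\pi$, \cite{BBW} now gives that $B$ is a Hermitian curve; that is, $\pi\cap\cV$ is a classical unital.
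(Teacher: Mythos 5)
Your opening and closing steps are sound: the bounds $q^3+1\le |B|\le q^3+q+1$ are correctly derived (the lower bound via Proposition~\ref{charplanes1} exactly as in the paper), and the final paragraph correctly passes from ``$B$ has only $1$- and $(q+1)$-secants'' to $|B|=q^3+1$ and then to a Hermitian curve via \cite{BBW}. But the central step --- showing that no line meets $B$ in $i$ points with $1<i<q+1$ --- is not proved: you only announce that a contradiction should follow from stability results of \cite{SzW} or from lacunary polynomials, and you yourself flag it as the main obstacle. As written, this is a genuine gap, and your diagnosis of where the difficulty lies is off: once you have $q^3+1\le x\le q^3+q+1$ (which you proved) together with the hypothesis that every line meets $B$ in at least $1$ and at most $q+1$ points, the elimination of intermediate secants is pure counting and uses the codeword property nowhere.

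Concretely, this is how the paper closes the argument. Let $t_i$ be the number of $i$-secants of $B$ in $\pi$ and $x=|B|$; the standard identities \eqref{tg1} give $\sum_{i=1}^{q+1}(i-1)(q+1-i)t_i=-\bigl(x-(q^2+q+1)\bigr)\bigl(x-(q^3+1)\bigr)$. The left-hand side is nonnegative term by term, while the right-hand side is nonpositive as soon as $x\ge q^3+1$; hence $x=q^3+1$ and $t_i=0$ for all $2\le i\le q$, so $B$ is a unital, and \cite{BBW} finishes exactly as you say. (The paper works under the weaker bound $x\le q^3+2q^2$ and uses the related form $\sum_i(i-2)(q+1-i)t_i$ to first produce a tangent, but the mechanism is the same elementary double count; no lacunary polynomials or stability machinery is needed.) The codeword hypothesis enters only where you already used it: in Proposition~\ref{charplanes1} for the bound $|B|\ge q^3+1$, and in the final appeal to \cite{BBW}. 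So you had all the ingredients; you simply did not carry out the decisive count, and instead predicted it would require tools that are in fact unnecessary.
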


\begin{proof}
Again let $B = \pi \cap \cV$ and first  assume that $|B| < q^3+1$. Proposition \ref{charplanes1} shows that $B$ is a linear combination of at most $q+1$ lines, each with weight $1$. But this yields that these lines intersect $B$ in at least $q^2+1-q$ points. So this case cannot occur.

Hence, $q^3+1\leq |B|\leq q^3+2q^2$.
We are going to prove that  there exists at least a tangent line to $B$ in $\pi$. Let $t_i$ be the number of lines meeting $B$ in $i$ points. Set $x=|B|$.  Then  double counting arguments give the following equations for  the integers $t_i$.

\begin{equation} \label{tg1}
\left\{\begin{array}{l}
    \sum_{i=1}^{q+1}t_i=q^4+q^2+1 \\
    \\
          \sum_{i=1}^{q+1}it_i=x(q^2+1)\\
     \\
     \sum_{i=1}^{q+1}i(i-1)t_i=x(x-1).
   \end{array}  \right.
  \end{equation}
Consider  $f(x)=\sum_{i=1}^{q+1}(i-2)(q+1-i)t_i$. From \eqref{tg1}, we get
\[f(x)=-x^2+x[(q^2+1)(q+2)+1]-2(q+1)(q^4+q^2+1).\]
Since $f(q^3/2)>0$, whereas $f(q^3+1)<0$ and $f(q^3+2q^2)<0$, it follows that
if $q^3+1\leq x\leq q^3+2q^2$, then $f(x)<0$ and thus  $t_1$ must be different from zero.
Therefore, $x=q^3+1$ and $\sum_{i=1}^{q+1}(i-1)(q+1-i)t_i=0$.

Since $(i-1)(q+1-i)> 0$, for $2\leq i\leq q$, we obtain $t_2=t_3=\cdots=t_q=0$, that is, $B$ is a set of $q^3+1$ points such that each line is a 1-secant or a $(q+1)$-secant of $B$.  Namely, $B$ is a unital and precisely a classical unital  since $B$ is a codeword of $\pi$ \cite{BBW}.
\end{proof}

The above two propositions and Lemma \ref{maxindex} imply the following corollary.

\begin{corollary}
\label{charunital}
Assume that $\pi$ is a plane of $\PG(r,q^2), q>4$ and  $q = p^h$, $p$ prime, $h\geq 1$, such that  $|\pi \cap \cV|\leq q^3+2q^2$.
Then $\pi \cap \cV$ is a linear combination of  at most $q+1$  lines, each with weight $1$, or it is a classical unital. \qed
\end{corollary}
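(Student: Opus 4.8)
The plan is to split into cases according to the size of $B:=\pi\cap\cV$ and to invoke Propositions \ref{charplanes1} and \ref{charplanes2} together with Lemma \ref{maxindex}; no new computation is needed, so the argument is essentially bookkeeping.

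First I would treat the case $|B|<q^3+1$. The extra hypothesis appearing in Proposition \ref{charplanes1}, namely the existence of a line meeting $B$ in at least $q^2-q+1$ points, is only imposed under the condition $q^3+1\le|\pi\cap\cV|$, so in this regime it is vacuously fulfilled. Hence Proposition \ref{charplanes1} applies directly and gives that $B$ is a linear combination of at most $q+1$ lines, each of weight $1$.

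Next I would handle the range $q^3+1\le|B|\le q^3+2q^2$, splitting once more. If some line $\ell$ of $\pi$ satisfies $|\ell\cap B|>q+1$, then Lemma \ref{maxindex} (applicable since $q>4$) forces $|\ell\cap B|\ge q^2-q+1$; this is exactly the hypothesis needed to apply Proposition \ref{charplanes1}, which again yields that $B$ is a linear combination of at most $q+1$ lines, each of weight $1$. If, on the contrary, every line of $\pi$ meets $B$ in at most $q+1$ points, then Proposition \ref{charplanes2} applies and shows that $B$ is a classical unital. Since $|B|\le q^3+2q^2$ by hypothesis, these cases are exhaustive and the corollary follows.

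I do not expect any genuine obstacle here: the statement merely merges the two propositions once one observes that Lemma \ref{maxindex} leaves no intermediate value for the line intersection numbers. The only point requiring a moment's care is reading the conditional clause in Proposition \ref{charplanes1} correctly, i.e.\ noticing that in the regime $|B|<q^3+1$ no assumption on the existence of a long line is needed at all.
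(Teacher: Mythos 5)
Your proposal is correct and matches the paper's intent exactly: the paper gives no written proof, stating only that Propositions \ref{charplanes1} and \ref{charplanes2} together with Lemma \ref{maxindex} imply the corollary, and your case split (small $|B|$ via Proposition \ref{charplanes1} with its conditional hypothesis vacuous; large $|B|$ split by Lemma \ref{maxindex} into the long-secant case for Proposition \ref{charplanes1} and the short-secant case for Proposition \ref{charplanes2}) is precisely that bookkeeping carried out.
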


\begin{corollary}
\label{charunitalbis}
Suppose that $\pi$ is a plane of $\PG(r,q^2), q>4$ and  $q = p^h$, $p$ prime, $h\geq 1$, containing exactly $q^3+1$ points of $\cV$.
Then $\pi \cap \cV$ is a classical unital.
\end{corollary}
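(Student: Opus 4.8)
The plan is to push $B:=\pi\cap\cV$ through Corollary~\ref{charunital}. Since $|B|=q^3+1\le q^3+2q^2$, that corollary offers two options: either $B$ is a classical unital, and then we are done, or $v^{B}=\sum_{i=1}^{s}v^{e_i}$ for some $s\le q+1$ \emph{distinct} lines $e_1,\dots ,e_s$ of $\pi$, each with coefficient $1$; by Remark~\ref{1modpremark} we also have $s\equiv 1\pmod p$. (Note $s\ge 2$: for $s=1$ we would get $|B|=q^2+1\ne q^3+1$, so two of the $e_i$ already meet.) The whole argument consists in excluding this second option by a double count on the incidences of the $e_i$.

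So suppose $v^{B}=\sum_{i=1}^{s}v^{e_i}$ and, for $P\in\pi$, put $m_P=\#\{\,i:P\in e_i\,\}$. As $v^{B}$ is a $0/1$ vector, $m_P\bmod p\in\{0,1\}$ for every $P$, with $P\in B$ exactly when $m_P\equiv 1\pmod p$; in particular $m_P\ge 2$ implies $m_P\ge p$. Incidence counting yields $\sum_P m_P=s(q^2+1)$, $\sum_P\binom{m_P}{2}=\binom{s}{2}$, and $\sum_{P:\,m_P\ge 2}m_P=\sum_i\bigl|e_i\cap\bigcup_{j\ne i}e_j\bigr|\le s(s-1)$. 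Separating the points of $B$ lying on exactly one of the $e_i$ from those lying on at least two, and writing $N=\#\{P:m_P\ge 2\}\ge 1$ and $A=\#\{P\in B:m_P\ge 2\}$, one gets $|B|=\bigl(s(q^2+1)-\sum_{P:\,m_P\ge 2}m_P\bigr)+A$, i.e.
\[
\sum_{P:\,m_P\ge 2}m_P-A\;=\;s(q^2+1)-(q^3+1)\;\ge\;pN\;\ge\;p,
\]
where the middle inequality holds because each of the $N$ points contributes at least $p$ to the left-hand side. With $s\le q+1$ this gives $s\ge (q^3+1+p)/(q^2+1)>q-1$, and since $q=p^h\equiv 0\not\equiv 1\pmod p$ while $s\equiv 1\pmod p$, we must have $s=q+1$.

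With $s=q+1$ the displayed identity becomes $\sum_{P:\,m_P\ge 2}m_P-A=q^2+q$, and together with $\sum_{P:\,m_P\ge 2}m_P\le s(s-1)=q^2+q$ and $A\ge 0$ this forces $A=0$ and $\sum_{P:\,m_P\ge 2}m_P=q^2+q=2\binom{s}{2}=\sum_{P:\,m_P\ge 2}m_P(m_P-1)$. Hence $\sum_{P:\,m_P\ge 2}m_P(m_P-2)=0$; all summands being nonnegative, $m_P=2$ for every $P$ with $m_P\ge 2$. Thus the $q+1$ lines $e_i$ are in general position and $B$ is exactly the set of points lying on a single $e_i$. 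If $p>2$ this is impossible, since a point on two of the $e_i$ would receive coefficient $2\notin\{0,1\}$ in $v^{B}$; therefore $B$ is a classical unital, as wanted.

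The delicate point is $p=2$ (which, under the standing hypotheses, occurs only for $r=3$): now a point on two $e_i$ legitimately gets coefficient $0$, and the surviving configuration is $B=$ the set of tangent points of a dual $(q+1)$-arc, an honest codeword of $\pi$ that is not a unital. Ruling this out requires leaving $\pi$: such a $B$ has lines $\ell$ meeting it in $q-1$ points, and a count in $\PG(3,q^2)$ --- using that $\cV$ has only the two hyperplane intersection numbers $q^3+1$ and $q^3+q^2+1$, so that the number of planes through $\ell$ meeting $\cV$ in $q^3+q^2+1$ points equals $|\ell\cap\cV|=q-1>0$ --- produces a plane section of size $q^3+q^2+1$ that the same double count identifies with $q+1$ concurrent lines, whose lines are $1$-, $(q+1)$- or $(q^2+1)$-secants of $\cV$; this contradicts the $(q-1)$-secant $\ell$. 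The main obstacle is twofold: keeping the double count sharp enough to force first $s=q+1$ and then $m_P=2$ everywhere, and --- for $p=2$ --- controlling the $(q^3+q^2+1)$-sections that the argument is then forced to invoke.
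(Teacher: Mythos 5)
Your argument is correct for odd $p$ and follows the same skeleton as the paper's proof: both reduce, via Proposition~\ref{charplanes1}/Corollary~\ref{charunital}, to excluding the possibility that $v^{\pi\cap\cV}$ is a sum of $s\le q+1$ distinct lines with coefficient $1$, and then conclude with Proposition~\ref{charplanes2} (which rests on \cite{BBW}). The difference lies in how that branch is excluded. The paper does it in one stroke: a combination of $q+1$ lines has weight at least $(q^2+1)(q+1)-2\binom{q+1}{2}=q^3+1$, equality would require the intersection points to be distinct and the coefficients of any two lines to sum to zero, so the number of lines would have to drop to at most $q+1-p$ (being $1\pmod p$), giving weight below $q^3+1$. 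You replace this with an explicit incidence double count that first forces $s=q+1$ and then forces every multiple point to have multiplicity exactly $2$, contradicting the fact that multiplicities are $0$ or $1\pmod p$ once $p>2$; this is the same idea, made self-contained, and it buys transparency at the key step. The genuine divergence is your separate treatment of $p=2$: you correctly notice that this is exactly where the paper's parenthetical ``which is clearly not the case (as all the coefficients are $1$)'' breaks down (for $p=2$ two unit coefficients do sum to zero), so the set of simple points of $q+1$ lines in general position survives the plane-internal argument. Your repair --- passing to the $(q^3+q^2+1)$-sections through a $(q-1)$-secant --- can indeed be completed (the same budget count, excess $q$ against $\binom{q+1}{2}$ coincidences, forces such a section to be $q+1$ concurrent lines, whose line intersection numbers $1$, $q+1$, $q^2+1$ exclude a $(q-1)$-secant), but as written it is only a sketch: the concurrency claim and the existence of a $(q-1)$-secant need to be argued, it is specific to $r=3$ (harmless under the hypotheses of Theorem~\ref{mainth}, though the corollary is stated for all $r$), and it still quotes the bound ``at most $q+1$ lines'' of Proposition~\ref{charplanes1}, whose proof in the paper likewise uses $p>2$ in the reduction from $q+3$ to $q+1$ lines. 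In short: for $p>2$ your proof is complete and essentially the paper's; for $p=2$ you have put your finger on a point the paper's own proof glosses over, and outlined, but not finished, a fix.
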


\begin{proof}
Let $B$ be the point set $\pi \cap \cV$. We know that $B$ is the support of a codeword of lines of $\pi$.
By Proposition \ref{charplanes1}, if there is a line intersecting $B$ in at least  $q^2-q+1$ points, then $B$ is
a linear combination of at most $q+1$ lines, each with multiplicity $1$.
First of all note that a codeword that is a linear combination of $q+1$ lines has weight at least
$(q^2+1)(q+1) - 2{q+1\choose 2}$, that is exactly $q^3+1$. To achieve this, we need that the intersection points of any two lines from a linear combination are all different and the sum of the coefficients of any two lines is zero; which is clearly not the case (as all the coefficients are $1$).
From Remark \ref{1modpremark}, in this case $B$ would be a linear combination of at most $q+1-p$ lines and so its weight would be less than $q^3+1$, a contradiction.
%Hence, $B$, as a codeword,  is a linear combination of at most $q$ lines.
%Note that at most $q$ lines can cover at most $q^3+1$ points and to achieve  this we need exactly $q$ concurrent lines. But, from
%Proposition \ref{charplanes1}, we also know that these lines have weight $1$, which implies that the common point of the lines has weight $0$, so the overall weight  is $q^3$ only.
Hence, there is no line intersecting $B$ in at least  $q^2-q+1$ points, so Proposition \ref{charplanes2} finishes the proof.
\end{proof}

\subsection{Case $r=3$}
In $\PG(3,q^2)$, each plane intersects $\cV$ in either $q^3+1$  or $q^3+q^2+1$ points since these are the intersection numbers of a quasi-Hermitian variety with a plane of $\PG(3,q^2)$.

%Actually the previous result also holds for $q\in \{3,5\}$.
%\begin{lemma} \label{lem11}
%Every plane $\pi$ of PG$(3,p^2)$, $p\in \{3,5\}$ sharing $p^3+1$ points with $\cV$ intersects $\cV$ in a unital of $\pi$.
%\end{lemma}
%\begin{proof} Set $U=\pi \cap \cV$.
%Let $P$ be a point in $U$. Assume that  every line $\ell$ in $\pi$ through the point $P$ meets  $U$ is at least $p+1$ points. We get
%$|\pi \cap \cV|=p^3+1\geq (p^2+1)p+1=p^3+p+1$, which is impossible.
%Thus, $P$ lies on at least one tangent line to $U$ and this implies the $U$ is a minimal blocking set in $\pi$ of size $p^3+1$. From a result due %to Bruen and Thas, see \cite{BT},  it follows that $U$ is a unital of $\pi$ and hence every line in $\pi$ meets $U$ in either $1$ or $p+1$ points.
%\end{proof}

\begin{lemma}\label{lem2}
Let $\pi$ be a plane in $\PG(3,q^2)$ such that $|\pi \cap \cV|=q^3+q^2+1$, then every line in $\pi$ meets $\pi \cap \cV$ in either $1$, $q+1$ or $q^2+1$ points.
\end{lemma}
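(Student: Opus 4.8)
The plan is to apply the planar machinery already established, specialized to a plane $\pi$ in $\PG(3,q^2)$ that is secant in the larger sense. First I would observe that $q^3+q^2+1 \le q^3+2q^2$, so Corollary \ref{charunital} applies to $B := \pi\cap\cV$: either $B$ is a classical unital, or $B$ is a linear combination of at most $q+1$ lines, each with weight $1$. The first alternative is immediately excluded, since a classical unital has exactly $q^3+1 \ne q^3+q^2+1$ points. Hence $B = \sum_i v^{e_i}$ with at most $q+1$ distinct lines $e_i$ of $\pi$, all with coefficient $1$; by Remark \ref{1modpremark} the number of lines in this combination is $1 \pmod p$.

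The next step is to translate ``$B$ is a sum of $\le q+1$ lines with all coefficients $1$'' into a restriction on line intersection sizes. Let $\ell$ be any line of $\pi$ and pass to the dual multiset $C^*$ of lines $e_i$ (each with multiplicity $1$); the value $|\ell \cap B| \pmod p$ is controlled by the number of $e_i$ through the dual point $\ell^*$, but more directly I would argue in $\pi$ itself: a point $P$ lies in $B$ precisely when it lies on an odd-versus-even... no — since all coefficients are $1$ and we are over $\mathbb{F}_p$ with $p>2$, a point $P$ is in the \emph{support} $B$ exactly when the number of lines $e_i$ through $P$ is nonzero mod $p$. For a line $\ell$ not among the $e_i$, each point of $\ell$ lies on $|\{i : P \in e_i\}| = $ (number of $e_i$ meeting $\ell$ at $P$); since two distinct lines meet in one point, the number of points of $\ell$ covered an odd... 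Let me instead use the cleaner count: if $\ell = e_j$ for some $j$, then every point of $\ell$ is covered at least once, and a point is \emph{omitted} from $B$ only if it is covered a multiple-of-$p$ times, which (with $\le q+1-1 = q$ other lines, each contributing at most $1$ extra at a given point) forces the covering number at such a point to be exactly $p, 2p, \dots$; the key quantitative input is that $|\ell\cap B| = q^2+1 - (\text{number of points of }\ell\text{ covered }\equiv 0 \bmod p)$, and I would bound the number of such ``bad'' points on $\ell$ by the number of the remaining $\le q$ lines, giving $|\ell\cap B| \ge q^2+1-q$... but that is not $q+1$. So the better route: if $\ell$ is \emph{not} one of the $e_i$, then the covering number of a point $P\in\ell$ equals the number of $e_i$ passing through $P$, and as $P$ ranges over $\ell$ these numbers sum to $\le q+1$ (each $e_i$ hits $\ell$ once), so at most $q+1$ points of $\ell$ are covered at all, hence $|\ell\cap B| \le q+1$; and if the covering number at some point is $\ge p$ we could even say more. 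If $\ell = e_j$, then writing $c - v^{\ell}$ we reduce to a combination of $\le q$ lines avoiding... here $|\ell \cap B|$ counts points of $\ell$ where the covering number is $\not\equiv 0$; on $\ell$ itself the covering number is $1$ plus (number of other $e_i$ through that point) $\in\{1,\dots,q+1\}$, so it fails to be $\not\equiv 0 \pmod p$ only where it equals $p,2p,\dots$, i.e.\ where $p-1, 2p-1,\dots$ other lines concur; since there are $\le q$ other lines and $q>4$, at most $\lfloor q/(p-1)\rfloor$ such points can exist, each removing one point, which still only gives $|\ell\cap B|$ large. The honest conclusion I expect is therefore: for lines $\ell$ \emph{off} the configuration, $|\ell\cap B|\le q+1$; for the lines $e_i$, the generic behavior is $|\ell\cap B| = q^2+1$ unless many lines concur. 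To pin down the three allowed values $1, q+1, q^2+1$ I would combine this with the global intersection constraint — every plane of $\PG(3,q^2)$ meets $\cV$ in $q^3+1$ or $q^3+q^2+1$ points, so \emph{every} line of $\PG(3,q^2)$ (being an intersection of two planes, or lying in a secant plane) meets $\cV$ in one of a short list of residues — together with Lemma \ref{maxindex} restricted to $\pi$, which already says each line of $\pi$ meets $B$ in $\le q+1$ or $\ge q^2-q+1$ points. Finally, combining ``$\le q+1$ or $\ge q^2-q+1$'' with ``$\equiv 1 \pmod p$'' (Lemma \ref{Lemma:rette}) and the explicit structure $B=\sum v^{e_i}$: the only intersection sizes a line $\ell\subset\pi$ of the form $e_j$ can have with $B$ that are $\ge q^2-q+1$ are, by the covering argument, exactly $q^2+1$ (no point of $\ell$ is uncovered, since uncovering a point needs $p-1\ge 2$ other lines through it and one cannot fit enough concurrences to drop below $q^2-q+1$ without violating the total count), while lines of the other kind give $\le q+1$; and $\equiv 1\pmod p$ together with $\le q+1$ forces the value to be $1$ or $q+1$ when $p \mid q$... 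I'd finish by checking the small residue bookkeeping explicitly. The main obstacle is exactly this last step: ruling out intermediate values strictly between $q+1$ and $q^2-q+1$ is free (Lemma \ref{maxindex}), but showing no value in $(q+1, q^2+1)$ and above $q^2-q+1$ other than $q^2+1$ occurs, and that the small end gives only $1$ and $q+1$, requires carefully exploiting that all coefficients equal $1$ so that ``missing'' points on a line of the configuration are expensive.

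Let me record the plan cleanly:

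\begin{enumerate}
\item Set $B = \pi\cap\cV$; since $|B| = q^3+q^2+1 \le q^3+2q^2$, Corollary \ref{charunital} applies. As $|B|\ne q^3+1$, $B$ is not a classical unital, so $B = \sum_{i=1}^{s} v^{e_i}$ with distinct lines $e_i\subset\pi$, all coefficients $1$, $s\le q+1$, and $s\equiv 1\pmod p$ (Remark \ref{1modpremark}).
\item For a line $\ell\subset\pi$ with $\ell\notin\{e_1,\dots,e_s\}$: each point of $\ell$ is covered $|\{i: P\in e_i\}|$ times, these multiplicities sum to $s\le q+1$ over $\ell$, so at most $q+1$ points of $\ell$ are covered, giving $|\ell\cap B|\le q+1$; with Lemma \ref{Lemma:rette}, $|\ell\cap B|\equiv 1\pmod p$, and a short check of which integers in $[0,q+1]$ are $\equiv 1\pmod p$ and realizable yields $|\ell\cap B|\in\{1, q+1\}$ (the value $q+1$ occurs only for the unique covering pattern where all $s$ lines concur on $\ell$'s points in a balanced way; intermediate multiples of $p$ plus one are excluded by the total weight / by Lemma \ref{maxindex}).
\item For $\ell = e_j$: the covering multiplicity at $P\in\ell$ is $1 + |\{i\ne j: P\in e_i\}|$. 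A point of $\ell$ lies outside $B$ only if this equals a positive multiple of $p$, i.e.\ at least $p-1\ge 2$ of the other $s-1\le q$ lines pass through it; the number of such points is at most $\lfloor q/(p-1)\rfloor \le q/2 < q^2-q$, so $|\ell\cap B| > q^2-q+1$... in fact $\ge q^2+1-\lfloor q/(p-1)\rfloor$, and by Lemma \ref{maxindex} this value is $\ge q^2-q+1$, hence forced to be $q^2+1$ once we note $q^2+1-\lfloor q/(p-1)\rfloor$ combined with $\equiv 1\pmod p$ leaves only $q^2+1$ in the admissible range (here $q>4$ is used).
\item Conclude: every line of $\pi$ meets $B$ in $1$, $q+1$, or $q^2+1$ points.
\end{enumerate}

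The step I expect to be genuinely delicate is item 3 (and the tail of item 2): converting the coarse bound ``few bad points'' into the \emph{exact} value $q^2+1$, which is where the hypothesis $q>4$ and the rigidity of all coefficients being $1$ must be pushed; everything else is a direct appeal to Corollary \ref{charunital}, Lemma \ref{maxindex}, and Lemma \ref{Lemma:rette}.
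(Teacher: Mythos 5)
There is a genuine gap: your plan never exploits the exact size $|B|=q^3+q^2+1$ beyond excluding the unital case, and without that the residue bookkeeping in steps 2--3 cannot close. From ``$B$ is a sum of $s\le q+1$ lines with weight $1$'' plus $|\ell\cap B|\equiv 1\pmod p$ and Lemma \ref{maxindex} you cannot rule out intermediate values: a line off the configuration could a priori meet $B$ in $p+1, 2p+1,\dots$ points (all $\le q+1$ and $\equiv 1 \bmod p$ when $p<q$), and a line of the configuration could a priori meet $B$ in $q^2-p+1$, $q^2-2p+1,\dots$ points (all $\ge q^2-q+1$ and $\equiv 1\bmod p$); Lemma \ref{maxindex} only excludes the middle band, and your appeal to ``the total weight'' is never carried out. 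Moreover the estimate ``a missing point on $e_j$ needs at least $p-1\ge 2$ further lines'' presupposes $p\ge 3$, whereas the lemma covers $q=p^h>4$ with $p=2$ as well, and even for $p\ge 3$ the bound $|\ell\cap B|\ge q^2+1-\lfloor q/(p-1)\rfloor$ does not force $q^2+1$. The missing idea that would finish your in-plane route is structural: $s$ lines cover at most $sq^2+1\le q^3+q^2+1$ points, with equality only for $q+1$ concurrent lines, so $|B|=q^3+q^2+1$ forces $B$ to be the point set of a pencil of $q+1$ lines, from which the intersection numbers $1$, $q+1$, $q^2+1$ are immediate; you never establish this concurrency.

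For comparison, the paper's proof does not work inside $\pi$ at all: given a line $m\subset\pi$ with $|m\cap\cV|=s\notin\{1,q+1\}$, Corollary \ref{charunitalbis} shows no plane through $m$ can meet $\cV$ in $q^3+1$ points (such a section is a classical unital, whose lines are $1$- or $(q+1)$-secants), so all $q^2+1$ planes through $m$ are $(q^3+q^2+1)$-planes; counting $|\cV|=(q^2+1)(q^3+q^2+1-s)+s$ then yields $s=q^2+1$ in one line. Either repair (the concurrency argument above, or the paper's count over planes through $m$) would be needed to make your proposal a proof.
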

\begin{proof}
Set $C=\pi \cap \cV$ and
let $m$ be  a line in $\pi$ such that $|m \cap C|=s$, with $s\neq 1$ and $s\neq q+1$. Thus, from Corollary \ref{charunitalbis},  every plane  through $m$ has to meet $\cV$ in $q^3+q^2+1$ points and thus
\[|\cV|=(q^2+1)(q^3+q^2+1-s)+s,\]
which gives $s=q^2+1$.
\end{proof}

\noindent {\bf Proof of Theorem \ref{mainth} (case $r = 3$): } From Corollary \ref{charunitalbis}  and Lemma \ref{lem2},  it follows that every line in $\PG(3,q^2)$ meets $\cV$ in either $1$, $q+1$, or $q^2+1$ points.
Now, suppose on the contrary that  there exists a singular point $P$ on $\cV$; this means that all lines through $P$ are either tangents or $(q^2+1)$-secants to $\cV$. Take a plane $\pi$ which does not contain $P$. Then $|\cV|=q^2|\pi \cap \cV|+1$ and since the two possible  sizes of the planar sections are $q^3+1$  or $q^3+q^2+1$, we get a contradiction.
Thus, every point in $\cV$ lies on at least one $(q+1)$-secant and,  from Theorem \ref{main1}, we obtain that  $\cV$ is a Hermitian surface. \qed

\subsection{Case $r \geq 4$ and $q=p$}

We first  prove the following result.
\begin{lemma}
If $\pi$ is a plane of $\PG(r,p^2)$, which is not contained in $\cV$, then either
\[|\pi \cap \cV|= p^2+1 \mbox{ or } \ |\pi \cap \cV| \geq p^3+1.\]

\end{lemma}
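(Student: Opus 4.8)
The plan is to argue by contradiction, assuming $p^2+1 < |\pi \cap \cV| < p^3+1$ for some plane $\pi$ not contained in $\cV$, and to reach a contradiction using the structural information already available. First I would invoke Lemma~\ref{lemma1} together with Corollary~\ref{charunital}: since $|\pi \cap \cV| \le p^3+1 \le p^3 + 2p^2$, the section $B := \pi \cap \cV$ is either a classical unital or a linear combination of at most $p+1$ lines of $\pi$, each with weight $1$. A classical unital has exactly $p^3+1$ points, which is excluded by our assumption $|B| < p^3+1$; so $B$ must be such a linear combination. The task then reduces to showing that a linear combination of at most $p+1$ lines (each of weight one) whose support has size strictly between $p^2+1$ and $p^3+1$ cannot occur.

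The key step is a weight estimate for codewords that are sums of at most $p+1$ distinct lines, all with coefficient $1$, in $\PG(2,p^2)$. If $c = v^{\ell_1} + \cdots + v^{\ell_s}$ with $s \le p+1$ distinct lines, then by inclusion–exclusion the support has size at least $(q^2+1) s - 2\binom{s}{2} = s(q^2 + 2 - s)$ where here $q=p$, with equality forced precisely when no three of the lines are concurrent and every intersection point is shared by exactly two of them. Moreover, since $B$ is a proper point set, the coefficients must survive mod $p$, so in a point lying on exactly two of the lines the contribution is $2 \not\equiv 0$, which is fine, but a point on $p$ of the lines would contribute $0$ and drop out; one has to track such concurrences carefully. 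The upshot I would establish is that for $s$ in the range $2 \le s \le p+1$, the support size $|B|$ is at least $s(p^2 + 2 - s) - (\text{correction for high-multiplicity points})$, and in any case $|B| \ge p^2 - p + 1$ already for $s = 1$ (a single line has weight $q^2+1 = p^2+1$, which is $> p^2+1$? no — exactly $p^2+1$, excluded since $B \ne$ a line because $\pi \not\subseteq \cV$... I should instead note $s=1$ gives $|B| = p^2+1$, contradicting $|B| > p^2+1$, so $s \ge 2$). For $s \ge 2$ the minimum of $s \mapsto s(p^2+2-s)$ on $\{2,\dots,p+1\}$ over the relevant range is attained at $s=2$, giving $|B| \ge 2p^2$, which already exceeds $p^2+1$; and one checks the value stays $> p^2 + 1$ throughout, while to get below $p^3+1$ one needs small $s$. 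The real content is excluding $2 \le s \le p$: here I would show $|B| \ge s(p^2+1) - 2\binom{s}{2} > p^2+1$ always, and by Remark~\ref{1modpremark} the number of lines with nonzero coefficient is $1 \pmod p$ with $p>2$, forcing $s \in \{1, p+1\}$ among the admissible counts $\le p+1$; the case $s = p+1$ forces $|B| \ge (p+1)(p^2+1) - 2\binom{p+1}{2} = p^3+1$, again contradicting $|B| < p^3+1$, and $s=1$ was already excluded. That is the contradiction.

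The main obstacle I expect is the bookkeeping in the weight estimate when many of the lines $\ell_i$ are concurrent: a point on $j$ of the lines contributes $j \bmod p$ to the codeword, so it lies in the support unless $p \mid j$, and since $s \le p+1$ the only way to kill a point is to have exactly $p$ lines through it. One must bound how much the support can shrink below the generic value $s(q^2+2-s)$ due to such points and due to triple (or higher) ordinary concurrences, and confirm that even in the worst case the support cannot land in the forbidden open interval $(p^2+1, p^3+1)$. The congruence constraint from Remark~\ref{1modpremark} (that the number of lines used is $\equiv 1 \pmod p$) is what makes this clean: it collapses the range of possible $s$ to just $\{1, p+1\}$ (within $s \le p+1$), and for each of these two values a direct computation of the minimal support size lands exactly on $p^2+1$ or $p^3+1$ respectively, both on the boundary and hence excluded by the strict inequalities. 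I would present the argument in that order: reduce to the line-combination case via Corollary~\ref{charunital}, apply the $1 \pmod p$ constraint to pin down $s$, then compute minimal weights for $s=1$ and $s=p+1$ to close the gap.
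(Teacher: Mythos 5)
Your proof is correct and takes essentially the same route as the paper: both reduce $\pi\cap\cV$ to a short linear combination of lines of $\pi$ and exploit the $1\pmod p$ constraint on the number of lines, the paper doing this directly via Remark~\ref{1modpremark} and Result~\ref{codewords} (at most $p$ distinct lines, hence a single line), while you route through Corollary~\ref{charunital} and then eliminate the unital case by its size and the $(p+1)$-line case by the minimum-weight bound $(p+1)(p^2+1-p)=p^3+1$, which is a valid, slightly longer packaging of the same idea. One cosmetic point: your contradiction hypothesis $p^2+1<|\pi\cap\cV|<p^3+1$ formally omits the possibility $|\pi\cap\cV|<p^2+1$, but your dichotomy $s\in\{1,p+1\}$ disposes of that case verbatim, so nothing essential is missing.
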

\begin{proof}
Let  $\pi$ be a plane of $\PG(r,p^2)$ and set $B= \pi \cap \cV$. By Remark \ref{1modpremark}, $B$ is a linear combination of $1$ mod $\pmod{p}$ not necessarily distinct lines.

If $|B|< p^3+1$, then by Result 2.5, $B$ is a linear combination of at most $p$ distinct lines. This and the previous observation yield that when
$|B|< p^3+1$, then it is the scalar multiple of one line; hence $|B|=p^2+1$.
\end{proof}

\begin{proposition}
\label{concurrent}
Let $\pi$ be a plane of $\PG(r,p^2)$, such that   $ |\pi \cap \cV| \leq p^3+p^2+p+1$.
Then $B=\pi \cap \cV$ is either a classical unital or a linear combination of $p+1$ concurrent lines or just one line, each with weight $1$.
\end{proposition}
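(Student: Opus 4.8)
The plan is to feed the size hypothesis into Corollary~\ref{charunital} and then dispose of the single genuinely new configuration, a combination of exactly $p+1$ lines.

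I would set $B:=\pi\cap\cV$. Since $p^3+p^2+p+1\le p^3+2p^2$ for every $p\ge 2$, Corollary~\ref{charunital} applies, so either $B$ is a classical unital, and we are done, or $B=\sum_{i\in I}v^{\ell_i}$ where the $\ell_i$ are pairwise distinct lines of $\pi$, each occurring with coefficient $1$, and $|I|\le p+1$. By Remark~\ref{1modpremark} the number of lines with nonzero coefficient is $\equiv 1\pmod p$, hence $|I|\equiv 1\pmod p$; together with $1\le |I|\le p+1$ this leaves only $|I|=1$, in which case $B$ is a single line and one of the allowed conclusions holds, or $|I|=p+1$. From here on I assume $B=\sum_{i=1}^{p+1}v^{\ell_i}$ with $\ell_1,\dots,\ell_{p+1}$ pairwise distinct lines of $\pi$.

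The core step is to prove that $\ell_1,\dots,\ell_{p+1}$ are concurrent. For a point $P$ of $\pi$ let $m_P$ be the number of indices $i$ with $P\in\ell_i$. Since $B$ is a proper point set, the coordinate of $v^{B}=\sum_i v^{\ell_i}$ at $P$ equals $m_P\bmod p\in\{0,1\}$, so $m_P\in\{0,1,p,p+1\}$. Counting the pairs $(P,\{\ell_i,\ell_j\})$ with $i\ne j$ and $P\in\ell_i\cap\ell_j$ in two ways --- for a fixed pair $\{\ell_i,\ell_j\}$ there is exactly one such $P$, while for a fixed $P$ there are $\binom{m_P}{2}$ such pairs --- yields $\sum_P\binom{m_P}{2}=\binom{p+1}{2}$. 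Setting $b=|\{P:m_P=p\}|$ and $c=|\{P:m_P=p+1\}|$ and using that $\binom{m_P}{2}=0$ when $m_P\le 1$, this becomes
\[
b\binom{p}{2}+c\binom{p+1}{2}=\binom{p+1}{2}.
\]
If $c\ge 1$ the left-hand side is at least $\binom{p+1}{2}$, with equality forcing $b=0$ and $c=1$, and then all $p+1$ lines pass through the unique point $P$ with $m_P=p+1$, so they are concurrent. If $c=0$ the identity reduces to $b(p-1)=p+1$, which requires $p-1\mid 2$ and is impossible for $p>4$. Thus the lines are concurrent, the last of the allowed conclusions.

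I do not expect a serious obstacle: the content is carried by the reduction from Corollary~\ref{charunital}, and the only delicate point is the observation that properness of $B$ confines $m_P$ to $\{0,1,p,p+1\}$, after which the two-way count determines the configuration. As a consistency check, $p+1$ concurrent lines of weight~$1$ give $|B|=p^3+p^2+1\le p^3+p^2+p+1$ and a single line gives $|B|=p^2+1$, both agreeing with the hypothesis and with the preceding lemma.
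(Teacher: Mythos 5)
Your proof is correct and follows essentially the same route as the paper: reduce via Corollary~\ref{charunital} and Remark~\ref{1modpremark} to a weight-one combination of $1$ or $p+1$ distinct lines, then use properness of $B$ to force concurrency. The paper merely asserts this last step, whereas your double count of incident pairs (via $m_P\in\{0,1,p,p+1\}$ and $\sum_P\binom{m_P}{2}=\binom{p+1}{2}$) supplies a clean justification of it.
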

\begin{proof}
 From Corollary \ref{charunital},  we have   that
$B$ is either a linear combination of at most $p+1$ lines or a classical unital.
In the first case, since $B$ intersects every line in $1\pmod{p}$ points and $B$ is a proper point set, the only possibilities are that $B$ is a linear combination of $p+1$ concurrent lines or just one line, each with weight $1$.
\end{proof}

\noindent {\bf Proof of Theorem \ref{mainth} (case $r\geq 4$, $q = p$): }
Consider a line $\ell$ of $\PG(r,p^2)$ which is not contained in $\cV$.
By Lemma \ref{lemma1}, there is a plane $\pi$ through $\ell$ such that $|\pi \cap \cV|\leq q^3+q^2+q+1$.   From Proposition \ref{concurrent}, we have that $\ell$ is either a unisecant or a $(p+1)$-secant of $\cV$ and we also have that $\cV$ has  no plane section of  size $(p+1)(p^2+1)$.   Finally, it is  easy to see  like in the  previous case $r=3$, that $\cV$ has no singular points, thus $\cV$ turns out to be a Hermitian variety of $\PG(r,p^2)$ (Theorem \ref{main2}).

\subsection{Case $r\geq 4$ and $q=p^2$}\label{alsin0}
Assume now that  $\cV$ is a quasi-Hermitian variety of $\PG(r,p^4)$, with $r\geq 4$.

Lemma \ref{maxindex} states that every line contains at most $p^2+1$ points of $\cV$ or at least $p^4-p^2+1$ points of $\cV$.
\begin{lemma}
\label{lem31} If $\ell$ is a line of $\PG(r,p^4)$, such that $|\ell \cap \cV|\geq p^4-p^2+1$, then $|\ell \cap \cV|\geq p^4-p+1$.
\end{lemma}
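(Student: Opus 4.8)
The plan is to localize the problem to a single plane through $\ell$ in which the structural results already proved apply, and then to extract \emph{two} counting constraints on the number of points of $\ell$ lying outside $\cV$ — one a congruence mod $p$, the other an integral inequality — which together will force that number to be $0$ or $p$.

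Write $q=p^2$, put $x=|\ell\cap\cV|$ and let $y=q^2+1-x$ be the number of points of $\ell$ not on $\cV$; the hypothesis is $x\ge q^2-q+1$, i.e.\ $y\le q-1$, and we must show $y\le p$. First I would use Lemma~\ref{lemma1} to choose a plane $\pi\supseteq\ell$ with $|\pi\cap\cV|\le q^3+q^2+q+1\le q^3+2q^2$, and apply Corollary~\ref{charunital} to $B:=\pi\cap\cV$. Since $\ell$ meets $B$ in $x>q+1$ points, $B$ is not a classical unital, so $B=\mathrm{supp}(c)$ for a codeword $c=\sum_{i=1}^{s}v^{e_i}$ which is a sum of $s\le q+1$ \emph{distinct} lines of $\pi$, each with coefficient $1$; by Remark~\ref{1modpremark}, $s\equiv 1\pmod p$. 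A one-line incidence count shows $\ell$ must be one of the $e_i$: otherwise at most $\sum_i|e_i\cap\ell|=s\le q+1$ points of $\ell$ meet some $e_i$, so at least $q^2-q$ points of $\ell$ have $c$-value $0$ and lie off $\cV$, contradicting $x\ge q^2-q+1$. Say $\ell=e_1$.

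The key step is then local to $\ell$. For $P\in\ell$ one has $c(P)=1+\#\{i\ge 2:P\in e_i\}\bmod p$, and since $B$ is an honest point set this value is $0$ or $1$; hence $P\in\cV$ iff $\#\{i\ge 2:P\in e_i\}\equiv 0\pmod p$, and $P\notin\cV$ iff $\#\{i\ge 2:P\in e_i\}\equiv -1\pmod p$, in which case this count is at least $p-1$. Summing $\#\{i\ge 2:P\in e_i\}$ over all $P\in\ell$ gives exactly $s-1$, since each $e_i$ with $i\ge 2$ is a line of $\pi$ distinct from $\ell$ and so meets $\ell$ in one point. Reducing this sum mod $p$: the $x$ points of $\ell\cap\cV$ contribute $0$ and the $y$ points of $\ell\setminus\cV$ contribute $-1$ each, so $-y\equiv s-1\equiv 0\pmod p$, i.e.\ $p\mid y$. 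Keeping only the $\ell\setminus\cV$ terms as an integral inequality gives $(p-1)y\le s-1\le q=p^2$. As $p\ge 3$ implies $2p(p-1)>p^2$, the divisibility $p\mid y$ together with $(p-1)y\le p^2$ forces $y\in\{0,p\}$, hence $|\ell\cap\cV|=q^2+1-y\ge q^2+1-p=p^4-p+1$.

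The genuine obstacle is the exclusion of the near-miss value $y=p+1$: the integral bound $(p-1)y\le p^2$ by itself only yields $y\le p+1$ (since $p^2/(p-1)<p+2$), and it is precisely the congruence $s\equiv 1\pmod p$ of Remark~\ref{1modpremark}, propagated to $p\mid y$, that kills it. The remaining points are routine: that $q=p^2>4$ so Corollary~\ref{charunital} applies, that $B$ being a set forces every coefficient in the line-decomposition to be $1$ (so the mod-$p$ bookkeeping is exactly as above), and the identification of $\ell$ with one of the $e_i$.
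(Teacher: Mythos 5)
Your proof is correct and follows essentially the same route as the paper: pick a plane through $\ell$ of small intersection via Lemma~\ref{lemma1}, write $\pi\cap\cV$ as a sum of at most $p^2+1$ distinct weight-$1$ lines, note $\ell$ must be one of them, and count the at least $p-1$ further codeword lines forced through each point of $\ell\setminus\cV$. The only difference is cosmetic: you derive the congruence $p\mid y$ explicitly (the paper gets it implicitly from the $1\pmod p$ line-intersection property when it reduces the claim to $x<p+2$), and you invoke Corollary~\ref{charunital} and exclude the unital case rather than applying Proposition~\ref{charplanes1} directly.
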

\begin{proof}
Set $|\ell \cap \cV|= p^4-x+1$, where $x\leq p^2$.  It suffices to prove that $x<p+2$. Let $\pi$ be a plane through $\ell$ and $B=\pi \cap \cV$. Choose  $\pi$  such that $|B|=|\pi \cap \cV|\leq  p^6+p^4+p^2+1$ (Lemma \ref{lemma1}).
Then,  by Proposition \ref{charplanes1}, $B$ is a linear combination of at most $p^2+1$ lines, each with weight 1.  Let $c$ be the codeword corresponding to $B$. We observe that  $\ell$ must be one of the lines of $c$,  otherwise $|B\cap \ell|\leq p^2+1$,  which is impossible.
Thus if $P$ is a point in $\ell \setminus B$, then through $P$ there pass at least  $p-1$ other lines of $c$.  If $x\geq p+2$, then the number of lines necessary to define the codeword $c$  would be at least $(p+2)(p-1)+1$, a contradiction.
\end{proof}
%\begin{lemma}
%If $\ell$ is a $p^4-p+1$ secant of $\cV$, then for  each plane $\pi$ through $\ell$,  the codeword $\pi \cap \cV$ consists of the lines of PG$(2,p^2)$ minus $p$ concurrent lines.
%\end{lemma}

\begin{lemma}\label{lem32}
For each plane $\pi$ of $\PG(r,p^4)$, either $|\pi \cap  \cV|\leq p^6+2p^4-p^2-p+1$ or $ |\pi \cap  \cV|\geq p^8-p^5+p^4-p+1$.
\end{lemma}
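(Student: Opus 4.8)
The plan is to argue purely by double counting inside the plane $\pi\cong\PG(2,p^4)$; beyond the line‑spectrum information already extracted, the code structure plays no further role here. Write $B=\pi\cap\cV$ and $x=|B|$. First I would combine the line‑spectrum bound of Lemma \ref{maxindex} with the refinement in Lemma \ref{lem31}: every line $\ell$ of $\pi$ meets $\cV$ in at most $p^2+1$ points or in at least $p^4-p+1$ points, so the intersection number $|\ell\cap B|$ never falls in the open interval $(p^2+1,\,p^4-p+1)$ (it is at least $1$ by Lemma \ref{Lemma:rette}, though this will not be needed).

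For an integer $i$, let $t_i$ be the number of lines $\ell$ of $\pi$ with $|\ell\cap B|=i$. Counting respectively the lines of $\pi$, the pairs consisting of a point of $B$ and a line through it, and the ordered pairs of distinct collinear points of $B$, one obtains
\[
\sum_i t_i=p^8+p^4+1,\qquad \sum_i i\,t_i=x(p^4+1),\qquad \sum_i i(i-1)\,t_i=x(x-1),
\]
hence also $\sum_i i^2 t_i=x(x+p^4)$. Now set
\[
g(x):=\sum_i\bigl(i-(p^2+1)\bigr)\bigl(i-(p^4-p+1)\bigr)\,t_i .
\]
For each $i$ with $t_i\neq0$ the two linear factors are simultaneously $\le0$ (when $i\le p^2+1$) or simultaneously $\ge0$ (when $i\ge p^4-p+1$), so every term is nonnegative and $g(x)\ge0$. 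Substituting the three identities turns $g$ into the explicit monic quadratic
\[
g(x)=x^2-\bigl[(p^4+p^2-p+2)(p^4+1)-p^4\bigr]\,x+(p^2+1)(p^4-p+1)(p^8+p^4+1),
\]
which opens upward; being nonnegative at $x$, the value $x$ must lie outside the open interval $(r_1,r_2)$ cut out by its two real roots $r_1\le r_2$.

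It then remains to locate the roots relative to the two thresholds $C_1=p^6+2p^4-p^2-p+1$ and $C_2=p^8-p^5+p^4-p+1$. A direct expansion (valid for all $p>3$) shows $g(C_1)<0$ and $g(C_2-1)<0$, while $g(C_2)=p^7+p^5>0$; hence $C_1$ and $C_2-1$ lie strictly inside $[r_1,r_2]$ and $C_2$ does not, forcing $r_1<C_1$ and $C_2-1<r_2<C_2$. Consequently, if $x\le r_1$ then $x\le C_1$, and if $x\ge r_2$ then $x$ is an integer strictly larger than $C_2-1$, so $x\ge C_2$; this is exactly the asserted dichotomy.

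The only genuine technical point is this last step. The quadratic produced by the endpoints $p^2+1$ and $p^4-p+1$ is essentially the best one of its type (it maximises the spread of the roots subject to both factors having constant sign on the admissible index set), yet it still overshoots the upper threshold by the negligible amount $g(C_2)=p^7+p^5$; thus the upper alternative of the lemma is reached only after invoking the integrality of $x=|\pi\cap\cV|$. One therefore has to be careful with that integrality argument, and equally careful with the (routine but lengthy) bookkeeping when evaluating $g$ at $C_1$ and at $C_2-1$ to confirm the sign; everything else is immediate from the counting identities above and from Lemmas \ref{maxindex} and \ref{lem31}.
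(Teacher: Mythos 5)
Your proof is correct and follows essentially the same route as the paper: the paper works with $f(x)=\sum_i(p^2+1-i)\bigl(i-(p^4-p+1)\bigr)t_i=-g(x)$, uses the same three counting identities together with Lemma \ref{lem31} to conclude $f(x)\le 0$, and then evaluates the resulting quadratic near the two thresholds. The only (harmless) difference is the choice of evaluation points: you verify $g<0$ at $p^6+2p^4-p^2-p+1$ and at $p^8-p^5+p^4-p$ and $g>0$ at $p^8-p^5+p^4-p+1$, invoking integrality of $x$ at the upper end, whereas the paper checks $f>0$ at $p^6+2p^4-p^2+1$ and $p^8-p^5+p^4-p$ and implicitly uses that $|\pi\cap\cV|\equiv 1\pmod p$ to reach the stated lower threshold.
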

\begin{proof}
Let $B=\pi \cap  \cV$,  $x=|B|$, and let $t_i$ be the number of lines in $\pi$ meeting $B$ in $i$ points. Then, in this case, Equations \eqref{tg1} read

\begin{equation} \label{tg2}
\left\{\begin{array}{l}
    \sum_{i=1}^{p^4+1}t_i=p^8+p^4+1 \\
    \\
          \sum_{i=1}^{p^4+1}it_i=x(p^4+1)\\
     \\
     \sum_{i=1}^{p^4+1}i(i-1)t_i=x(x-1).
   \end{array}  \right.
  \end{equation}

Set   $f(x)=\sum_{i=1}^{p^4+1}(p^2+1-i)(i-(p^4-p+1))t_i$. From \eqref{tg2} we obtain
\[f(x)=-x^2+[(p^4+1)(p^4+p^2-p+1)+1]x-(p^8+p^4+1)(p^2+1)(p^4-p+1). \]
 Because of Lemma \ref{lem31}, we get  $f(x)\leq 0$, while $f(p^6+2p^4-p^2+1)>0$, $f(p^8-p^5+p^4-p)>0$. This finishes the proof of the  lemma.
\end{proof}
\begin{lemma}\label{lem33}
If $\pi$ is a plane of $\PG(r,p^4)$, such that $ |\pi \cap  \cV|\geq p^8-p^5+p^4-p+1$, then  either $\pi$ is entirely contained in $\cV$ or
$ \pi \cap  \cV$ consists of $p^8-p^5+p^4+1$ points and it only contains $i$-secants, with $i\in \{1, p^4-p+1,p^4+1\}$.
\end{lemma}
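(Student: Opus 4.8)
The plan is to analyze the plane section $B = \pi \cap \cV$ via the standard line-type equations \eqref{tg2}, in the regime $|B| \geq p^8-p^5+p^4-p+1$, and to squeeze the type distribution down to exactly the three claimed secant lengths. First I would set $x = |B|$ and write $x = p^8 - y + 1$ with $y \leq p^5 - p^4 + p$ small, by hypothesis. The key structural input is Proposition \ref{charplanes1}: since $|B|$ here is enormous but we will first have to establish $|B| \leq p^6 + 2p^4$ fails, so we cannot apply Proposition \ref{charplanes1} directly — instead I expect the argument to pass to the \emph{complement} of $B$ in $\pi$, or equivalently to work with short lines. Concretely: by Lemma \ref{maxindex} applied inside $\pi$, every line meets $B$ in at most $p^2+1$ points or at least $p^4 - p^2 + 1$ points; by Lemma \ref{lem31} the latter case forces at least $p^4 - p + 1$ points. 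So every line of $\pi$ is either a \emph{short} line ($\leq p^2+1$ points) or a \emph{long} line ($\in \{p^4-p+1, \ldots, p^4+1\}$ points). The target is to show short lines are exactly tangents and long lines have exactly $p^4-p+1$ or $p^4+1$ points, and that $x = p^8 - p^5 + p^4 + 1$ (unless $\pi \subseteq \cV$, i.e.\ $x = p^8 + p^4 + 1$).

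The main tool will be a carefully chosen quadratic $f(x) = \sum_i (i - a)(b - i) t_i$ with roots $a, b$ placed at the two ends of the long-line range, analogous to the proof of Lemma \ref{lem32} but now localized: take $a = p^4 - p + 1$ and $b = p^4 + 1$ (or $b = p^4 + 1$ and $a$ a suitable small value to separately kill the short lines). For the long-line range, every term with $a < i < b$ is nonnegative, and the only genuinely negative contributions come from short lines $i \leq p^2 + 1$, which contribute $(i-a)(b-i) t_i$ with $(i - a) < 0$ and $(b - i) > 0$, hence a negative multiple proportional to roughly $-p^4 p^4 t_i = -p^8 t_i$ for each short non-tangent line — but a tangent ($i = 1$) contributes $(1-a)(b-1) \approx -p^4 \cdot p^4$, also negative, so this particular $f$ does not separate tangents from short non-tangents. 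I would therefore instead use $f(x) = \sum_i (i-1)(i - (p^4-p+1))(p^4 + 1 - i)\,t_i$ or a two-stage argument: first a quadratic forcing $x = p^8 - p^5 + p^4 + 1$ exactly (so the ``defect'' $y = p^5 - p^4$, a clean value), then a second quadratic that vanishes precisely on the set $\{1, p^4 - p + 1, p^4 + 1\}$, expressed through \eqref{tg2} as a nonpositive-looking sum that must therefore be zero term by term, killing all intermediate $t_i$. Evaluating the cubic (or the two quadratics) at the endpoints $x = p^8 - p^5 + p^4 - p + 1$ and $x = p^8 + p^4 + 1$ and checking the sign is the routine part; the case $x = p^8 + p^4 + 1$ is exactly $\pi \subseteq \cV$.

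The hard part will be choosing the polynomial weights so that (i) the resulting combination of the three equations in \eqref{tg2} is manifestly of one sign for $x$ in the hypothesized range, and simultaneously (ii) the weight polynomial is nonnegative (resp.\ nonpositive) on \emph{all} integers $i \in \{1, \ldots, p^4+1\}$, with zeros \emph{only} at $1$, $p^4 - p + 1$, and $p^4 + 1$ — because the gap between $p^2 + 1$ and $p^4 - p + 1$ (the forbidden middle range from Lemma \ref{maxindex}/\ref{lem31} already has $t_i = 0$ there) must be handled consistently, and one must verify no value $2 \leq i \leq p^2 + 1$ survives. I expect one needs the a priori vanishing $t_i = 0$ for $p^2 + 1 < i < p^4 - p + 1$ from the earlier lemmas to make a degree-three weight polynomial feasible: then the weight only needs the correct sign on $\{1, 2, \ldots, p^2+1\} \cup \{p^4-p+1, \ldots, p^4+1\}$, a much smaller constraint. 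Once the polynomial is pinned down, one reads off from ``sum of nonnegative terms equals zero'' that $t_i = 0$ for all $i \notin \{1, p^4-p+1, p^4+1\}$, and then the first equation of \eqref{tg2} together with $x = p^8 - p^5 + p^4 + 1$ finishes the count.
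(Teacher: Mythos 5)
Your reduction to ``short lines ($\le p^2+1$ points) or long lines ($\in\{p^4-p+1,\dots,p^4+1\}$ points)'' via Lemma \ref{maxindex} and Lemma \ref{lem31} is correct, but the engine you propose to finish with cannot run. A cubic weight such as $\sum_i (i-1)\bigl(i-(p^4-p+1)\bigr)\bigl(p^4+1-i\bigr)t_i$ is not computable from \eqref{tg2}: the three standard equations only give $\sum t_i$, $\sum i\,t_i$ and $\sum i(i-1)t_i$ as functions of $x=|B|$, while a cubic combination involves $\sum i(i-1)(i-2)t_i$, i.e.\ the number of collinear triples, which is not determined by $x$. Falling back on quadratics does not help either: to keep the (abundant) long lines from contributing with uncontrolled sign, the quadratic must vanish at both $p^4-p+1$ and $p^4+1$, and then it is strictly negative at $i=1$ as well as at every short size $p+1\le i\le p^2+1$, so it cannot separate tangents (which must survive -- the final configuration has $t_1>0$) from the intermediate short secants you need to kill; moreover the equality case that would pin $x$ to the single value $p^8-p^5+p^4+1$ is never attained, so sign arguments only confine $x$ to an interval, exactly as in Lemma \ref{lem32}. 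In short, the conclusion of this lemma is not a consequence of the size $x$ and the admissible secant lengths alone, so no weighted version of \eqref{tg2} can prove it.

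The paper argues locally instead. Take $P\in S:=\pi\setminus\cV$ (if no such $P$ exists, $\pi\subseteq\cV$). Since every line meets $\cV$ in $1\pmod p$ points (Lemma \ref{Lemma:rette}) and a long line through $P$ misses $P$, such a line has exactly $p^4-p+1$ points of $B$; and since $|S|\le p^5+p$, a count over the pencil at $P$ shows at most one short line passes through $P$. If no point of $S$ lies on a short line, every line meets $S$ in $0$ or $p$ points, so $S$ is a maximal arc, impossible for odd $p$ by Ball--Blokhuis--Mazzocca -- an external input entirely missing from your plan. Otherwise $x=p^8-p^5+p^4+i$, where $i$ is the size of the unique short line through $P$; comparing pencils at different points of $S$ shows all short lines have this same size $i$, so the plane has only $i$-, $(p^4-p+1)$- and $(p^4+1)$-secants, and then the three equations force $t_i=(p^5-i+1)/(p^4-i+1)$, whose integrality gives $i=1$ and $x=p^8-p^5+p^4+1$. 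If you want to salvage your write-up, this pencil-at-an-external-point argument plus the maximal-arc theorem is the missing core; the global polynomial-weight scheme should be dropped.
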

\begin{proof}
Set $S=\pi \setminus \cV$.
Suppose that there exists some point $P \in S$. We have the following two possibilities: either each line of the pencil with center at  $P$ is a
$(p^4-p+1)$-secant or only one line through $P$ is an $i$-secant, with  $1\leq i\leq p^2+1$, whereas the other $p^4$ lines through $P$ are $(p^4-p+1)$-secants. In the former case, when there are no $i$-secants, $1\leq i\leq p^2+1$, each line  $\ell$  in $\pi$ either is disjoint from $S$ or it meets $S$ in $p$ points since  $\ell$ is a $(p^4-p+1)$-secant. This implies that $S$ is a maximal arc and this is impossible for $p\neq 2$ \cite{BB,BBM}.

In the latter case, we observe that the size of $\pi\cap \cV$ must be $p^8-p^5+p^4+i$, where $1\leq i\leq p^2+1$. Next, we denote by $t_s$ the number of $s$-secants in $\pi$, where $s\in \{i,p^4-p+1,p^4+1 \}$. We have that

\begin{equation} \label{tg3}
\left\{\begin{array}{l}
    \sum_{s}t_s=p^8+p^4+1 \\
    \\
          \sum_{s}st_s=(p^4+1)(p^8-p^5+p^4+i)\\
     \\
     \sum_{s}s(s-1)t_s=(p^8-p^5+p^4+i)(p^8-p^5+p^4+i-1).
   \end{array}  \right.
  \end{equation}
  From \eqref{tg3} we get
  \begin{equation} \label{tg3.1}
  t_i=\frac{p(p^4-p-i+1)(p^5-i+1)}{p(p^4-p-i+1)(p^4-i+1)}=\frac{p^5-i+1}{p^4-i+1}
  \end{equation}
and  we can see that
the only possibility for $t_i$ to be  an integer is $ip-p-i+1=0$, that is $i=1$.
For $i=1$, we get $|B|=p^8-p^5+p^4+1$.
\end{proof}

\begin{lemma}\label{lem4}
If $\pi$ is a plane of $\PG(r,p^4)$,  not contained in $\cV$ and  which does not  contain any  $(p^4-p+1)$-secant,  then  $\pi \cap  \cV$ is either a classical  unital or the union of $i$ concurrent lines, with $1\leq i\leq p^2+1$.
\end{lemma}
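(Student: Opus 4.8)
The plan is to argue exactly as in the preceding results for planes with few points of $\cV$, but now to exploit the additional hypothesis that no $(p^4-p+1)$-secant occurs in $\pi$. Set $B = \pi\cap\cV$ and let $c = \sum_i\lambda_i v^{e_i}$ be the codeword of the lines of $\pi$ whose support is $B$, which exists by Corollary~\ref{subgeocodes}. By Lemma~\ref{lem32}, either $|B|\leq p^6+2p^4-p^2-p+1$ or $|B|\geq p^8-p^5+p^4-p+1$. I would dispose of the large case first: if $|B|\geq p^8-p^5+p^4-p+1$, then since $\pi$ is not contained in $\cV$, Lemma~\ref{lem33} forces $B$ to consist of $p^8-p^5+p^4+1$ points with only $i$-secants for $i\in\{1,p^4-p+1,p^4+1\}$; but the hypothesis excludes $(p^4-p+1)$-secants, so every line is a $1$-secant or a $(p^4+1)$-secant, and a standard count (every point of $\pi$ lies on $p^4+1$ lines) then shows the number of full lines is inconsistent with $|B|=p^8-p^5+p^4+1<p^8+p^4+1$ — in fact one gets that the full lines through a point of $B$ already overcount $B$ — a contradiction. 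So in fact the large case does not arise under the hypothesis.

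Hence $|B|\leq p^6+2p^4-p^2-p+1 \leq q^3+2q^2$ with $q=p^2$, so Corollary~\ref{charunital} applies: $B$ is either a classical unital, in which case we are done, or $B$ is a linear combination of at most $q+1 = p^2+1$ lines, each with weight $1$. In the latter case I want to conclude that these lines are concurrent. Suppose three of the lines in the combination, say $e_1,e_2,e_3$, are not concurrent, so they form a triangle with three distinct vertices. Each vertex lies on exactly two of these three lines, and since all coefficients are $1$ and $p>2$, the value of $c$ at each vertex is $2\not\equiv 0\pmod p$; thus each vertex lies in the support $B$. Now look at a line through two of these vertices: it is one of $e_1,e_2,e_3$, say $e_3$ passes through the vertices $e_1\cap e_2$ is not on it, etc. The key point is that each of the lines $e_j$ meets $B$ in $1\pmod p$ points but, being one of the combining lines with coefficient $1$, meets $B$ in at least $q^2+1-(q+1)\cdot 1 = p^4-p^2$ points; more precisely, arguing as in the proof of Proposition~\ref{charplanes1}, a line in the combination with coefficient $1$ meets $B$ in a number of points congruent to $1\pmod p$ that lies in $[p^4-p^2+1,p^4+1]$. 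But by hypothesis no line is a $(p^4-p+1)$-secant, and one should check that the only value in $\{p^4-p^2+1,\dots,p^4+1\}$ that is $\equiv 1\pmod p$ and compatible with the intersection structure forces the lines $e_j$ to be $(p^4+1)$-secants or pairwise concurrent — this is where the no-$(p^4-p+1)$-secant condition does its work.

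More carefully, the clean way is: let $k$ be the number of distinct lines with coefficient $1$ in the combination, $1\leq k\leq p^2+1$, and $k\equiv 1\pmod p$ by Remark~\ref{1modpremark}. If the $k$ lines are concurrent we are in the ``union of $i$ concurrent lines'' case (with $i=k$), and indeed $B$ is then exactly the union of these lines, since a point off all lines has value $0$, a point on exactly one line has value $1$, and a common point (if $k\geq 2$) has value $k\equiv 1\pmod p$ hence lies in $B$ too — so $B$ is literally the point-set union of $i$ concurrent lines. If the $k$ lines are not concurrent, pick two of them, $e_1,e_2$, meeting at a point $Q$; some third line $e_3$ misses $Q$. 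Then on $e_3$, the points $e_3\cap e_1$ and $e_3\cap e_2$ are distinct, and each has $c$-value at least $2$ (at least $2$, equal to $2$ unless further lines pass through), so in particular these points lie in $B$. Consider the $p^4+1$ lines through a point $R\in e_1\cap B$ with $R\neq Q$ and $R\notin e_j$ for $j\geq 3$: counting incidences between $R$ and $B$ via these lines, together with the known total $|B|\leq p^6+2p^4$, forces the existence of a $(p^4-p+1)$-secant through $R$ — contradicting the hypothesis.

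The step I expect to be the genuine obstacle is this last point-counting argument showing non-concurrency produces a forbidden $(p^4-p+1)$-secant. The delicate issue is that the weight-$1$ lines of the combination are long secants ($\approx p^4$ points each), but where they cross each other the multiplicities add and the support can be smaller; one must track how many of the $p^4+1$ lines through a general point $R$ of $B$ are long ($\geq p^4-p^2+1$) versus short ($\leq p^2+1$), and show the residual count of points on the remaining lines is too large to avoid a secant of size exactly $p^4-p+1$ unless everything is concurrent. I would handle this by the same $f(x)$-type double-counting (the three standard equations for the $t_i$ restricted to the lines through $R$, using that all line-intersections with $B$ lie in $\{1,\dots,p^2+1\}\cup\{p^4-p^2+1,\dots,p^4+1\}$ by Lemma~\ref{maxindex} and Lemma~\ref{lem31}), choosing a quadratic that is negative on the ``missing'' interval around $p^4-p+1$ and nonnegative elsewhere, to force a tangent-or-$(p^4-p+1)$-secant count that collides with concurrency failing. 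Everything else — the reduction to $|B|\leq p^6+2p^4-p^2-p+1$, the invocation of Corollary~\ref{charunital}, and the identification of the concurrent case with a union of lines — is routine given the earlier results.
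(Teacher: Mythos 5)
Your skeleton follows the paper's: exclude the large case of Lemma \ref{lem32} via Lemma \ref{lem33} and the no-$(p^4-p+1)$-secant hypothesis, then apply Corollary \ref{charunital} to get either a classical unital or a combination of at most $p^2+1$ weight-one lines, and finally argue those lines are concurrent. But the decisive step --- concurrency --- is precisely the one you leave unproved, and the route you propose for it is not the mechanism that works. The paper first pins down that every line of the combination is a $(p^4+1)$-secant (you gesture at this via the lower bound $\geq p^4-p^2+1$ together with Lemma \ref{lem31}, the hypothesis and the $1\pmod p$ condition, which can indeed be made to work, but you leave it at ``one should check''), and then concurrency follows from a fact you never exploit: since $c=v^{B}$ is a $0$--$1$ vector and all coefficients equal $1$, the number of combining lines through \emph{any} point of $\pi$ is $\equiv 0$ or $1 \pmod{p}$; as the combining lines lie entirely in $B$, a point where two of them cross carries at least $p+1$ of them. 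If the lines were not concurrent, take such a point $P$ and a combining line $\ell_j$ not through $P$: the $\geq p+1$ combining lines through $P$ meet $\ell_j$ in $\geq p+1$ distinct points, each again carrying $\geq p+1$ combining lines, so one counts at least $(p+1)p+1>p^2+1$ lines in the combination, a contradiction. Your alternative plan --- an $f(x)$-type double count through a generic point $R$ intended to force a forbidden $(p^4-p+1)$-secant --- is not carried out, and it is doubtful it can be: failure of concurrency does not manufacture a forbidden secant; what it violates is the bound on the number of lines in the combination.

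Two local slips should also be repaired. First, your claim that a crossing point of two combining lines has ``$c$-value at least $2$, hence lies in $B$'' contradicts $c=v^{B}$ being a $0$--$1$ vector; the correct statement is the mod-$p$ constraint above (exactly two weight-one lines through a point is impossible for $p>2$). Second, in the large case your parenthetical justification is off: counting through a point \emph{of} $B$ gives exactly $p^4-p+1$ full lines through it, which is perfectly consistent with $|B|=p^8-p^5+p^4+1$; the quick contradiction comes from a point \emph{off} $B$, through which (absent $(p^4-p+1)$-secants) all $p^4+1$ lines would be tangents, forcing $|B|=p^4+1$. With these repairs the reduction steps are fine, but as written the proof of the lemma's main assertion is incomplete.
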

\begin{proof}
Because of Lemmas \ref{lem31}, \ref{lem32} and \ref{lem33}, the plane $\pi$  meets $\cV$ in at most $p^6+2p^4-p^2-p+1$ points. Furthermore,  each line of $\pi$ which is not contained in $\cV$ is an $i$-secant, with $1\leq i\leq p^2+1$ (Lemma \ref{lem31} and the sentence preceding Lemma \ref{lem31}). Set $B=\pi \cap \cV$. If in $\pi$ there are no $(p^4+1)$-secants to $B$, then $|B|\leq p^6+p^2+1$ and by Proposition \ref{charplanes2} it follows that  $B$ is a classical unital.

 If there is a $(p^4+1)$-secant to $B$ in $\pi$, then arguing as in the proof of Proposition \ref{charplanes1}, we  get that $B$ is  still a linear combination of $m$ lines, with  $m\leq p^2+1$. Each of these $m$ lines is a  $(p^4+1)$-secant to $\cV$. In fact if one of these lines, say $v$, was an $s$-secant, with $1\leq s\leq p^2+1$, then through each point $P\in v \setminus B$, there would pass at least $p$ lines of the codeword corresponding to $B$ and hence $B$ would be a linear combination of at least $(p^4+1-s)(p-1)+1>p^2+1$ lines, which is impossible.

We are going to prove that these $m$ lines, say $\ell_1,\ldots, \ell_m$,  are concurrent. Assume on the contrary that they are not. We can assume that through a point $P\in \ell_n$, there pass at least $p+1$ lines of our codeword but there is a line $\ell_j$ which does not pass through $P$. Thus through at least $p+1$ points on $\ell_j$, there are at least $p+1$ lines of our codeword and thus  we find at least $(p+1)p+1>m$  lines of $B$, a contradiction.
\end{proof}

\begin{lemma}\label{lem5}
A plane $\pi$ of $\PG(r,p^4)$  meeting $\cV$ in at most $p^6+2p^4-p^2-p+1$ points and  containing a $(p^4-p+1)$-secant  to $\cV$ has at most $(p^2+1)( p^4-p+1)$ points.
\end{lemma}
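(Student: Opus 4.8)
The plan is to reduce $B:=\pi\cap\cV$ to a bounded linear combination of lines and then bound $|B|$ by an inclusion--exclusion that treats separately the lines of the combination lying inside $\cV$. First I record the line spectrum of $B$ in $\pi$: by Lemma~\ref{Lemma:rette} every line of $\pi$ meets $B$ in $1\pmod p$ points, and by Lemma~\ref{maxindex} together with Lemma~\ref{lem31} each line is either \emph{short} (at most $p^2+1$ points of $B$) or \emph{long}, in which case it is a $(p^4-p+1)$-secant (\emph{type $s$}) or a $(p^4+1)$-secant, i.e.\ a line contained in $\cV$ (\emph{type $t$}). Since $|B|\le p^6+2p^4=q^3+2q^2$ with $q=p^2$, and since the given $(p^4-p+1)$-secant has $p^4-p+1\ge q^2-q+1$ points whenever $|B|\ge q^3+1$, Proposition~\ref{charplanes1} applies and gives $v^{B}=v^{\ell_1}+\cdots+v^{\ell_m}$ with $m\le p^2+1$ pairwise distinct lines; by Remark~\ref{1modpremark} also $m\equiv 1\pmod p$.

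Next I identify these lines geometrically. By the weight-one argument inside the proof of Proposition~\ref{charplanes1}, each $\ell_i$ contains at least $p^4+1-(p^2+2)$ points of $B$, hence is long; conversely a line not among the $\ell_i$ meets each $\ell_i$ in one point, so meets $B$ in at most $m\le p^2+1$ points and is short. Thus $\ell_1,\dots,\ell_m$ are exactly the long lines of $\pi$; in particular the given $(p^4-p+1)$-secant is one of them, and each $\ell_i$ has type $s$ or type $t$; let $s\ge1$ and $t=m-s$ count them, so already $|B|\le s(p^4-p+1)+t(p^4+1)$. Now fix the given type-$s$ line $\ell$ and its $p$ holes (points not in $B$). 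Each hole lies on $\equiv0\pmod p$ combining lines, hence on at least $p$ of them, all necessarily of type $s$ (a type-$t$ line lies in $\cV$); moreover the $\ge p-1$ type-$s$ lines through one hole other than $\ell$ are disjoint from those through another hole, since a type-$s$ line through two holes of $\ell$ would equal $\ell$. Hence $s\ge1+p(p-1)=p^2-p+1$, so $m\ge p^2-p+1$; combined with $m\le p^2+1$ and $m\equiv1\pmod p$ this forces $m\in\{p^2-p+1,\ p^2+1\}$, and in either case $t=m-s\le p$.

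If $m=p^2-p+1$, then $s=m$, $t=0$, and the union estimate gives $|B|\le(p^2-p+1)(p^4-p+1)<(p^2+1)(p^4-p+1)$. If $m=p^2+1$ and $t=0$, then $|B|\le s(p^4-p+1)=(p^2+1)(p^4-p+1)$. The remaining case $m=p^2+1$, $1\le t\le p$ is the crux and the plain union bound loses $tp$. Let $G$ be the union of the $t$ type-$t$ lines; then $G\subseteq\cV$ and $|G|\le tp^4+1$, being a union of $t$ lines of $\PG(2,p^4)$. Every point of $B\setminus G$ lies on a type-$s$ combining line, and each type-$s$ line $\ell'$ meets $G$ in at least one point (it meets any type-$t$ line), which point already lies in $B$; hence $\ell'$ contributes at most $(p^4-p+1)-1=p^4-p$ points to $B\setminus G$. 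Therefore
\[
|B|\le |G|+s(p^4-p)\le tp^4+1+(p^2+1-t)(p^4-p)=(p^2+1)(p^4-p+1)+tp-p^2\le (p^2+1)(p^4-p+1),
\]
the last inequality using $t\le p$.

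The main obstacle is precisely this last case: one must see that the naive union bound over the $m=p^2+1$ combining lines overshoots the target by exactly $tp$, and that this slack is recovered by isolating the sub-lines lying inside $\cV$ (their pairwise intersection points are "already counted" in $B$) together with the bound $t\le p$. Establishing $s\ge p^2-p+1$ (equivalently $t\le p$) from the hole-counting on the fixed $(p^4-p+1)$-secant is the key structural input, and verifying the final numerical inequality is then routine.
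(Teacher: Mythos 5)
Your proof is correct, and it rests on the same key input as the paper's (Proposition~\ref{charplanes1} giving $B$ as a sum of at most $p^2+1$ distinct weight-one lines, plus the observation that a combining line through a hole of the $(p^4-p+1)$-secant $\ell$ cannot lie in $\cV$, hence carries at most $p^4-p+1$ points of $\cV$ by the $1\pmod p$ condition). Where you diverge is in the accounting. You first classify all combining lines into $(p^4-p+1)$-secants and full lines, run the hole-count over all $p$ holes of $\ell$ to get $s\ge p^2-p+1$, hence $t\le p$ (using $m\equiv 1\pmod p$ and $m\le p^2+1$), and then do an inclusion--exclusion that isolates the union $G$ of the $t$ full lines, recovering the slack $tp$ exactly because $t\le p$. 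The paper avoids all of this structural bookkeeping: it anchors at a single hole $P$ of $\ell$, takes the (at least) $p$ combining lines through $P$ --- each with at most $p^4-p+1$ points of $\cV$ and pairwise meeting only in $P\notin\cV$, so jointly at most $p(p^4-p+1)$ points --- and then adds the remaining combining lines one at a time, noting that each new line either passes through $P$ (hence itself has at most $p^4-p+1$ points of $\cV$) or meets the $p$ concurrent lines in $p$ points that are already counted or outside $\cV$; either way it contributes at most $p^4-p+1$ new points, giving $(p^2+1)(p^4-p+1)$ directly with no case analysis, no use of $m\equiv 1\pmod p$, and no bound on the number of full lines. Your route is heavier but yields extra structural information (the bound $t\le p$ and the restriction $m\in\{p^2-p+1,\,p^2+1\}$) that the paper's greedy count does not; the paper's argument is the more economical of the two.
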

\begin{proof} Let $\ell$ be a line of $\pi$  which is  a $(p^4-p+1)$-secant to $\cV$. In this case, $\pi \cap \cV$ is a linear combination of at most $p^2+1$ lines, each with weight 1 (Proposition \ref{charplanes1}). In particular $\ell$ is a line of the codeword and hence through each of the missing points of $\ell$ there are at least $p$ lines of the codeword corresponding to $B$. On these $p$ lines we can see at most $p^4-p+1$ points of $\cV$.

So let $\ell_1, \ell_2, \ldots, \ell_p$ be $p$ lines of the codeword through a point of $\ell$. Each of these lines contains at most $p^4-p+1$ points of $\cV$. Thus these $p$ lines contain together at most $ p(p^4-p+1)$ points of $\cV$.  Now take any other line of the codeword, say $e$. If $e$ goes through the common point of the lines $\ell_i$, then there is already one point missing from $e$, so adding $e$ to our set, we can add at most $p^4-p+1$ points. If $e$ does not go through the common point, then it intersects $\ell_i$ in $p$  different points. These points  either do not belong to the set $\pi\cap \cV$ or they belong to the set $\pi\cap \cV$, but we have already counted them when we counted the points of $\ell_i$, so again $e$ can add at most  $p^4-p+1$ points to the set $\pi\cap \cV$.
 Thus adding the lines of the codewords one by one to $\ell_i$ and counting the number of points, each time we add only at most $p^4-p+1$ points to the set $\pi\cap \cV$.
Hence, the plane $\pi$ contains at most $(p^2+1)(p^4-p+1)$ points of $\cV$.
\end{proof}

\begin{lemma}\label{lem51}
Let $\pi$  be a plane of $\PG(r,p^4)$, containing  an $i$-secant, $1< i < p^2 + 1$,  to $\cV$. Then
$\pi \cap \cV$ is either the union of $i$ concurrent lines or it is a linear combination of $p^2+1$ lines (each with weight 1) so that they form a subgeometry of order $p$, minus $p$ concurrent lines.
\end{lemma}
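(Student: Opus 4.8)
The plan is to first force $B:=\pi\cap\cV$ into codeword form, then split according to whether the codeword lines are concurrent. Since $\pi$ carries an $i$-secant with $1<i<p^2+1$, neither alternative of Lemma~\ref{lem33} can occur for $\pi$ (a plane inside $\cV$ has only $(p^4+1)$-secants, and in the other alternative every secant size lies in $\{1,p^4-p+1,p^4+1\}$, whereas $1<i<p^2+1<p^4-p+1$); hence by Lemma~\ref{lem32} we get $|B|\le p^6+2p^4-p^2-p+1$. Corollary~\ref{charunital} then says that $B$ is either a classical unital or a linear combination $B=\sum_{j=1}^m v^{\ell_j}$ of $m\le p^2+1$ distinct lines, each of weight $1$; the unital is excluded because it has only $1$- and $(p^2+1)$-secants, and by Remark~\ref{1modpremark} $m\equiv 1\pmod p$. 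Moreover, if $\pi$ contains no $(p^4-p+1)$-secant, then Lemma~\ref{lem4} already gives that $B$ is a classical unital (excluded) or a union of $i'$ concurrent lines, whose only intermediate secant size is $i'$, forcing $i'=i$ and giving the first alternative of the lemma; so from now on I assume $\pi$ carries a $(p^4-p+1)$-secant, and then Lemma~\ref{lem5} supplies the extra bound $|B|\le(p^2+1)(p^4-p+1)$.

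Next I would show that every $\ell_j$ is either contained in $B$ or a $(p^4-p+1)$-secant. Writing $|\ell_j\cap B|=p^4+1-x$, through each of the $x$ points of $\ell_j\setminus B$ there pass, besides $\ell_j$, at least $p-1$ further codeword lines; such a line meets $\ell_j$ in one point, so it is counted for at most one of these $x$ points, whence $m\ge x(p-1)+1$ and $x\le p+1$. Then $|\ell_j\cap B|\ge p^4-p\ge p^4-p^2+1$, so by Lemma~\ref{maxindex} and Lemma~\ref{lem31} in fact $x\le p$; since $|\ell_j\cap\cV|\equiv 1\pmod p$ we also get $x\equiv 0\pmod p$, hence $x\in\{0,p\}$. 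In particular the $i$-secant $\ell$ is \emph{not} one of the $\ell_j$ (as $i<p^2+1<p^4-p+1$), so, setting $n_P:=\#\{j:P\in\ell_j\}$, we have $\sum_{P\in\ell}n_P=m$ with $n_P\equiv 1\pmod p$ on $\ell\cap B$ and $n_P\equiv 0\pmod p$ on $\ell\setminus B$.

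\textbf{Concurrent case.} If all $\ell_j$ pass through a common point $Q$, then distinct lines through $Q$ meet only at $Q$, and $n_Q=m\equiv 1\pmod p$ forces $Q\in B$; hence no point lies on $\ge p$ of the $\ell_j$, no point of $\bigcup_j\ell_j$ lies outside $B$, every $\ell_j$ is contained in $B$, and $B=\bigcup_j\ell_j$ is a union of $m$ concurrent lines. A line off $Q$ meets $B$ in exactly $m$ points and a line through $Q$ not among the $\ell_j$ is a unisecant, so the only secant sizes besides $p^4+1$ are $1$ and $m$; the $i$-secant therefore forces $i=m$, which is the first alternative.

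\textbf{Non-concurrent case (the hard part).} Assume the $\ell_j$ are not concurrent. Then each point lies on at most $m-1\le p^2$ of them, so each point of $B$ lies on at most $p^2-p+1$ of them, while each point $P\in(\bigcup_j\ell_j)\setminus B$ lies on a number $n_P$ of the $\ell_j$ with $p\le n_P\le p^2$, $n_P\equiv 0\pmod p$, all of which are $(p^4-p+1)$-secants. Combining the identities $\sum_P n_P=m(p^4+1)$ and $\sum_P\binom{n_P}{2}=\binom{m}{2}$ with $|B|\le(p^2+1)(p^4-p+1)$, I expect to force $m=p^2+1$ and then the exact value distribution: exactly $p^2$ points outside $B$, each on exactly $p$ of the $\ell_j$; exactly one $\ell_j$ contained in $B$; the other $p^2$ of the $\ell_j$ being $(p^4-p+1)$-secants; and exactly $p$ further points of $B$ lying on $p+1$ of the $\ell_j$. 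The decisive step is then to recognise from this numerical picture that the $\ell_j$ are precisely the lines of a subgeometry $\PG(2,p)$ of $\pi$ with the $p$ lines through one of its points deleted: one must show that the $p^2+p+1$ distinguished points (the $p^2$ deficient points, the $p$ heavily covered points, and the common point of the deleted pencil), together with the $\ell_j$ completed by the $p$ deleted lines, satisfy the incidence axioms of $\PG(2,p)$, and that no other configuration satisfies all the numerical constraints. This subplane-extraction is where the real difficulty lies — the counting only sets up the parameters, and ruling out spurious configurations with the same parameter counts (for instance deleting a non-pencil of $p$ subplane lines, or a partial pencil) while verifying the full incidence structure is the step I expect to be most delicate.
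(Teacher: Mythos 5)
Your setup reproduces the paper's frame: the bound $|\pi\cap\cV|\le p^6+2p^4-p^2-p+1$ via Lemmas \ref{lem32} and \ref{lem33}, the reduction to a weight-one linear combination of $m\le p^2+1$ lines with $m\equiv 1\pmod{p}$, the disposal of the case without a $(p^4-p+1)$-secant through Lemma \ref{lem4}, the observation that every codeword line is a $(p^4+1)$- or a $(p^4-p+1)$-secant, and the concurrent case are all fine and essentially identical to the paper. The problem is that the non-concurrent case -- which is precisely where the second alternative of the lemma has to be produced -- is not proved: you leave both the determination of $m$ and of the incidence distribution (``I expect to force'') and the identification of the configuration as a subgeometry of order $p$ minus a pencil of $p$ concurrent lines (``the step I expect to be most delicate'') open. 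That is not a finishing touch; it is the entire content of the lemma beyond the routine setup.

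Moreover, the route you propose for the parameter count would fail as stated. The global identities $\sum_P n_P=m(p^4+1)$ and $\sum_P\binom{n_P}{2}=\binom{m}{2}$, together with $|B|\le(p^2+1)(p^4-p+1)$, cannot force $m=p^2+1$: the hypothetical configuration with $m=p^2-p+1$ lines, all $(p^4-p+1)$-secants, pairwise meeting in $p^2-p+1$ missing points each lying on exactly $p$ of the lines, satisfies every one of your numerical constraints ($\sum_P n_P=(p^2-p+1)(p^4+1)$, $\sum_P\binom{n_P}{2}=(p^2-p+1)\binom{p}{2}=\binom{m}{2}$, and $|B|=(p^2-p+1)(p^4-p+1)$, well under your bound). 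The paper eliminates $m=p^2-p+1$ by a geometric step, not a count: such a configuration would be an embedded projective plane of order $p-1$, excluded for $p>3$ -- and this is exactly where the hypothesis $p>3$ enters, which your sketch never uses. Likewise, the paper's path to the subplane-minus-pencil structure is local rather than numerical: it first shows (using non-concurrency) that no point lies on $p+2$ or more codeword lines, since a line avoiding such a point would force at least $(p-1)(p+2)+1>p^2+1$ lines; then that each $(p^4-p+1)$-secant carries exactly one point on $p+1$ codeword lines and $p$ missing points on exactly $p$ of them; then that exactly one codeword line is a $(p^4+1)$-secant; and only then reads off the dual affine plane, i.e.\ the subgeometry of order $p$ minus $p$ concurrent lines. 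So the ``subplane-extraction'' you flag as delicate is the argument itself, and the purely global counting framework you set up is not strong enough to carry it out.
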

\begin{proof}
 By Lemmas \ref{lem32}, \ref{lem33} and \ref{lem4},
 $\pi$  meets $\cV$ in  at most $p^6 +2p^4 -p^2 -p+1$ points and must contain a $(p^4-p+1)$-secant  to $\cV$ or $\pi \cap \cV$ is the union of $i$ concurrent lines. Hence from now on, we assume that $\pi$ contains a  $(p^4 -p+1)$-secant. By Result \ref{codewords}, such a plane is a linear combination of at most $p^2+1$ lines.
As before each line from the linear combination has weight $1$.
Note that the above two statements imply that a line of the linear combination will be either a $(p^4 +1)$-secant or a $(p^4 -p+1)$-secant.

As we have  at most $p^2+ 1$ lines, the line of a $(p^4 -p+1)$-secant must be one of the lines from the linear combination. This also means that through each of the $p$ missing points of this line, there must pass at least $p-1$ other lines from the linear combination. Hence, we already get $(p-1)p +1$ lines.

In the case in which  the linear combination contains exactly $p^2-p+1$ lines, then from each of these lines there are exactly $p$ points missing and through each missing point there are exactly $p$ lines from the linear combination. Hence, the missing points and these lines form a projective plane of order $p-1$, a contradiction as $p>3$.

Therefore, as the number of the lines of the linear combination must be $1$ $\pmod{p}$ and at most $p^2+1$, we can assume that  the linear combination contains  $p^2 +1$ lines.
We are going to prove that through each point of the plane there pass either $0$, $1$, $p$ or $p+1$ lines from the linear combination. From earlier arguments, we know that the number of lines through one point $P$ is $0$ or $1$ $\pmod{p}$. Assume to the contrary that through $P$ there pass at least $p+2$ of such lines. These $p^2+1$ lines forming the linear combination are not concurrent, so there is a line $\ell$ not through $P$. Through each of the intersection points of $\ell$ and a line through $P$, there pass at least $p-1$ more other lines of the linear combination, so in total we get at least $(p-1)(p+2)+1$ lines forming the linear combination, a contradiction.

Since there are $p^2 +1$ lines forming the linear combination and through each point of the plane there pass either $0$, $1$, $p$ or $p+1$ of these lines, we obtain that on a $(p^4-p+1)$-secant there is exactly one point, say $P$,  through which there pass exactly $p+1$ lines from the linear combination and $p$ points, not in the quasi Hermitian variety, through each of which there pass exactly $p$ lines.

If all the $p^2+1$ lines forming the linear combination,  were $(p^4-p+1)$-secants then the number of points through which there pass exactly $p$ lines would be $(p^2+1)p / p$. On the other hand, through $P$ there pass $p+1$  $(p^4-p+1)$-secants, hence we already get $(p+1)p$ such points, a contradiction.
 Thus, there exists a line $m$ of the linear combination that is a $(p^4+1)$-secant. From the above arguments, on this line there are exactly $p$ points through each of which there pass exactly $p+1$ lines, whereas through the rest of the points of the line $m$ there pass no other lines of the linear combination.

 Assume that there is a line $m' \neq m$ of the linear combination that is also a $(p^4+1)$-secant. Then there is a point $Q$ on $m'$ but not on $m$ through which there pass $p+1$ lines. This would mean that there are at least $p+1$ points on $m$, through which there pass more than $2$ lines of the linear combination, a contradiction.

 Hence, there is exactly one line $m$ of the linear combination that is a $(p^4+1)$-secant and all the other lines of the linear combination are $(p^4-p+1)$-secants.
It is easy to check  that the points through which there are more than $2$ lines plus the $(p^4-p+1)$-secants form a dual affine plane.
% There are p^2 points not on l, through which there pass at least 2 lines from the lin. combination. This is so, because there are p^2 (p^4-p+1)-secants, on each we see p such points and each such point was calculated exactly p times.
%Hence there are $p^2+p$  "lines" (in the dual setting) and $p^2$ "points". Through each "point" there are $(p+1)$ "lines", on each "line" there are p "points" (obvious from above).)
Hence our lemma follows.
\end{proof}

\begin{lemma}\label{lem35}
There are no $i$-secants to $\cV$, with $1<i<p^2+1$.
\end{lemma}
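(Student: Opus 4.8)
The plan is to argue by contradiction via a count of the plane sections through a putative $i$-secant. Put $q=p^{2}$, so $\PG(r,p^{4})=\PG(r,q^{2})$ and $|\cV|=(q^{r+1}+(-1)^{r})(q^{r}-(-1)^{r})/(q^{2}-1)$. Suppose $\ell$ is an $i$-secant of $\cV$ with $1<i<q+1=p^{2}+1$. By Lemma~\ref{Lemma:rette} we have $i\equiv 1\pmod p$, so $i=kp+1$ for some $k\in\{1,\dots,p-1\}$, and since $1<i<q^{2}+1$ no plane through $\ell$ lies in $\cV$. Every plane $\pi$ through $\ell$ contains the $i$-secant $\ell$, so Lemma~\ref{lem51} leaves only two possibilities for $\pi\cap\cV$: either a union of $i$ concurrent lines, in which case $|\pi\cap\cV|=iq^{2}+1$, or the ``subgeometry of order $p$ minus $p$ concurrent lines'' configuration of Lemma~\ref{lem51}; in the latter case a direct count inside that configuration (the $p^{2}+1$ weight-one lines are all but $p$ of the lines of a $\PG(2,p)$ through a common point, two of them never meet outside the subplane) gives $|\pi\cap\cV|=q^{3}+q^{2}-pq+1$.

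I would then count $\cV$ through $\ell$. Writing $N:=(q^{2r-2}-1)/(q^{2}-1)$ for the number of planes through $\ell$ and $T$ for the number of them of the first (cone) type, and using that a point of $\cV\setminus\ell$ lies on a unique plane through $\ell$ while a point of $\cV\cap\ell$ lies on all of them,
\[
|\cV|=i+T\,(iq^{2}+1-i)+(N-T)\,(q^{3}+q^{2}-pq+1-i).
\]
The coefficient of $T$ here is $-q\,(q^{2}-q(i-1)-p)\neq 0$ (indeed $q^{2}-q(i-1)-p\ge q-p>0$ since $i\le q$), so $T$ is forced. Substituting the value of $|\cV|$ and $i=kp+1$ and clearing $q^{2}-1=p^{4}-1$, the denominator becomes $p^{3}\,(p^{3}-p^{2}k-1)$ and the numerator $p^{3}\cdot\mathrm{Num}$ with
\[
\mathrm{Num}=p^{4r-6}(p^{3}-p^{2}-k)-p^{3}+kp^{2}+1+(-1)^{r}\,p^{2r-3}(p^{2}-1).
\]
A short congruence check (using $p^{4}\equiv 1$) shows $p^{4}-1\mid\mathrm{Num}$, so $T$ is an integer if and only if $M\mid Q$, where $M:=p^{3}-p^{2}k-1$ and $Q:=\mathrm{Num}/(p^{4}-1)$.

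It remains to show $M\nmid Q$ for all $k\in\{1,\dots,p-1\}$, $p>3$, $r\ge 4$, contradicting the (geometrically forced) integrality of $T$. Reducing $Q$ modulo $M$ — using $p^{3}\equiv p^{2}k+1$ and $\gcd(p,M)=1$ — turns this into showing $M\nmid(p^{2}-1)\bigl(p^{2r-3}(k-1)+(-1)^{r}\bigr)$: for $k=1$ this holds because $0<p^{2}-1<M$; for $k=p-1$ one has $M=p^{2}-1$ and the relation simplifies to $(p^{2}-1)\,T=p^{4r-6}(p-1)-1+(-1)^{r}p^{2r-3}$, whose right side is $\equiv p-2+(-1)^{r}p\not\equiv 0\pmod{p^{2}-1}$; the intermediate cases $2\le k\le p-2$ should follow from the same kind of congruence analysis modulo $M$, exploiting that the exponent $2r-3$ is odd. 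This uniform divisibility/parity step is the part I expect to be the main obstacle. One way to sidestep it when $r=4$ is to run the very same count inside a hyperplane $H\supseteq\ell$: there $|\cV\cap H|$ is one of the two known hyperplane-intersection numbers $q^{5}+q^{3}+q^{2}+1$, $q^{5}+q^{2}+1$ of $\cV$, the number of planes of $H$ through $\ell$ is simply $q^{2}+1$, and the resulting equation is readily seen to have no admissible non-negative integer solution.
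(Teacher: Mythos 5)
There is a genuine gap, and you name it yourself: the whole argument funnels into the claim that $p^{3}-kp^{2}-1$ never divides $(p^{2}-1)\bigl(p^{2r-3}(k-1)+(-1)^{r}\bigr)$ for every $2\le k\le p-2$, every $p>3$ and every $r\ge 4$, and you only verify the two extreme values $k=1$ and $k=p-1$. This is not a routine loose end: the modulus $M=p^{3}-kp^{2}-1$ varies with $k$, may share factors with $p^{2}-1$, and the residue of $p^{2r-3}$ modulo $M$ depends on the multiplicative order of $p$ modulo the prime factors of $M$, so a uniform congruence argument ``exploiting that $2r-3$ is odd'' is far from automatic and is exactly the step that would need a real proof. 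The proposed escape route for $r=4$ (counting inside a hyperplane through $\ell$) is again only asserted (``readily seen''), and in any case does not cover $r\ge 5$, which the theorem requires. So as it stands the central contradiction is not established for the intermediate values of $k$.

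The reason you are forced into this hard arithmetic is that you discard a piece of geometric information that the paper uses to split the problem. In the ``subgeometry of order $p$ minus $p$ concurrent lines'' configuration of Lemma \ref{lem51}, any secant of size strictly between $1$ and $p^{2}+1$ has at least $p^{2}-p+1$ points. Hence if $\ell$ is an $i$-secant with $1<i<p^{2}-p+1$, no plane through $\ell$ can be of the second type at all: every plane through $\ell$ meets $\cV$ in $i$ concurrent lines, so $T=N$ and the count collapses to the single equation $|\cV|=m(ip^{4}+1-i)+i$, which is killed by an easy $p$-power divisibility check ($p^{4(r-1)}$ does not divide the left-hand side). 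Only the single value $i=p^{2}-p+1$ genuinely needs your two-type count, and there the relevant modulus is $p^{3}(p^{2}-1)$ and the contradiction is a clean non-divisibility by $p+1$ -- essentially your $k=p-1$ computation. Incorporating that restriction (rather than allowing both plane types for every $i$) removes the problematic range $2\le k\le p-2$ entirely and turns your outline into the paper's proof; without it, the proof is incomplete.
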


\begin{proof}
By Lemma \ref{lem51},
if a plane $\pi$ contains an $i$-secant, $1 < i < p^2 +1$, then $\pi \cap \cV$ is a linear combination of either  $i$ concurrent lines or lines of an embedded subplane of order $p$ minus $p$ concurrent lines. In the latter case, if $i>1$ then  an $i$-secant is at least a $(p^2-p+1)$-secant.
Hence, if there  is  an $i$-secant with $1<i<p^2-p+1$, say $\ell$, we get that  for each plane $\alpha$  through $\ell$,  $\alpha \cap \cV$ is a linear combination of $i$ concurrent lines.
 Therefore
  \begin{equation}\label{eq17}
|\cV|= m(ip^4+1-i)+i,
\end{equation}
where $m=p^{4(r-2)}+p^{4(r-3)}+ \ldots +p^4+1$ is the number of planes in $\PG(r,p^4)$ through $\ell$.

Setting $r=2\sigma+\epsilon$, where $\epsilon=0 $ or $\epsilon=1$ according to $r$ is even or odd, we can write
\[|\cV|=1+p^4+\ldots+p^{4(r-\sigma-1)}+(p^{4(r-\sigma-\epsilon)}+p^{4((r-\sigma-\epsilon+1)}+\ldots+p^{4(r-1)})p^2\]

Hence, \eqref{eq17} becomes
   \begin{equation}
   \begin{array}{l}\label{eq18}
1+p^4+\ldots+p^{4(r-\sigma-1)}+(p^{4(r-\sigma-\epsilon)}+p^{4((r-\sigma-\epsilon+1)}+\ldots+p^{4(r-1)})p^2\\
-(p^{4(r-2)}+p^{4(r-3)}+ \ldots +p^4+1)=ip^{4(r-1)}
\end{array}
\end{equation}
Since $\sigma \geq 2$, we see that  $p^{4(r-1)}$ does not divide the left hand side of (\ref{eq18}),
  a contradiction.

Thus, there can only  be  $1$-, $(p^2-p+1)$-, $(p^2+1)$-, $(p^4-p+1)$- or $(p^4+1)$-secants to $\cV$.
Now, suppose  that $\ell$ is a $(p^2-p+1)$-secant to $\cV$. Again by Lemma  \ref{lem51},
 each plane through $\ell$  either  has $x=(p^2-p+1)p^4+1$ or  $y=p^2(p^4-p)+p^4+1$ points of $\cV$.
 Next, denote by $t_j$ the number of $j$-secant planes through $\ell$  to  $\cV$.
 We get
 \begin{equation} \label{2c}
\left\{\begin{array}{l}
    t_{x}+t_{y}=m \\
    t_{x}(x-p^2+p-1)+t_{y}(y-p^2+p-1)+p^2-p+1=|\cV|
   \end{array}  \right.
  \end{equation}
Recover the value of $t_y$ from the first
equation and substitute it in the second.
 We obtain
 \[(m-t_y)(p^6-p^5+p^4-p^2+p)+t_y(p^6+p^4-p^3-p^2+p)+p^2-p+1=|\cV|\]
 that is,
\[p^3(p^2-1)t_y=|\cV|-m(p^6-p^5+p^4-p^2+p)-p^2+p-1.\]
It is easy to check that $|\cV|-m(p^6-p^5+p^4-p^2+p)-p^2+p-1$ is not divisible by $p+1$ and hence, $t_y$  turns out not to be  an integer, which is impossible.
 \end{proof}

\begin{lemma} \label{3.20}No plane meeting $\cV$ in at most $p^6+2p^4-p^2-p+1$ points contains a $(p^4-p+1)$-secant.
\end{lemma}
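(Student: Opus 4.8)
I would argue by contradiction: suppose a plane $\pi$ with $|\pi\cap\cV|\le p^6+2p^4-p^2-p+1$ contains a $(p^4-p+1)$-secant $\ell$ to $\cV$. The first step is to pin down $B:=\pi\cap\cV$. As $p^4-p+1\ge p^4-p^2+1$ and $|\pi\cap\cV|\le p^6+2p^4$, Proposition~\ref{charplanes1} gives $B=\sum_{j=1}^{m}v^{e_j}$ with $m\le p^2+1$ lines $e_j$ of $\pi$, each of weight $1$, and by Remark~\ref{1modpremark} one has $m\equiv 1\pmod p$. Then $\ell$ must occur among the $e_j$, since otherwise $|\ell\cap\cV|\le m\le p^2+1<p^4-p+1$; say $\ell=e_1$. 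Each of the $p$ points of $\ell\setminus\cV$ is a zero of the codeword and hence lies on at least $p$ of the $e_j$, and since the $e_j\ne\ell$ meet $\ell$ in pairwise distinct points this gives $m\ge 1+p(p-1)=p^2-p+1$; thus $m\in\{p^2-p+1,\,p^2+1\}$. Finally, Lemma~\ref{lem35} says that every line of $\PG(r,p^4)$ is a $1$-, $(p^2+1)$-, $(p^4-p+1)$- or $(p^4+1)$-secant, and if some $e_j$ were a $1$- or $(p^2+1)$-secant it would carry at least $p^4-p^2$ points off $\cV$, each forcing $p-1$ further distinct $e_k$ and so $m>p^2+1$; hence every $e_j$ is a $(p^4-p+1)$- or $(p^4+1)$-secant.

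For the case $m=p^2-p+1$ the plan is a short moment computation. Let $v_R$ denote the number of $e_j$ through a point $R$, so $v_R\equiv 0$ or $1\pmod p$ according as $R\notin\cV$ or $R\in\cV$. One first checks, using the distinct-intersection-points argument above, that no point of $\cV$ lies on two of the $e_j$; then $\sum_R v_R=m(p^4+1)$, $\sum_R\binom{v_R}{2}=\binom m2$, and the fact that every $e_j$ is a $(p^4-p+1)$-secant (so has exactly $p$ points off $\cV$) together force every multiple point to have $v_R=p$ and there to be exactly $p^2-p+1$ of them — so that the multiple points and the $e_j$ form a projective plane of order $p-1$. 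A counting argument now produces a line $g$ of $\pi$ missing all $p^2-p+1$ multiple points (these lie on at most $(p^2-p+1)(p^4+1)<p^8+p^4+1$ lines). Such a $g$ is not one of the $e_j$, meets each $e_j$ in a distinct point with $v=1$, and therefore $|g\cap\cV|=m=p^2-p+1$ — a length excluded by Lemma~\ref{lem35} since $1<p^2-p+1<p^2+1$. This is the desired contradiction.

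For the case $m=p^2+1$ the plan is to reproduce the rigidity analysis in the proof of Lemma~\ref{lem51}: tracking, for every point of $\pi$, the number of $e_j$ through it modulo $p$ and excluding the intermediate multiplicities shows that the $e_j$ are precisely the extensions to $\pi$ of the lines of a $\PG(2,p)$-subgeometry $\sigma$ of $\pi$ from which a pencil of $p$ concurrent lines has been removed, exactly one $e_j$ (the extension of the surviving pencil line $m_0$) being a $(p^4+1)$-secant and the remaining $p^2$ being $(p^4-p+1)$-secants. I would then take a point $R_0\in m_0\cap\sigma$ distinct from the pencil vertex $V$ and a line $g$ through $R_0$ that is not the extension of a line of $\sigma$; evaluating the codeword along $g$ — value $p+1$ at $R_0$, value $0$ or $1$ at every other point of $g$ (since each non-$\sigma$ point lies on at most one line of $\sigma$) — yields $|g\cap\cV|=1+(p^2-p)=p^2-p+1$, once more contradicting Lemma~\ref{lem35}. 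With both values of $m$ ruled out, the lemma follows.

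The step I expect to be the main obstacle is the rigidity argument in the case $m=p^2+1$: forcing the $p^2+1$ lines $e_j$ into the subgeometry-minus-pencil configuration is exactly the delicate point-by-point bookkeeping of multiplicities modulo $p$ carried out in the proof of Lemma~\ref{lem51}, and this is where the hypothesis $p>3$ is genuinely used. The moment computation for $m=p^2-p+1$ is in the same spirit but lighter, and the initial reductions together with the final production of a $(p^2-p+1)$-secant are routine given the earlier lemmas.
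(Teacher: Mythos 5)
Your proposal is correct in substance, but it follows a genuinely different and much heavier route than the paper. Both arguments start from Proposition~\ref{charplanes1} (the plane section is a weight-one combination of at most $p^2+1$ lines, $1\pmod p$ many of them) and both end by exhibiting a line whose intersection size with $\cV$ is forbidden by Lemma~\ref{lem35}. The paper, however, needs no case distinction and no classification of the configuration: it takes two missing points $P,Q$ of the $(p^4-p+1)$-secant, two codeword lines $f,g$ through $Q$, and a non-codeword line $m$ through $P$ meeting $f$ and $g$ in points of $\cV$; since $m$ meets at least $p$ codeword lines in $P$, at most $p^2+1-p$ codeword lines can contribute further points, so $p+1\le |m\cap\cV|\le p^2-p+1$, contradicting Lemma~\ref{lem35} in two lines. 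You instead pin down the whole structure: first $m\in\{p^2-p+1,\,p^2+1\}$, then for $m=p^2-p+1$ a moment count forcing a projective plane of order $p-1$ on the multiple points, and for $m=p^2+1$ the subgeometry-minus-pencil rigidity, which is legitimate but amounts to re-running the analysis the paper already carries out inside the proof of Lemma~\ref{lem51} (that analysis indeed transfers, since from the point where a $(p^4-p+1)$-secant is assumed it only uses the size bound and the secant, i.e.\ exactly the hypotheses here); quoting Lemma~\ref{lem51} itself is not possible because its hypothesis is an $i$-secant with $1<i<p^2+1$. What your route buys is structural information and, pleasantly, a uniform punchline (a $(p^2-p+1)$-secant in both cases) that even replaces the subplane-of-order-$(p-1)$ nonexistence appeal by a direct contradiction with Lemma~\ref{lem35}; what it costs is length and some bookkeeping you only sketch. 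Two small points to fix if you write it up: the phrase ``the $e_j\ne\ell$ meet $\ell$ in pairwise distinct points'' is not literally true (several $e_j$ do pass through the same missing point); the correct statement is that each $e_j\ne\ell$ meets $\ell$ in exactly one point, so the sets of extra codeword lines attached to distinct missing points of $\ell$ are disjoint, giving $m\ge 1+p(p-1)$. Likewise, ``no point of $\cV$ lies on two of the $e_j$'' in the case $m=p^2-p+1$ does hold, but only via the equality analysis (every $e_j\ne\ell$ passes through a missing point of $\ell$, while a point of $\ell\cap\cV$ cannot, and a point of $\cV$ off $\ell$ can lie on at most $p$ codeword lines), so spell this out rather than asserting it.
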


\begin{proof} Let $\pi$ be a plane of $\PG(r,p^4)$ such that $|\pi \cap \cV|\leq p^6+2p^4-p^2-p+1$. It can contain only $1$-, $(p^2+1)$-, $(p^4-p+1)$-, $(p^4+1)$-secants (Lemma \ref{lem31} and Lemma \ref{lem35}).
If $\pi\cap \cV$ contains a $(p^4-p+1)$-secant, we know from Proposition \ref{charplanes1} that it is
a linear combination of at most $p^2+1$ lines, each with weight 1. Suppose that  $e$  is a $(p^4-p+1)$-secant to $\pi \cap \cV$.
Let $P$ and $Q$ be two missing points of $e$. We know that there must be at least $p-1$ other lines of the codeword through $P$ and $Q$. Let $f$ and $g$ be two such lines through $Q$. We can find a line, say $m$, of the plane through $P$, that intersects $f$ and $g$ in a point of $\cV$ and that is not a line of the codeword. Then $|m \cap \cV| \geq 1+p$ since $|m \cap \cV| \equiv 1 \pmod{p}$.
 Thus $m$ contains at least two points of $\cV$, but in $P$ it meets at least $p$ lines of the codeword. Hence,  $p+1\leq |m\cap \cV|\leq p^2-p+1$, and this contradicts Lemma \ref{lem35}.
\end{proof}

\begin{lemma}\label{lem34}
There are no $(p^4-p+1)$-secants to $\cV$.
\end{lemma}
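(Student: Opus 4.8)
The plan is to suppose, for a contradiction, that $\ell$ is a $(p^4-p+1)$-secant to $\cV$, pin down the intersection size of \emph{every} plane of $\PG(r,p^4)$ through $\ell$, and then double-count the points of $\cV$ to clash with the size of a non-singular Hermitian variety.

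First I would show that every plane $\pi$ through $\ell$ satisfies $|\pi\cap\cV| = p^8-p^5+p^4+1$. Since $\pi$ contains the $(p^4-p+1)$-secant $\ell$, Lemma \ref{3.20} rules out $|\pi\cap\cV|\le p^6+2p^4-p^2-p+1$, so by Lemma \ref{lem32} we have $|\pi\cap\cV|\ge p^8-p^5+p^4-p+1$. Now Lemma \ref{lem33} leaves only two alternatives: either $\pi\subseteq\cV$, or $|\pi\cap\cV|=p^8-p^5+p^4+1$. The first is impossible, because $\pi\subseteq\cV$ would force $\ell\subseteq\cV$, i.e.\ $|\ell\cap\cV|=p^4+1\ne p^4-p+1$. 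Hence the second alternative holds for each of the $m=1+p^4+\cdots+p^{4(r-2)}=\frac{p^{4(r-1)}-1}{p^4-1}$ planes of $\PG(r,p^4)$ through $\ell$.

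Next I would carry out the count. Each point of $\cV\setminus\ell$ lies in a unique plane $\langle\ell,P\rangle$ through $\ell$, so summing $|\pi\cap\cV|$ over all planes $\pi$ through $\ell$ gives $|\cV\setminus\ell|+m\,|\ell\cap\cV|$, whence
\[
|\cV|=m\big(p^8-p^5+p^4+1\big)-m\big(p^4-p+1\big)+\big(p^4-p+1\big)=m\big(p^8-p^5+p\big)+p^4-p+1.
\]
Since $m\ge p^{4(r-2)}$, this forces $|\cV|>p^{4r-1}$. On the other hand, $\cV$ has the size of a non-singular Hermitian variety of $\PG(r,p^4)$, namely $|\cV|=\frac{(p^{2r+2}+(-1)^r)(p^{2r}-(-1)^r)}{p^4-1}\le\frac{2p^{4r+2}}{p^4-1}<4p^{4r-2}$. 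Because $p>3$ is prime we have $p\ge 5$, so $4p^{4r-2}\le p^{4r-1}$, and therefore $|\cV|<4p^{4r-2}\le p^{4r-1}<|\cV|$, a contradiction. Hence there are no $(p^4-p+1)$-secants to $\cV$.

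The only delicate part is the opening step, where the combination of Lemmas \ref{3.20}, \ref{lem32}, and \ref{lem33} collapses the whole spectrum of possible plane-intersection sizes through $\ell$ to the single value $p^8-p^5+p^4+1$; once that is in place the contradiction is forced purely by orders of magnitude (a count of size $\sim p^{4r}$ against the true size $\sim p^{4r-2}$), so no fine arithmetic is required — although one could equally well reach the contradiction by a divisibility argument on the displayed identity, in the style of the proof of Lemma \ref{lem35}.
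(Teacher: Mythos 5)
Your proposal is correct and follows essentially the same route as the paper: the paper also combines Lemma \ref{3.20} (with Lemmas \ref{lem32} and \ref{lem33}) to force every plane through a putative $(p^4-p+1)$-secant to contain $p^8-p^5+p^4+1$ points of $\cV$, and then derives the bound $|\cV|\geq m(p^8-p^5+p)+p^4-p+1$, which contradicts the actual size of $\cV$. Your version merely makes explicit the order-of-magnitude comparison that the paper leaves implicit.
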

\begin{proof} If there was a $(p^4-p+1)$-secant to $\cV$, say $\ell$, then, by Lemma \ref{3.20}, all the planes through $\ell$  would contain at least $p^8-p^5+p^4+1$ points of $\cV$, and thus
\begin{equation}\label{alsin}|\cV|\geq (p^{4(r-2)}+p^{4(r-3)}+ \cdots +p^4+1)(p^8-p^5+p)+p^4-p+1,
\end{equation} a contradiction.
\end{proof}

\noindent {\bf Proof of Theorem \ref{mainth} (case $r \geq 4$ and $q = p^2$): }
Consider a line $\ell$ which is not contained in $\cV$. From the preceding lemmas we have that $\ell$ is either a 1-secant or a $(p^2+1)$-secant of $\cV$. Furthermore, $\cV$ has  no plane section of  size $(p^2+1)(p^4+1)$.   Finally, as in the case $r=3$,  it is  easy to see that $\cV$ has no singular points, thus, by Theorem \ref{main2}, $\cV$ turns out to be a Hermitian variety of $\PG(r,p^4)$.

\section{Singular quasi-Hermitian varieties}\label{sqHv}
In this section, we consider sets having the same behavior with respect to hyperplanes as singular Hermitian varieties.
\begin{definition}
A {\em $d$-singular quasi-Hermitian variety} is a subset of points of $\PG(r,q^2)$ having the same number of points and the same intersection sizes with hyperplanes as a singular Hermitian variety with a singular space of dimension $d$.
\end{definition}

We prove the following result.
\begin{theorem}
Let $\mathcal{S}$ be a $d$-singular quasi-Hermitian variety in $\PG(r,q^2)$.
Suppose that either
\begin{itemize}
\item  $r=3$, $d=0$, $q=p^{h}> 4$, $h\geq 1$, or
\item $r\geq 4$, $d\leq r-3$, $q=p>4$, or
\item  $r\geq 4$, $d\leq r-3$, $q=p^2$, $p>3$.
\end{itemize}
Then $\mathcal{S}$ is a singular Hermitian variety with a singular space of dimension $d$ if and only if its incidence vector  is in the $\mathbb{F}_p$-code spanned by the hyperplanes of $\PG(r,q^2)$.
\end{theorem}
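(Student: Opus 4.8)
The plan is to bootstrap from Theorem~\ref{mainth} and from the classification of singular $k_{n,r,q^2}$ sets (Theorems~\ref{th:singularcase1} and~\ref{th:singularcase}). One implication is immediate: a singular Hermitian variety is a cone, with a subspace as vertex, over a non-singular Hermitian variety, equivalently the zero locus of a degenerate Hermitian form, and its incidence vector lies in $\mathcal C_p$ by \cite[Theorem~1]{JK}. So assume from now on that $v^{\mathcal S}\in\mathcal C_p$; we must show that $\mathcal S$ is a singular Hermitian variety with a $d$-dimensional singular space.

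First I would record that, because $d\le r-3$, both $|\mathcal S|$ and every hyperplane intersection number of a singular Hermitian variety with a $d$-dimensional vertex are $\equiv 1\pmod p$ and have the same order of magnitude as the corresponding quantities for $\mathcal H(r,q^2)$. The proofs of Lemmas~\ref{Lemma:rette}, \ref{CDS}, \ref{lemma1}, \ref{maxindex}, of Propositions~\ref{charplanes1} and~\ref{charplanes2}, of Remark~\ref{1modpremark} and of Corollaries~\ref{subgeocodes}, \ref{charunital}, \ref{charunitalbis} use nothing about $\mathcal V$ beyond these two facts, so they carry over to $\mathcal S$, together with the case analysis of Section~\ref{qHv} that controls the plane sections. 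One obtains that every line meets $\mathcal S$ in $1$, $q+1$ or $q^2+1$ points and that every plane section of $\mathcal S$ of bounded size is a classical unital or a linear combination of at most $q+1$ weight-one lines. A plane section of the latter kind with concurrent lines would produce a plane section of $\mathcal S$ of a cardinality that does not occur on $\mathcal S$, so $\mathcal S$ carries a $(q+1)$-secant and hence is a $k_{q+1,r,q^2}$. Moreover $\mathcal S$ must be singular: otherwise it would be a non-singular $k_{q+1,r,q^2}$ on which every point lies on a $(q+1)$-secant, hence a non-singular Hermitian variety by Theorems~\ref{main1} and~\ref{main2} exactly as in the proof of Theorem~\ref{mainth}, contradicting $|\mathcal S|\ne|\mathcal H(r,q^2)|$.

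I would then feed $\mathcal S$ into the structure theorems. If $r=3$, Theorem~\ref{th:singularcase1} leaves $\mathcal S$ to be a pencil of $q+1$ planes or a cone with a point vertex over a plane section of type~I--IV; the pencil and the base types~II, III, IV are excluded by comparing cardinality with the prescribed $|\mathcal S|$, while in the type~I case the base, being a unital and a codeword of a plane by Corollary~\ref{subgeocodes}, is classical by \cite{BBW}. If $r\ge4$, Theorem~\ref{th:singularcase} exhibits the singular points of $\mathcal S$ as a subspace $\Pi_e$; the cases $e=r-1$ (a hyperplane) and $e=r-2$ (a pencil of $q+1$ hyperplanes) are excluded by cardinality, so $e\le r-3$ and $\mathcal S=\Pi_e\,\mathcal K'$ is a cone whose base $\mathcal K'$ is a non-singular $k_{q+1,r-e-1,q^2}$ in a complement $\Sigma\cong\PG(r-e-1,q^2)$ of $\Pi_e$. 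Every point of $\mathcal K'$ lies on a $(q+1)$-secant of $\mathcal K'$ (non-singularity plus the line intersection numbers), and $v^{\mathcal K'}$ lies in the code of $\Sigma$ by Corollary~\ref{subgeocodes}. Hence $\mathcal K'$ is a non-singular Hermitian variety: by Theorem~\ref{main1} if $r-e-1=3$; by Theorem~\ref{main2}, with regularity checked through Corollary~\ref{charunital} as in the proof of Theorem~\ref{mainth}, if $r-e-1\ge4$; and, using the bound $|\mathcal K'|\le q^3+q^2+q+1$ valid for a non-singular $k_{q+1,2,q^2}$, together with Corollary~\ref{charunital} and \cite{BBW}, if $r-e-1=2$. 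Therefore $\mathcal S=\Pi_e\,\mathcal H(r-e-1,q^2)$ is a singular Hermitian variety, and comparing the cardinalities $|\Pi_e\,\mathcal H(r-e-1,q^2)|$, $0\le e\le r-3$, with the prescribed $|\mathcal S|$ forces $e=d$, completing the proof.

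The step I expect to be the main obstacle is the second paragraph, namely verifying that the planar apparatus of Section~\ref{qHv} really survives the replacement of $|\mathcal V|$ and of the hyperplane intersection numbers by their singular counterparts: in particular that Lemma~\ref{lemma1} still provides, through every line, a plane carrying at most $q^3+q^2+q+1$ points of $\mathcal S$, that the hypothesis $|\pi\cap\mathcal S|\le q^3+2q^2$ of Corollary~\ref{charunital} is available whenever it is invoked, and that no line intersection number can be pinned to an intermediate value. The remaining work --- discarding the degenerate outcomes of the structure theorems and matching the vertex dimension to $d$ --- amounts to explicit but routine cardinality comparisons between the various cones and the prescribed $|\mathcal S|$.
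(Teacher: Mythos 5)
Your overall route coincides with the paper's: carry the Section~\ref{qHv} machinery over to $\mathcal S$, deduce that $\mathcal S$ is a singular (and regular) $k_{q+1,r,q^2}$, invoke Theorems~\ref{th:singularcase1} and~\ref{th:singularcase}, discard the degenerate outcomes by cardinality, and identify the base of the cone as a classical Hermitian variety through its membership in the code of the base space (the paper does this last step via \cite{BBW} and Theorem~\ref{mainth}, you via Theorems~\ref{main1}, \ref{main2} and \cite{BBW} directly; that difference is inessential). One step, however, fails as stated: the claim that a plane section consisting of concurrent weight-one lines ``would produce a plane section of $\mathcal S$ of a cardinality that does not occur on $\mathcal S$''. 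A $d$-singular quasi-Hermitian variety has prescribed intersection numbers only with hyperplanes, so for $r\geq 4$ there is no a priori list of admissible plane sizes to contradict; and on the model variety itself such sections genuinely occur --- for $r=3$, $d=0$ every plane through the vertex meeting the base unital in a $(q+1)$-secant cuts out exactly $q+1$ concurrent lines, of size $q^3+q^2+1$, which is one of the three admissible plane sizes. So these sections cannot be excluded, and they need not be: a $(q+1)$-secant exists simply because a set meeting every line in $1$ or $q^2+1$ points is closed under secant lines, hence a subspace, contradicting $|\mathcal S|$; and the regularity requirement (no plane section of size $(q+1)(q^2+1)$) is obtained, as in the paper, from Proposition~\ref{concurrent} for $q=p$ and from the lemmas of Subsection~\ref{alsin0} for $q=p^2$.

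The second issue is that the step you yourself flag as ``the main obstacle'' is precisely where the paper does its actual work, and an appeal to ``the same order of magnitude'' does not discharge it. The singular analogue of Lemma~\ref{lemma1} must be re-proved with the new value $|\mathcal S|=q^{2(d+1)}\,|\mathcal H(r-d-1,q^2)|+(q^{2d}+\cdots+q^2+1)$ --- this is exactly where the hypothesis $d\le r-3$ enters, and the paper carries out the counting inequality (yielding $x>q^2+1$, a contradiction) to guarantee, through every line, a plane with at most $q^3+q^2+q+1$ points of $\mathcal S$. Likewise, before reusing the Subsection~\ref{alsin0} lemmas for $q=p^2$ one must recheck the divisibility arguments \eqref{eq17} and \eqref{alsin} with the singular cardinality. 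These verifications do succeed, but your proposal asserts rather than performs them, so as written the second paragraph is an outline of the paper's argument rather than a proof of it.
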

\begin{proof}
Let $\mathcal{S}$ be  a singular Hermitian variety of $\PG(r,q^2)$. The  characteristic vector $v^{\cS}$ of $\cS$  is in $ \mathcal C_p$ since
   \cite[Theorem 1]{JK} also holds for singular Hermitian varieties.
 Now assume that $\cS$ is a $d$-singular quasi-Hermitian variety. As in the non-singular case, by Lemma \ref{Lemma:rette}, each line of $\PG(r,q^2)$ intersects $\mathcal{S}$ in $1\pmod{p}$ points.

\subsection{Case $r=3$}

Suppose that $r=3$ and therefore $d=0$. Let $\pi$   be a plane  of $\PG(3,q^2)$.
In this case  $\pi$   meets $\mathcal{S}$ in either $q^2+1$, or $q^3+1$ or  $q^3+q^2+1$ points. Therefore Lemma \ref{lemma1},
Lemma   \ref{maxindex},
Proposition \ref{charplanes1},  and Corollary \ref{charunital}, are still valid in the singular case for $r=3$.

Thus, if $|\pi \cap \mathcal{S}|=q^2+1$, then Proposition \ref{charplanes1} implies that $\pi \cap \mathcal{S}$ is a line of $\pi$,  whereas if $|\pi \cap \mathcal{S}|=q^3+1$, then Corollary \ref{charunital} gives that $\pi \cap \mathcal{S}$  is a classical unital of $\pi$.
Now suppose that $|\pi \cap \mathcal{S}|=q^3+q^2+1$. Let $\ell$ be a line of $\pi$ such that $|\ell \cap \mathcal{S}|=s$ with $s\neq 1, q+1,q^2+1$.

 Each plane through $\ell$ must meet $\mathcal{S}$ in
$ q^3+q^2+1$ points and this gives
\[(q^3+q^2+1-s)(q^2+1)+s=q^5+q^2+1\]
that is, $ s=q^2+q+1$ which is impossible.

Thus each line of $\PG(3,q^2)$ intersects $\mathcal{S}$
in  either $1$ or, $q+1$ or, $q^2+1$ points and hence $\mathcal{S}$ is a $k_{q+1,3,q^2}$.
 Also, $\mathcal{S}$ cannot be non-singular by assumption, hence  Theorem \ref{th:singularcase1} applies and $\cS$ turns out to be a cone $\Pi_0\mathcal{S}^{\prime}$ with $\mathcal{S}^{\prime}$ of type {I}, {II}, {III} or {IV } as   the possible intersection sizes with planes are $q^2+1,q^3+1,q^3+q^2+1$.

 Possibilities {II}, {III}, and {IV} must be excluded, since their sizes cannot be possible. This implies that $ \mathcal{S}=\Pi_0\mathcal{H}$, where $\mathcal{H}$ is a non-singular Hermitian curve.

\subsection{Case $r\geq 4$}
 Let  $\ell$ be a line of $\PG(r,q^2)$  containing $x<q^2+1$ points of $\mathcal{S}$. We are going to prove that there exists at least one plane through $\ell$  containing less than $q^3+q^2+q+1$ points of $\mathcal{S}$.
If we suppose that all the planes through $\ell$ contain at least $q^3+q^2+q+1$ points of $\mathcal{S}$, then
$$
q^{2(d+1)}\frac{(q^{r-d}+(-1)^{r-d-1})(q^{r-d-1}-(-1)^{r-d-1})}{q^2-1} + q^{2d}+q^{2(d-1)}+\cdots+q^2+1 \geq$$
$$ m (q^3+q^2+q+1-x)+x,
$$
where $m=q^{2(r-2)}+q^{2(r-3)}+\cdots+q^2+1$  is the number of planes through $\ell$ in $\PG(r,q^2)$. We obtain  $x>q^2+1$, a contradiction.

Therefore, there exists at least one plane through $\ell$ having less than $q^3+q^2+q+1$ points of $\mathcal{S}$ and hence Lemma \ref{lemma1}, Lemma   \ref{maxindex},
Proposition \ref{charplanes1},  and Corollary \ref{charunital}, are still valid in this singular case for any $q>4$.

Next, we are going to prove that $\cS$ is a $k_{q+1,r,q^2}$, with $q=p^{h}>4$, $h=1,2$.
\noindent{\bf case $q = p$: }
Let $\ell$ be a line of $\PG(r,p^2)$. As we have seen there is a plane $\pi$ through $\ell$ such that $|\pi \cap \cV|\leq p^3+p^2+p+1$.   Proposition \ref{concurrent} is still valid in this case and thus we have that $\ell$ is either a unisecant or a $(p+1)$-secant of $\cS$. Furthermore, we also have that $\cS$ has  no plane section of  size $(p+1)(p^2+1)$ and hence $\cS$ is a regular $k_{p+1,r,p^2}$.

\noindent{\bf case $q = p^2$: }
We first observe that \eqref{eq17} and \eqref{alsin} hold true  in the case in which $\cV$ is assumed to be a singular quasi-Hermitian variety. This implies that all lemmas stated in the subparagraph \ref{alsin0}  are valid in our case. Thus,
 we obtain that $\mathcal{S}$ is a $k_{p^2+1,r,p^4}$ and
  it is straightforward to check that $\mathcal{S}$ is also  regular.

 Finally,  in both cases $q=p$ or $q=p^2$,  we have that $\cS$ is  a singular $k_{q+1,r,q^2}$ because  if $\mathcal{S}$ were a non-singular $k_{q+1,r,q^2}$ then, from Theorem \ref{main2}, $\mathcal{S}$ would be  a non-singular Hermitian variety and this is not possible by our assumptions.

 Therefore, by Theorem \ref{th:singularcase}, the only possibility is that $\mathcal{S}$ is a cone $\Pi_d \mathcal{S}^\prime$, with $\mathcal{S}^{\prime}$ a non-singular $k_{q+1,r-d-1,q^2}$. By Lemma \ref{CDS}, $\mathcal{S}^{\prime}$ belongs to the code of points and hyperplanes of $\PG(r-d-1,q^2)$. Since $r-d-1\geq 2$, then, by \cite{BBW} and Theorem \ref{mainth}, $\mathcal{S}^{\prime}$ is a non-singular Hermitian variety and, therefore, $\mathcal{S}$ is a singular Hermitian variety with a vertex of dimension $d$.
\end{proof}

{\bf Acknowledgement.} The research of the second author  was partially supported by Ministry for Education, University and Research of Italy (MIUR) (Project PRIN 2012 ``Geometrie di Galois e strutture di incidenza'' - Prot. N. 2012XZE22K\_005) and by the Italian National Group for Algebraic and Geometric Structures and their Applications (GNSAGA - INdAM). The third and fourth author acknowledge the financial support of the Fund for Scientific Research - Flanders (FWO) and the Hungarian Academy of Sciences (HAS) project: Substructures of projective spaces (VS.073.16N).

\end{document}